\definecolor{deepblue}{rgb}{0,0,0.5}
\definecolor{deepred}{rgb}{0.6,0,0}
\definecolor{deepgreen}{rgb}{0,0.5,0}
	\definecolor{DarkBlue}{rgb}{0.00,0.00,0.55}
	\definecolor{Black}{rgb}{0.00,0.00,0.00}
\newtheorem{theorem}{Theorem}[section]
\newtheorem{lemma}[theorem]{Lemma}
\theoremstyle{definition}
\newtheorem{definition}{Definition}[section]
\theoremstyle{remark}
\newtheorem{remark}{Remark}[section]
\newcommand{\TheTitle}{Effects of round-to-nearest and stochastic rounding in the numerical solution of the heat equation in low precision}
\newcommand{\TheAuthors}{M.~Croci, M.~B.~Giles}
\title{{\TheTitle}\thanks{\textbf{Funding:} This research is supported by the ICONIC
EPSRC Programme Grant (EP/P020720/1).}}
\author{
  M. Croci\thanks{Oden Institute, University of Texas at Austin, Austin, TX, USA, (\textbf{\url{matteo.croci@austin.utexas.edu}}).}
  and
  M.~B.~Giles\thanks{Mathematical Institute, University of Oxford, Oxford, UK, (\textbf{\url{mike.giles@maths.ox.ac.uk}}).}
}
\DeclareMathOperator{\E}{\mathbb{E}}
\DeclareMathOperator{\bi}{\textbf{\textit{i}}}
\DeclareMathOperator{\EPS}{\mathlarger{\mathlarger{\mathlarger{\varepsilon}}}}
\DeclareMathOperator{\eps}{\bm{\varepsilon}}
\DeclareMathOperator{\F}{\mathcal{F}}
\DeclareMathOperator{\fl}{\text{fl}}
\DeclareMathOperator{\sr}{\text{sr}}
\DeclareMathOperator{\uu}{\mathfrak{u}}
\DeclareMathOperator{\ups}{\nu}
\renewcommand{\P}{\mathbb{P}}
\definecolor{myblue}{RGB}{135, 206, 250}
\newcommand{\R}{\mathbb{R}}
\newcommand{\N}{\mathbb{N}}
\begin{document}

\maketitle

\begin{abstract}
  Motivated by the advent of machine learning, the last few years have seen the return of hardware-supported low-precision computing. Computations with fewer digits are faster and more memory and energy efficient, but can be extremely susceptible to rounding errors. As shown by recent studies into reduced-precision climate simulations, an application that can largely benefit from the advantages of low-precision computing is the numerical solution of partial differential equations (PDEs). However, a careful implementation and rounding error analysis are required to ensure that sensible results can still be obtained.
  
  In this paper we study the accumulation of rounding errors in the solution of the heat equation, a proxy for parabolic PDEs, via Runge-Kutta finite difference methods using round-to-nearest (RtN) and stochastic rounding (SR). We demonstrate how to implement the scheme to reduce rounding errors and we derive \emph{a priori} estimates for local and global rounding errors. Let $u$ be the unit roundoff. While the worst-case local errors are $O(u)$ with respect to the discretization parameters (mesh size and timestep), the RtN and SR error behavior is substantially different. In fact, the RtN solution always stagnates for small enough $\Delta t$, and until stagnation the global error grows like $O(u\Delta t^{-1})$. In contrast, we show that the leading-order errors introduced by SR are zero-mean, independent in space and mean-independent in time, making SR resilient to stagnation and rounding error accumulation. In fact, we prove that for SR the global rounding errors are only $O(u\Delta t^{-1/4})$ in 1D and are essentially bounded (up to logarithmic factors) in higher dimensions.
\end{abstract}

\begin{keywords}
	Rounding error analysis, floating-point arithmetic, stochastic rounding, Runge-Kutta methods, low-precision PDE solvers, finite differences.
\end{keywords}

\paragraph{AMS subject classification:}
65G50, 65G30, 65M06, 65M12, 65M15, 65M22, 65Y99, 65C20.

\section{Introduction} With the increase of machine learning and GPU acceleration applications as a driving force, the last few years have seen the rise of hardware-supported low-precision computing. In fact, support for the \cite{IEEE754-2008} standard fp16 format and the Google brain floating-point format bfloat16 support is growing among major hardware manufacturers\footnote{The following information is currently available on the manufacturers' respective websites:\\The fp16 format is currently supported by AMD (Instinct) and NVIDIA (Pascal, Volta and Ampere) GPUs, ARM (Armv8-A Neon) chips and partially by Intel (Xeon) chips. The bfloat16 format is supported by Intel NNP processors and FPGAs, Google TPUs, and by ARM (Armv8-A) chips. AMD has added support for bfloat16 in its ROCm libraries and Intel has announced that the future Cooper Lake and Sapphire Rapids CPUs will also support the format.}. These $16$-bit formats allow for much faster operations with much lower memory, bandwidth, and energy requirements in exchange for a reduction in precision and/or range and a consequential increase in rounding error. In this context, the much larger unit roundoff makes traditional worst-case rounding error bounds too large and thus uninformative, and a na\"ive implementation can cause numerical algorithms to lose all accuracy.

Nevertheless, careful algorithmic design can still yield extremely fast and relatively accurate results. For this reason, reduced- or mixed-precision algorithms have become very popular in numerical linear algebra \cite[]{abdelfattah2021survey,HighamMary2019} and machine learning \cite[]{GuptaEtAl2015,NaEtAl2017,OrtizEtAl2018,WangEtAl2018,MellempudiEtAl2019} communities. A substantial research effort is also currently spent to reduce the precision in climate and weather model simulations without affecting accuracy. The results are very promising and show large performance gains with negligible accuracy loss \cite[]{vavna2017single,duben2017study,Klower2021fluid16bit}, and show how numerical algorithms for the solution of PDEs can significantly benefit from a clever reduced-precision implementation.

In this paper we study the accumulation of rounding errors in the solution of the heat equation in $d\in\{1,2,3\}$ dimensions using either round-to-nearest (RtN) or stochastic rounding (SR).
SR was first conceptualized by \cite{Forsythe1950SR,Forsythe1959SR}, and by \cite{HullSwenson1966} and consists in randomly rounding a number $x$ in the floating-point range towards minus or plus infinity with different probabilities so that the resulting rounded quantity, $\sr(x)$, ($\sr(x)$ is the SR function, to be defined later in Section \ref{sec:preliminaries}) is exact in expectation, i.e.~$\E[\sr(x)]=x$. SR has two disadvantages with respect to RtN: since rounding \emph{away} from the nearest representable number is possible, it can produce up to twice the error. Furthermore, it requires random number generation, which is likely to be more expensive. Nevertheless, the use of SR comes with its advantages. For an overview of SR hardware and software implementations, analysis, and applications we refer the reader to the review article by \cite{croci2022stochastic}.

First, the roundoff errors generated by stochastic rounding are random variables, making SR the first rounding mode to be provably amenable to probabilistic rounding error analysis \cite[]{ConnollyHighamMary2020}. In contrast, the probabilistic treatment of deterministic rounding errors is only a modelling device \cite[]{VonNeumann1947,Henrici1962,Henrici1963,HullSwenson1966,HighamMary2019} and the smaller probabilistic bounds might not hold for RtN in some scenarios where systematic accumulation of rounding errors occurs \cite[]{HighamMary2019}.

Second, SR is effective at avoiding stagnation, a phenomenon that occurs e.g. when a (relatively) small, exactly representable number $\epsilon$ is added to a large, exactly representable number $x$. In RtN mode, if $\epsilon$ is smaller than half the gap between $x$ and its representable neighbours, then the quantity $x+\epsilon$ is rounded back to $x$ and the term $\epsilon$ is neglected. With SR this phenomenon does not occur since $\E[\sr(x+\epsilon)]=x+\epsilon$ and the addition is not ignored. 

For both of these reasons, SR in low precision has become popular in the machine learning literature since it can yield higher accuracy results at a lower cost when training neural networks \cite[]{GuptaEtAl2015,EssamEtAl2017,NaEtAl2017,OrtizEtAl2018,WangEtAl2018,MellempudiEtAl2019}. The effect of SR in the solution of ordinary and partial differential equations (ODEs and PDEs), and neural ODEs in low precisions has also been investigated experimentally \cite[]{Hopkins2019,FasiMikaitis2020,paxton2021climate}, showing that SR can still yield accurate results in situations where RtN fails to produce meaningful results. SR is also used in neuromorphic computing \cite[]{Loihi2018,SpiNNaker2}.

While SR resilience to stagnation is now well understood \cite[]{ConnollyHighamMary2020}, only limited theoretical results are present in the literature about the behavior of SR in common numerical algorithms. The only theoretical work that we are aware of is the recent paper by \cite{ConnollyHighamMary2020} in which the authors study the effects of SR in linear algebra operations and show how SR fits within recent research on probabilistic rounding error analysis \cite[]{HighamMary2019,IpsenZhou2019,HighamMary2020}. In the context of ODEs, the only applicable results that we are aware of come from the 1960s and the classical work of \cite{Henrici1962,Henrici1963} and the more recent paper by \cite{Arato1983}. While SR is not mentioned, Henrici and Arat\'o build on the assumption that at each timestep a local rounding error is introduced, which is treated as an independent, almost surely (a.s.) bounded random variable. A similar assumption is used by \cite{Jezequel1995}, who derives estimates for the first and second moment of the componentwise error caused by deterministic rounding modes in the solution of the heat equation. However, the results in \cite{Jezequel1995} do not improve on the worst-case linear error growth rate that is typical of  deterministic analyses of ODE solvers \cite[]{Henrici1962,Henrici1963}. We are not aware of any study in the literature in which sharper probabilistic rounding error estimates are derived in a context specific to PDEs.

It is worth noting that SR hardware support is currently scarce, yet growing due to its popularity in the machine learning and artificial intelligence community. SR is currently used by Intel neuromorphic chip Loihi \cite[]{Loihi2018} and will be supported in the future neuromorphic supercomputing architecture SpiNNaker2 \cite[]{SpiNNaker2}. The new Graphcore Intelligence Processing Unit (IPU), designed for machine learning and artificial intelligence applications, currently supports SR \cite[]{Graphcore2019}. Patents by Intel \cite[]{p_KaulEtAl2018}, AMD \cite[]{p_Loh2019}, NVIDIA \cite[]{p_AlbenEtAl2020}, IBM \cite[]{p_BradburyEtAl2017,p_Bradbury2017}, and other companies \cite[]{p_Kanter2010,p_Henry2018,p_Lifsches2020} have been filed describing possible SR implementations, suggesting that more chips supporting SR might be in the works. Finally, \cite{HighamPranesh2019} have developed a low precision emulator which supports stochastic rounding ``\emph{because of its increasing adoption in machine learning and the need for better understanding of its behavior}''. Other low-precision emulators supporting SR are available (cf.~\cite{fousse2007mpfr,zhang2019qpytorch,fasi2021algorithms}), see Section 7(d) in \cite{croci2022stochastic} for an overview.

\vspace{3pt}
In this paper, we make the following new contributions:
\begin{itemize}
	\item We analyze how rounding errors affect the numerical solution of the heat equation in 1D, 2D and 3D on the unit box. We consider a standard second-order finite-difference scheme in space and an arbitrary Runge-Kutta method in time, and we show how the numerical scheme should be implemented to reduce the effect of rounding errors and fully exploit the resilience to stagnation of SR. The key element is the use of the delta form of the timestepping scheme. While the delta form is a well-known technique in the numerical analysis and computational fluid dynamics community (cf.~Section 12.6 in \cite{lomax2013fundamentals}, and Chapter IV.8 in \cite{HairerWanner1996}), we are unaware of our rigorous analysis of its benefits. Our theory and experiments establish and confirm its efficacy in a probabilistic setting.
	
	\item Let $u$ be the unit roundoff. We show that local rounding errors (introduced at each timestep) are essentially $O(u)$ with respect to the discretization parameters and are neither independent across timesteps, nor can they be modelled as such. Previous work on probabilistic rounding error analysis for vector ODEs \cite[]{Henrici1962,Henrici1963,Arato1983} thus cannot be generalized to the parabolic PDE case since it relies on the local rounding errors to be $O(u\Delta t^p)$ for some $p\geq2$ and independent across timesteps.
	
	\item We derive \emph{a priori} error bounds for both local and global rounding errors. The local error estimates are worst-case bounds and hold for both RtN and SR. The global rounding error estimates depend on the dimension and are specific to the rounding mode used. In the latter case strong error bounds for the expected discrete $L^2$ and $L^\infty$ norms are provided. The key new result is that while it is known that for RtN the global error grows like $O(u\Delta t^{-1})$ in $d$-dimensions \cite[]{Henrici1962,Henrici1963}, for SR it is only $O(u\Delta t^{-1/4})$ in 1D and essentially $O(u)$ (up to logarithmic factors) for $d>1$. This is a much stronger result than what is achieved for numerical linear algebra computations, where SR was shown to only reduce the global rounding error from $O(nu)$ to $O(\sqrt{n}u)$, ($n$ here is the problem size) \cite[]{ConnollyHighamMary2020}. In contrast with numerical linear algebra, when solving the heat equation SR is therefore sufficient to achieve very good accuracy without resorting to other rounding error reducing techniques such as compensated summation; cf.~\cite[Section 4.3]{higham2002accuracy}. Numerical results support our findings.
\end{itemize}

This paper is structured as follows. In Section \ref{sec:preliminaries} we give some background information, we describe some basic properties of SR and we introduce the numerical scheme used. In Section \ref{sec:local} we describe how the numerical scheme should be implemented to reduce rounding errors and we derive worst-case local rounding error bounds for both the ``na\"ive'' and more accurate implementation. We conclude the section by showing that with RtN the numerical scheme always stagnates for small enough $\Delta t$. In Section \ref{sec:global} we combine the local results into global estimates for the global rounding error. We support our findings with numerical results presented in Section \ref{sec:num_res} and we give some concluding remarks in Section \ref{sec:conclusions}.

\section{Preliminaries}
\label{sec:preliminaries}

\subsection{Finite precision arithmetic and types of rounding}
In this paper we only consider the base-$2$ normalized floating-point number systems presented in Table \ref{tab:precision}. We define a floating-point number system as in the book by \cite{higham2002accuracy}:
\begin{definition}[Section 2.1 in \cite{higham2002accuracy}]
	A floating-point number system $F \subset \R$ is a subset of the real numbers whose elements have the form
	\begin{align}
	\label{eq:2}
	y=\pm m \times \beta^{e-t},
	\end{align}
	where $\beta,m,t\in\N$ and $e\in\mathbb{Z}$. The quantity $\beta$ is the base ($\beta=2$ in this paper), $t$ is the precision, $e$ with $e_{\min} \leq e \leq e_{\max}$ is the exponent and $m$ with $0\leq m \leq \beta^t -1$ is the significand. 
\end{definition}

\begin{table}[h!]
	\centering
	\begin{tabular}{@{}llllcc@{}}
		\toprule
		\multicolumn{1}{c}{Format} & \multicolumn{1}{c}{$u$} & \multicolumn{1}{c}{$x_{\min}$} & \multicolumn{1}{c}{$x_{\max}$} & $t$ & exponent bits \\ \midrule
		bfloat16                   & $3.91\times 10^{-3}$    & $1.18\times 10^{-38}$          & $3.39\times 10^{38}$           & $8$         & $8$      \\
		fp16                       & $4.88\times 10^{-4}$    & $6.10\times 10^{-5}$           & $6.55\times 10^{4}$            & $11$        & $5$      \\
		fp32 (single)              & $5.96\times 10^{-8}$    & $1.18\times 10^{-38}$          & $3.40\times 10^{38}$           & $24$        & $8$      \\
		fp64 (double)              & $1.11 \times 10^{-16}$  & $2.22\times 10^{-308}$         & $1.80\times 10^{308}$          & $53$        & $11$     \\ \bottomrule
	\end{tabular}
	\caption{\textit{Parameters of the floating-point number systems mentioned in the paper. Here $u=2^{-t}$ is the unit roundoff, $x_{\min}$ in the smallest normalized positive number, $x_{\max}$ is the largest finite number and $t$ is the precision. Note that bfloat16 and fp32 have the same exponent bits.}}
	\label{tab:precision}
	\vspace{0pt}
\end{table}
\begin{remark}
	For the sake of simplicity, the theory presented in this paper ignores effects related to the range of $F$ (such as underflow/overflow). The numerical results of Section \ref{sec:num_res}, however, do account for such behavior.
\end{remark}

A rounding function is a function $\fl:\R\rightarrow F$. In what follows, we work under the following standard floating-point error model (cf.~Chapter 2 of \cite{higham2002accuracy}):
\begin{align}
\label{eq:std_floating_point_error_model}
\text{fl}(a \text{ op } b) = (a \text{ op } b)(1 + \delta),\quad |\delta| < 2u,\quad\text{op}\in\{+,-,\times,\backslash\},
\end{align}
where $u$ is the unit roundoff and the factor of $2$ is to account for any type of rounding (not just round-to-nearest (RtN)). In this paper, we denote with $\fl$ the standard RtN function and with $\sr$ the SR function, defined as follows.
\begin{definition}[Stochastic rounding function]
	Let $x\in\R$ and $a,b\in F$ with $a\leq x\leq b$ to be the numbers of $F$ that are adjacent to $x$. Set $I=b-a$, Then the SR function $\sr(x)$ is the function that rounds $x$ to $a$ with probability $\varrho = (b-x)/I$ and to $b$ with probability $1-\varrho=(x-a)/I$.
\end{definition}

The following result, proven by \cite{ConnollyHighamMary2020}, shows that the roundoff errors arising from SR are mean-independent zero-mean random variable whose absolute value is bounded by $2u$.

\begin{lemma}[cf.~Lemmas 4.4 and 4.5 in \cite{ConnollyHighamMary2020}]
	\label{lemma:sr_properties}
	The SR function can be written as $\sr(x)=x(1+\delta)$, where $|\delta| < 2u$. Furthermore, the roundoff errors generated by SR are mean-independent zero-mean random variables, i.e.~they satisfy
	\begin{align}
	\label{eq:1}
	\E[\delta_i | \F_{i-1}] = \E[\delta_i] = 0,\quad\forall i,\quad\text{where}\quad \F_{i}=\{\delta_1,\dots,\delta_i\},
	\end{align}
	assuming that $\delta_1,\dots,\delta_i$ are generated in this order.
\end{lemma}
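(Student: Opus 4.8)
The plan is to prove the two assertions separately, both directly from the definition of $\sr$; nothing beyond the definition of a normalized base-$2$ system and the tower property of conditional expectation is needed, so the argument essentially reproduces that of \cite{ConnollyHighamMary2020}. Throughout I would, in keeping with the paper's convention that range effects are ignored, assume the rounded argument lies in the normalized range so that underflow and overflow do not interfere.

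First I would establish the representation together with the a.s.\ bound. If $x\in F$, take $\delta(\omega)=0$ and there is nothing to prove. Otherwise let $a<x<b$ be the floating-point neighbours of $x$ and $I=b-a$: since the system is normalized with precision $t$, if $x$ has exponent $e$ then $I=2^{e-t}$ while $|x|>2^{e-1}$, hence $I/|x|<2^{1-t}=2u$; and since $\sr(x)\in\{a,b\}$ with $x$ lying strictly between $a$ and $b$, one has $|\sr(x)-x|<I$, so $|\delta(\omega)|=|\sr(x)-x|/|x|<2u$. This bound is deterministic, hence holds a.s.

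Next, for the zero-mean property, I would compute directly
\[
\E[\sr(x)] = a\varrho + b(1-\varrho) = \frac{a(b-x)+b(x-a)}{I} = \frac{x(b-a)}{I} = x,
\]
whence $\E[\delta(\omega)]=0$ when $x\neq0$ (and $\delta=0$ trivially if $x=0$). For mean-independence I would model an SR computation as an ordered sequence of elementary operations, with $y_i$ the exact result of the $i$-th one, so that $\fl(y_i)=y_i(1+\delta_i)$ defines $\delta_i$. The value $y_i$ is a deterministic function of the (deterministic) input data and of the already-committed rounded results, hence is $\F_{i-1}$-measurable, and so are its neighbours $a_i,b_i$ and the gap $I_i$; the $i$-th rounding draws fresh randomness (a uniform draw on $[0,1)$) independent of $\F_{i-1}$, so conditioning on $\F_{i-1}$ leaves the conditional law of $\sr(y_i)$ equal to ``$a_i$ w.p.\ $(b_i-y_i)/I_i$, $b_i$ w.p.\ $(y_i-a_i)/I_i$''. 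Rerunning the previous display conditionally gives $\E[\sr(y_i)\mid\F_{i-1}]=y_i$, i.e.\ $\E[\delta_i\mid\F_{i-1}]=0$, and the tower property then yields $\E[\delta_i]=\E[\E[\delta_i\mid\F_{i-1}]]=0$, which is \eqref{eq:1}.

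The main obstacle is bookkeeping rather than mathematics: in the last step one must state precisely that the argument of the $i$-th rounding depends on the sample path only through $\delta_1,\dots,\delta_{i-1}$ and that the $i$-th random draw is independent of those, since this is exactly what upgrades $(\delta_i)_i$ from a mere sequence of zero-mean variables to a martingale-difference-type sequence, and it is the only place where a modelling assumption on how SR is implemented enters. The first two steps are one-line computations.
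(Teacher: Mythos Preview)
Your argument is correct. Note, however, that the paper does not supply its own proof of this lemma: it is stated with the attribution ``cf.~Lemmas 4.4 and 4.5 in \cite{ConnollyHighamMary2020}'' and no proof follows, so there is nothing in the paper to compare against beyond that citation. Your proposal is precisely the standard derivation one finds in that reference---the relative-gap bound $I/|x|<2u$ from normalization, the direct computation $\E[\sr(x)]=x$, and the observation that the $i$-th argument is $\F_{i-1}$-measurable while the $i$-th random draw is fresh---so it matches the intended source.
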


\begin{remark}
	To simplify the notation, from now on we set $\ups=u$ for RtN and $\ups=2u$ for SR so that we can just write $\ups$ when presenting results that apply to both. Furthermore, we use the following bounds valid for $n\ups<1$ \cite[Lemma 3.1]{higham2002accuracy},
	\begin{align*}
	\prod_{i=1}^m(1+\delta_i)^{\rho_i}=1 + \theta_m,\quad\text{with}\quad |\theta_m| \leq \gamma_m\ \text{(RtN and SR)},\quad |\theta_m| \leq \tilde{\gamma}_m\ \text{ with high probability }\ \text{(SR only)}
	\end{align*}
	where $\delta_i$ are roundoff errors generated with the same rounding mode, $\rho_i=\pm 1$ and
	\begin{align*}
	\gamma_m=\frac{m\ups}{1-m\ups},\quad \tilde{\gamma}_m=\exp\left(\frac{2c\sqrt{m}u+4mu^2}{1-2u}\right)-1=2c\sqrt{m}u+O(u^2).
	\end{align*}
	Here $c>0$ is an arbitrary constant that can typically be taken to be $O(1)$ \cite[]{ConnollyHighamMary2020}. The $\gamma_m$ bound is a classical result which always holds for both RtN and SR \cite[]{higham2002accuracy}, while the $\tilde{\gamma}_m$ bound for SR is a stochastic bound (for SR $\theta_m$ is a random variable) that was proved recently \cite[]{ConnollyHighamMary2020} and it only holds with high probability. In what follows we only use $\gamma_m$ for simplicity, but it can be replaced with $\tilde{\gamma}_m$ in the SR case, yielding smaller constants. 
\end{remark}

\subsection{Numerical discretization of the heat equation}

In this paper, we investigate the effects of rounding errors in the numerical solution of the heat equation with non-zero forcing term:

\begin{align}
\label{eq:main_eqn}
\left\{\begin{array}{llc}
\dot{\uu}(t,\bm{x}) - \Delta \uu(t,\bm{x}) = f(t,\bm{x}), & \bm{x}\in D=(0,1)^d, & t\in(0,T],\\
\uu(0,\bm{x}) = \uu_0(\bm{x}) & \bm{x}\in D=(0,1)^d, & \\
\uu(t,\bm{x}) = g(\bm{x}) & \bm{x} \in \partial D, & t\in(0,T],
\end{array}\right.
\end{align}
where $0\not\equiv f\in C^0(\bar{D}\times [0,T])$ and $\uu_0 \in C^2(\bar{D})$ with $\uu_0|_{\partial D} = g$. We choose time-independent boundary conditions for simplicity.

Let $\bi\in\N^d$ be a multi-index. We first discretize \eqref{eq:main_eqn} in space at $K+1$ discrete equispaced points $\{\bm{x}_{\bi}\}$ in each spatial direction with $\bm{x}_{\bi}=\bi h$ and $h=1/K$ using the standard finite difference scheme
\begin{align}
\label{eq:main_eqn_FD}
\dot{U}_{\bi}(t) - h^{-2}\sum\limits_{j=1}^d (U_{\bi+\bm{e}_j}(t) - 2U_{\bi}(t) + U_{\bi-\bm{e}_j}(t)) = f_{\bi}(t),\quad t\in[0,T],
\end{align}
where $U_{\bi}(t) \approx \uu(t,\bm{x}_{\bi})$, $f_{\bi}(t) = f(t,\bm{x}_{\bi})$, and $\bm{e}_j\in\R^d$ for $j=1,\dots,d$ are the canonical basis vectors. This can be rewritten in matrix form as:
\begin{align}
\label{eq:main_eqn_discrete}
\dot{\bm{U}}(t) + A \bm{U}(t) = \bm{f}(t),\quad t\in(0,T],
\end{align}
where $A\in\R^{(K-1)^d \times (K-1)^d}$ is the standard finite difference matrix of the $d$-di\-men\-sio\-nal Laplacian and $\bm{f}(t)$ incorporates the boundary conditions.

The eigenvalues and eigenfunctions of the continuous operator $-\Delta$ are \cite[Chapter VI]{CourantHilbert1953}
\begin{align}
\lambda_{\bm{k}}=\pi^2 \Vert \bm{k}\Vert_2^2,\quad v_{\bm{k}}(\bm{x})=\prod\limits_{j=1}^d\left(\sqrt{2}\sin(\pi\bm{k}_jx_j)\right),
\end{align}
where $\bm{k}\in\N_0^d$ is a multi-index. Their discrete version, the eigenpairs of $A$, are \cite[]{Kuttler1974}
\begin{align}
\label{eq:eigenpairsA}
\lambda_{\bm{k}}^h=4 K^2\sum\limits_{j=1}^d\sin^2\left(\frac{\pi}{2} \frac{\bm{k}_j}{K}\right),\quad \bm{v}^h_{\bm{k},\bi}=\prod\limits_{j=1}^d\left(\sin\left(\pi\bi_j\frac{\bm{k}_j}{K}\right)\right).
\end{align}
Here we have indexed the $\bm{k}$-th eigenvector $\bm{v_k}^h$ of $A$ as if it were a $d$-dimensional tensor with entries indexed by the multi-index $\bm{i}\in\N^d_0$. Both $\bm{i}$ and $\bm{k}$ satisfy $\bm{1}\leq \bi,\bm{k} \leq (K-1)\bm{1}$ entrywise, where $\bm{1}\in \R^d$ is the vector of all ones. Independently from the notation adopted, we will always consider $\bm{v}_{\bm{k}}$ to be a vector in $\R^{(K-1)^d}$. Note that we picked the scaling of the eigenvectors so that
\begin{align}
\label{eq:eigenpairsA_bounds}
\Vert\bm{v}^h_{\bm{k}}\Vert_2^2 = \left(\frac{K}{2}\right)^d,\quad \Vert\bm{v}^h_{\bm{k}}\Vert_\infty \leq 1,\quad\forall\bm{k},
\end{align}
where the norms are to be intended as vector norms. The discrete eigenvectors are orthogonal so that if $\bm{k}\neq\tilde{\bm{k}}$ or if $\bi\neq\tilde{\bi}$ we have that $(\bm{v}^h_{\bm{k},\bm{j}},\bm{v}^h_{\tilde{\bm{k}},\bm{j}}) = (\bm{v}^h_{\bm{k},\bi},\bm{v}^h_{\bm{k},\tilde{\bi}}) = 0$. Here $(\cdot,\cdot)$ denotes the vector inner product. 

By expanding $\bm{U}(t)$ in terms of the eigenvectors of $A$ we can decouple the ODE system \eqref{eq:main_eqn_discrete} into many scalar ODEs: set $\bm{U}(t)=\sum_{\bm{k}=\bm{1}}^{\bm{k}=(K-1)\bm{1}}a_{\bm{k}}(t)\bm{v}^h_{\bm{k}}$, and $\bm{f}(t)=\sum_{\bm{k}=\bm{1}}^{\bm{k}=(K-1)\bm{1}}f_{\bm{k}}(t)\bm{v}^h_{\bm{k}}$, plug these into \eqref{eq:main_eqn_discrete} and multiply by $\bm{v}_{\bar{\bm{k}}}$ to obtain
\begin{align}
\label{eq:eig_decoupled_ode}
\dot{a}_{\bar{\bm{k}}}(t) + \lambda^h_{\bar{\bm{k}}}a_{\bar{\bm{k}}}(t) = f_{\bar{\bm{k}}}(t),\quad t\in(0,T].
\end{align}

We now discretize \eqref{eq:main_eqn_discrete} in time by applying a generic Runge-Kutta (RK) method with absolute stability function $S(z)$, $z\in\mathbb{C}$; see \cite[Chapter 3]{ButcherBook2016}. Note that for explicit RK methods $S(z)$ is always a polynomial while for implicit methods $S(z)$ is a rational function. In both cases, $S(z)$ can always be written in the form $S(z) = 1 + z\tilde{S}(z)$ where $\tilde{S}(0)=1$; see \cite[Lemma 351A]{ButcherBook2016}. We discretize $\bm{U}(t)$ at times $t^n=n\Delta t$ for $n=0, \dots, N=T/\Delta t$ (assuming $T/\Delta t$ is integer). By applying the RK scheme to \eqref{eq:eig_decoupled_ode} and by summing over all eigenmodes we obtain the following scheme,
\begin{align}
\label{eq:num_scheme}
\bm{U}^{n+1} = S(-\Delta tA)\bm{U}^{n} + \Delta t \bm{F}^n &=\bm{U}^{n} + \left(-\Delta t\tilde{S}(-\Delta tA)A\bm{U}^{n} + \Delta t \bm{F}^n\right) \\
&= \bm{U}^{n} + (\Delta \bm{U})^{n},\notag\\
\text{where}\quad\bm{F}^n=\sum_{j=1}^sQ_j(-\Delta t A)\bm{f}^n_j&=\sum_{j=1}^sQ_j(-\Delta t A)\bm{f}(t^n+c_j\Delta t).
\end{align}
Here, all $Q_j(z)$ are rational functions (or polynomials if the method is explicit) that satisfy $\sum_{j=1}^sQ_j(z)=\tilde{S}(z)$. The $c_j\in \R$ are the nodes of the RK method and $s\in\N_0$ is the number of stages of the method, e.g.~see \cite[Chapter 3]{ButcherBook2016}.
The two equations on the right in \eqref{eq:num_scheme} are the \emph{delta form} of the scheme \cite[Section 12.6]{lomax2013fundamentals}, \cite[Chapter IV.8]{HairerWanner1996}, in which $(\Delta \bm{U})^n$ is computed first and then added to $\bm{U}^n$. As we will see in the next section, evaluating the scheme in its delta form helps to reduce rounding errors.

\section{Local rounding error analysis}
\label{sec:local}

When working in finite precision, the numerical scheme \eqref{eq:num_scheme} cannot be evaluated exactly. What we instead compute is
\begin{align}
\label{eq:num_scheme_finite_precision}
\widehat{\bm{U}}^{n+1} = S(-\Delta tA)\widehat{\bm{U}}^{n} + \Delta t\bm{F}^n + \mathlarger{\mathlarger{{\varepsilon}}}^n,
\end{align}
where $\EPS^n\in\R^{(K-1)^d}$ is the local rounding error and it incorporates all rounding errors introduced at timestep $n$ for a generic implementation. When the numerical scheme is implemented in delta form, we write instead
\begin{align}
\label{eq:num_scheme_finite_precision_delta}
\widehat{\bm{U}}^{n+1} = \widehat{\bm{U}}^{n} + (\Delta\widehat{\bm{U}})^n + \eps^n +~\Theta^n.
\end{align}
To properly define $\Theta^n$ and $\eps^n$, let $\tilde{\Theta}^n$ be the local rounding error arising from the computation of $(\Delta\widehat{\bm{U}})^n$, then we have under the standard floating-point error model \eqref{eq:std_floating_point_error_model},
\begin{align}
&\widehat{\bm{U}}^{n+1}_{\bm{j}} - \left(\widehat{\bm{U}}^{n}_{\bm{j}} + (\Delta\widehat{\bm{U}})^n_{\bm{j}}\right) = (1+\delta_{\bm{j}})\left(\widehat{\bm{U}}^{n}_{\bm{j}} + ((\Delta\widehat{\bm{U}})^n_{\bm{j}} + \tilde{\Theta}^n_{\bm{j}})\right)- \left(\widehat{\bm{U}}^{n}_{\bm{j}} + (\Delta\widehat{\bm{U}})^n_{\bm{j}}\right)\notag\\
=&\left(\tilde{\Theta}^n_{\bm{j}} + \delta_{\bm{j}}(\Delta\widehat{\bm{U}})^n_{\bm{j}} + \delta_{\bm{j}}\tilde{\Theta}^n_{\bm{j}}(\Delta\widehat{\bm{U}})^n_{\bm{j}}\right) + \delta_{\bm{j}}\widehat{\bm{U}}^{n}_{\bm{j}} = \left(\Theta^n_{\bm{j}}\right) + \eps^n_{\bm{j}},
\label{eq:Theta}
\end{align}
where $\eps^n_{\bm{j}} = \delta_{\bm{j}}\widehat{\bm{U}}^{n}_{\bm{j}}$ essentially captures the leading order error term arising from the addition of $\widehat{\bm{U}}^{n}$ and $(\Delta\widehat{\bm{U}})^n$. The distinction between $\eps^n$ and $\Theta^n$ is important: in Sections \ref{sec:global} and \ref{sec:num_res} we show how the $\eps^n$ term dominates and is much easier to analyse than $\Theta^n$. Note that $\EPS^n$ is typically different from $\eps^n + \  \Theta^n$ since the errors come from different implementations. In fact, we show in Section \ref{sec:num_res} that the delta form local error is typically smaller.

Our final aim is to derive an estimate for the global rounding error, defined as a norm of the quantity
\begin{align}
\bm{E}^n = \widehat{\bm{U}}^n-\bm{U}^n,
\end{align}
which we will call the global rounding error vector at timestep $n$.

\begin{remark}
	For completeness, we define $\EPS^{-1}$, $\eps^{-1}$ to be the local errors due to the rounding of the initial condition. They satisfy $\EPS^{-1} \equiv \eps^{-1} := \widehat{\bm{U}}^0 - \bm{U}^0$.
\end{remark}

In this section we derive worst-case error bounds for the local errors $\EPS^n$, $\eps^n$ and $\Theta^n$ (valid for both RtN and SR) and we explain why the RtN solution is always affected by stagnation for small enough $\Delta t$. We then use the local error results to obtain a bound for the global rounding error in Section \ref{sec:global}.

\subsection{Matrix-vector products in the delta form implementation}
\label{subsec:matvecs}

When working with the delta form, we assume that the matrix-vector product $-A\widehat{\bm{U}}^n$ is computed via first order differences as follows (using the same notation as in \eqref{eq:main_eqn_FD}).
\begin{align}
\label{eq:2firstorder_matvec}
(-A\widehat{\bm{U}}^n)_{\bi} = h^{-2}\sum\limits_{j=1}^d[(\widehat{\bm{U}}_{\bi+\bm{e}_j}^n - \widehat{\bm{U}}_{\bi}^n) - (\widehat{\bm{U}}_{\bi}^n - \widehat{\bm{U}}_{\bi-\bm{e}_j}^n)].
\end{align}
This approach is much less prone to rounding errors than using a direct second order difference (i.e.~made of terms $\widehat{\bm{U}}_{\bi+\bm{e}_j}^n - 2\widehat{\bm{U}}_{\bi}^n + \widehat{\bm{U}}_{\bi-\bm{e}_j}^n$). The following two theorems explain why. The first theorem is a generalization of Sterbenz lemma (cf.~Theorem 2.5 in \cite[]{higham2002accuracy}):
\begin{theorem}
	\label{th:2firstorderdiff}
	If $a,b,c\in\R$ are exactly
	represented in floating-point arithmetic, and 
	\begin{align}
	\max\left( |a-b|, |b-c|\right) \leq
	\min( |a|, |b|, |c|) 
	\end{align}
	then $a-b$ and $b-c$ are both computed exactly as long as subnormal numbers are supported or underflow does not occur.
%
\end{theorem}

\begin{proof}
	Let $\bar{e}\in\mathbb{Z}$ be the largest integer so that $a$, $b$,
	$c$ can be written as 
	$
	a = k \times 2^{\bar{e}}, ~~ 
	b = l \times 2^{\bar{e}}, ~~
	c = m \times 2^{\bar{e}},
	$
	where $k,l,m\in\mathbb{Z}$.	Hence,
	\begin{align}
	a - b = (k-l) \times 2^{\bar{e}}, ~~
	b - c = (l-m) \times 2^{\bar{e}}.
	\end{align}
	The condition on $a-b$ implies that
	$|k-l| \leq \min(|k|,|l|,|m|)$ and therefore (if subnormal numbers are supported or barring underflow)
	$(k-l) \times 2^{\bar{e}}$ has an exact floating-point representation,
	and the same applies to $(l-m) \times 2^{\bar{e}}$.
\end{proof}

Depending on which one of the three statements in the following theorem holds, we have a rounding error arising from the term $-\Delta t A\widehat{\bm{U}}^n$ of order $O(u\Delta t/h^2)$ (Statement 1), $O(u\Delta t/h)$ (Statement 2) or $O(u\Delta t)$ (Statement 3). In the latter two cases, the error is reduced by an $O(h)$ and $O(h^2)$ factor respectively, and decreases with $\Delta t$, thus explaining why the evaluation of the matrix-vector product via \eqref{eq:2firstorder_matvec} is much more accurate.

\begin{theorem}
	\label{th:2firstorderdiff2}
	Let $\ups<1/4$ and consider the evaluation of
	\begin{align}
	\label{eq:2firstorder_matvec2}
	\sum\limits_{j=1}^d[(\widehat{\bm{U}}_{\bi+\bm{e}_j}^n - \widehat{\bm{U}}_{\bi}^n) - (\widehat{\bm{U}}_{\bi}^n - \widehat{\bm{U}}_{\bi-\bm{e}_j}^n)].
	\end{align}
	The following statements hold:
	\begin{enumerate}
		\item If for all $\bm{i},n$ $|\widehat{\bm{U}}^n_{\bm{i}}|\leq M$, then the evaluation of both \eqref{eq:2firstorder_matvec2} and a direct second-order difference produces a rounding error of at most $4d\gamma_{d+1}M$. Furthermore, \eqref{eq:2firstorder_matvec2} is bounded in absolute value by $4dM$.
		\item If there exists $\bar{c}>0$ such that for all $\bi,j,n$
		\begin{align}
		\label{eq:th2firstorder1}
		\max\left(|\widehat{\bm{U}}_{\bi+\bm{e}_j}^n - \widehat{\bm{U}}_{\bi}^n|,|\widehat{\bm{U}}_{\bi}^n - \widehat{\bm{U}}_{\bi-\bm{e}_j}^n|\right) \leq \bar{c}h,
		\end{align}
		then \eqref{eq:2firstorder_matvec2} is bounded in absolute value by $2\bar{c}dh$ and its evaluation produces a rounding error of at most $2\bar{c}d\gamma_{d+1}h$.
		\item If for all $\bi,j,n$ the assumptions of Theorem \ref{th:2firstorderdiff} hold with $a=\widehat{\bm{U}}_{\bi+\bm{e}_j}^n$, $b=\widehat{\bm{U}}_{\bi}^n$ and $c=\widehat{\bm{U}}_{\bi-\bm{e}_j}^n$, and there exists $\bar{c}>0$ such that
		\begin{align}
		\label{eq:th2firstorder2}
		|(\widehat{\bm{U}}_{\bi+\bm{e}_j}^n - \widehat{\bm{U}}_{\bi}^n) - (\widehat{\bm{U}}_{\bi}^n - \widehat{\bm{U}}_{\bi-\bm{e}_j}^n)| \leq \bar{c}h^2,
		\end{align}
		then \eqref{eq:2firstorder_matvec2} is bounded in absolute value by $\bar{c}dh^2$ and its evaluation produces a rounding error of at most $\bar{c}d\gamma_dh^2$.
	\end{enumerate}
	
\end{theorem}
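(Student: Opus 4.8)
The plan is to reduce all three items to two ingredients: a bound on the exact value of the generic summand
\[
t_j := (\hat{\bm{U}}_{\bi+\bm{e}_j}^n - \hat{\bm{U}}_{\bi}^n) - (\hat{\bm{U}}_{\bi}^n - \hat{\bm{U}}_{\bi-\bm{e}_j}^n),
\]
and a count of how many floating-point roundings are committed in forming each $t_j$ and in accumulating the $d$ of them. I abbreviate $a=\hat{\bm{U}}_{\bi+\bm{e}_j}^n$, $b=\hat{\bm{U}}_{\bi}^n$, $c=\hat{\bm{U}}_{\bi-\bm{e}_j}^n$; the hypothesis $\ups<1/4$ is used only to keep $\gamma_m$ well defined for the finitely many $m\le d+1\le 4$ that appear.

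For the magnitude bounds I would use the triangle inequality $|t_j|=|(a-b)-(b-c)|\le|a-b|+|b-c|$, which is $\le 4M$ under the hypothesis of item 1, $\le 2ch$ under \eqref{eq:th2firstorder1}, and is exactly the quantity assumed $\le ch^2$ in \eqref{eq:th2firstorder2}; summing over $j=1,\dots,d$ gives the claimed bounds $4dM$, $2cdh$, $cdh^2$ on \eqref{eq:2firstorder_matvec2}, and these also bound the direct second-order difference $\sum_j(a-2b+c)$ since it has the identical exact value.

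For the rounding-error bounds I would invoke the standard accumulation $\prod_i(1+\delta_i)^{\pm1}=1+\theta_m$, $|\theta_m|\le\gamma_m$, recalled in the Remark above. In item 1, forming $t_j$ costs one rounding for $a-b$, one for $b-c$ and one for the outer subtraction, so the computed summand equals $(a-b)(1+\theta_2)-(b-c)(1+\theta_2')$ with $|\theta_2|,|\theta_2'|\le\gamma_2$; accumulating the $d$ summands in any fixed order commits at most $d-1$ further roundings, so each of $a-b$ and $b-c$ enters with a factor bounded by $\gamma_{d+1}$, and since $|a-b|,|b-c|\le 2M$ the total error is at most $\sum_{j=1}^d(|a-b|+|b-c|)\gamma_{d+1}\le 4dM\gamma_{d+1}$. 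The direct second-order difference is handled the same way: $2b$ is exact in binary, the intermediate $a-2b$ satisfies $|a-2b|\le 3M$, the computed term is $(a-2b)(1+\theta_2)+c(1+\theta_1)$, and $3M\gamma_2+M\gamma_1\le 4M\gamma_2$, so after accumulation the same $4dM\gamma_{d+1}$ bound holds. Item 2 is identical with $|a-b|+|b-c|\le 2ch$ replacing $4M$, giving $2cd\gamma_{d+1}h$.

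Item 3 is where Theorem \ref{th:2firstorderdiff} does the work: its hypothesis i) forces both $a-b$ and $b-c$ to be computed \emph{exactly}, so forming $t_j$ now costs only the single rounding of the outer subtraction, i.e.\ the computed summand is $t_j(1+\delta)$ with $|\delta|<\ups$. The $d$-fold accumulation adds at most $d-1$ roundings, so each $t_j$ enters the computed sum with a factor bounded by $\gamma_d$, and the total error is $\le\sum_{j=1}^d|t_j|\gamma_d\le cd\gamma_dh^2$ by the magnitude bound already obtained. The step I expect to need the most care is the exact bookkeeping of roundings in the length-$d$ accumulation — worst case $d-1$, not $d$ — since that single unit is precisely what distinguishes the $\gamma_{d+1}$ of items 1--2 from the $\gamma_d$ of item 3; the rest is a routine chain of triangle inequalities.
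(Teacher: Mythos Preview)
Your proposal is correct and follows essentially the same approach as the paper: both analyses write the computed summand as $((a-b)(1+\delta_1)-(b-c)(1+\delta_2))(1+\delta_3)$, bound each first-order difference by $2M$, $ch$, or use Theorem~\ref{th:2firstorderdiff} to set $\delta_1=\delta_2=0$ in item~3, and then absorb the $d-1$ accumulation roundings to reach $\gamma_{d+1}$ (items~1--2) or $\gamma_d$ (item~3). Your bookkeeping is in fact slightly more explicit than the paper's --- e.g.\ you note that $2b$ is exact in binary and carry out the $3M\gamma_2+M\gamma_1\le 4M\gamma_2$ estimate for the direct second-order difference, whereas the paper simply asserts the Statement~1 bound applies --- but the substance is identical.
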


\begin{proof}
	The upper bounds on the absolute value of the rounding errors arising from the summation in \eqref{eq:2firstorder_matvec2} are obtained via the triangle inequality so we only need to prove bounds on the rounding errors of each of the individual terms.	Evaluation of each term in the sum in \eqref{eq:2firstorder_matvec2} in finite precision yields,
	\begin{align}
	\left((\widehat{\bm{U}}_{\bi+\bm{e}_j}^n - \widehat{\bm{U}}_{\bi}^n)(1+\delta_1) - (\widehat{\bm{U}}_{\bi}^n - \widehat{\bm{U}}_{\bi-\bm{e}_j}^n)(1+\delta_2)\right)(1+\delta_3),
	\end{align}
	where $\delta_1$ and $\delta_2$ are the rounding errors in the first order differences and $\delta_3$ is the roundoff coming from the last subtraction. Note that all entries of $\widehat{\bm{U}}^n$ are already rounded by construction, hence exactly representable. Under the assumptions of Statement 1 we have a maximum rounding error of $4M((1+\ups)^2-1)$ for each term in the sum in \eqref{eq:2firstorder_matvec2}. After accounting for the summation error we obtain a rounding error bound of $4dM((1+\ups)^{d+1}-1)\leq 4d\gamma_{d+1}M$. The same reasoning yields a bound of $2\bar{c}d\gamma_{d+1}h$ if \eqref{eq:th2firstorder1} holds. To prove Statement 3, we note that owing to Theorem \ref{th:2firstorderdiff} the first order differences are computed exactly, so $\delta_1=\delta_2=0$. This gives an error of $\bar{c}\ups h^2$ for each term in the summation and a total error bounded above by $\bar{c}d\gamma_dh^2$, which is the desired result. If we use a second-order difference, we get instead
	\begin{align}
	\left((\widehat{\bm{U}}_{\bi+\bm{e}_j}^n - 2\widehat{\bm{U}}_{\bi}^n)(1+\delta_1) + \widehat{\bm{U}}_{\bi-\bm{e}_j}^n\right)(1+\delta_2),
	\end{align}
	and the only obtainable upper bound for each term in the summation is the one from Statement 1. Here we have assumed that multiplication by $2$ is exact in base-$2$ floating-point arithmetic, although this assumption matters little for the sake of the argument. The result does not change if a different summation order is used for the evaluation. 
\end{proof}

\begin{remark}
	In the limit $h \rightarrow 0$, the assumptions of Statements 2 and 3 in Theorem \ref{th:2firstorderdiff2} do not necessarily hold since they require that in the limit $\widehat{\bm{U}}_{\bi}^n = \widehat{\bm{U}}_{\bi\pm\bm{e}_j}^n$ for all $\bi,j,n$. However, we believe that it is reasonable to assume at least \eqref{eq:th2firstorder1} for $u\ll h$.
\end{remark}

\subsection{Worst-case bounds for the local rounding errors}

In what follows, we prove that the components of $\EPS^n$, $\eps^n$, $\Theta^n$ are bounded so that there exist positive constants $\EPS$, $\varepsilon$, $\theta$ such that $|\EPS_j^n|\leq \EPS$, $|\eps^n_j|\leq \varepsilon$, $|\Theta^n_j|\leq \theta$ for all $j$, $n$. Note that $\EPS$, $\varepsilon$, and $\theta$ typically depend on the rounding mode, on the unit roundoff $u$, on the ratio $\lambda = \Delta t/h^2$, and on the RK scheme used. We show that when the RK method is explicit $\EPS$ and $\varepsilon$  depend on $\lambda$, but not on $\Delta t$ and $K$, while $\theta$ is small for small $\Delta t$. Things behave similarly for implicit methods, but the bounds might also grow with $K$, depending on the linear solver used. We make the following three assumptions:
\begin{enumerate}
	\item[(A1)] There exist constants $C_f,M_f>0$ such that for all $(t,\bm{x})\in[0,T]\times D$, we have $|f(t,\bm{x})|\leq M_f$ and the evaluation of $f(t,\bm{x})$ in finite precision produces
	\begin{align*}
	\widehat{f}(t,\bm{x})=f(t,\bm{x}) + (\Delta f)(t,\bm{x}),\ \text{with}\ |(\Delta f)(t,\bm{x})| \leq C_f \ups|f(t,\bm{x})|\leq C_fM_f\ups.
	\end{align*}
	\item[(A2)] There exists $M>0$ such that the quantities $\widehat{\bm{U}}^{n}$ satisfy $\Vert\widehat{\bm{U}}^{n}\Vert_\infty \leq M$ for all $n$. Note that $M$ might depend on $h$ and $\Delta t$.
	\item[(A3)] We assume that for $p\geq 0$, $\Vert\Delta t A\widehat{\bm{U}}^n\Vert_\infty\leq c_a(d,\lambda)\Delta t^p$ for some constant $c_a(d,\lambda)>0$, and that the evaluation of $-\Delta t A\widehat{\bm{U}}^n$ yields an error bounded above by $c_a(d,\lambda)\Delta t^p\ups$. For instance, this is the case if the hypotheses of Theorem \ref{th:2firstorderdiff2} are satisfied with $p=0$, $p=1/2$ or $p=1$ according to whether Statement 1, 2 or 3 holds.
\end{enumerate}\vspace{3pt}

\noindent
First, we need an auxiliary result:
\begin{theorem}
	\label{th:matrix_rational_function_bound}
	Let $\bm{v}$ be an arbitrary vector in $\R^{(K-1)^d}$. Let $P(x)=N(x)/W(x)$ be either $S(x)$, $\tilde{S}(x)$ or $Q_j(x)$ for any $j=1,\dots,s$. Assume that $P(x)$ has no poles on the negative real axis. Let $\bm{z}$ solve $W(-\Delta t A)\bm{z}=N(-\Delta t A)\bm{v}$ and let $\widehat{\bm{z}}$ be the result of computing $\bm{z}$ in finite precision by solving the linear system using a stable solver. Then, there exist constants $C_0,C>0$ such that
	\begin{align}
	\Vert\bm{z}\Vert_\infty\leq C_0(s,d,\lambda,K)\Vert\bm{v}\Vert_\infty,\quad \Vert\widehat{\bm{z}}-\bm{z}\Vert_{\infty}\leq C(s,d,\lambda,K)\ups\Vert\bm{v}\Vert_\infty.
	\end{align}
	The second bound only holds for $\ups$ sufficiently small. If the RK method is explicit the constants $C_0$ and $C$ still depend on $\lambda$, but not directly on $K$ (recall that $K=1/h$). The bound still holds, albeit with a slightly larger constant, if $\bm{v}$ is also computed inexactly yielding $\widehat{\bm{v}}$ with $\Vert\widehat{\bm{v}} - \bm{v}\Vert_\infty \leq c\ups\Vert\bm{v}\Vert_{\infty}$ for some $c>0$. 
\end{theorem}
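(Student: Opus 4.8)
The plan is to pass to the spectral representation of $-\Delta t A$ and reduce both bounds to a scalar estimate on the interval where the eigenvalues of $-\Delta t A$ live. By \eqref{eq:eigenpairsA} the eigenvalues of $A$ are $\lambda^h_{\bm k}=4K^2\sum_{j}\sin^2(\tfrac{\pi}{2}\tfrac{\bm k_j}{K})\in(0,4dK^2]$, and since $\Delta t=\lambda h^2=\lambda K^{-2}$ the eigenvalues of $-\Delta t A$ all lie in $[-4d\lambda,0)$. Because $P$ is rational with no poles on $(-\infty,0]$ (all of $S$, $\tilde S$, $Q_j$ are finite at $x=0$), $\mu_P:=\max_{x\in[-4d\lambda,0]}|P(x)|$ is finite and depends only on $s,d,\lambda$; likewise $\mu_W:=\min_{x\in[-4d\lambda,0]}|W(x)|>0$. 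For the first bound, diagonalise $A=V\Lambda V^{-1}$ with $V$ the (real) matrix of discrete-sine eigenvectors; by \eqref{eq:eigenpairsA_bounds} one has $V^{\!\top}V=(K/2)^d I$, so conjugation by $V$ preserves the $2$-norm and $\|P(-\Delta t A)\|_2=\|P(-\Delta t\Lambda)\|_2=\max_{\bm k}|P(-\Delta t\lambda^h_{\bm k})|\le\mu_P$. Converting norms, $\|\bm z\|_\infty\le\|\bm z\|_2\le\mu_P\|\bm v\|_2\le\mu_P(K-1)^{d/2}\|\bm v\|_\infty$, which is the first inequality with $C_0=\mu_P(K-1)^{d/2}$.

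For an explicit method $W\equiv1$ and $P$ is a polynomial $\sum_{m=0}^{M_P}a_m x^m$ whose degree $M_P$ and coefficients depend only on $s$ and the RK tableau; using the crude but $K$-free bound $\|\Delta t A\|_\infty\le 4d\lambda$ (the absolute row sums of $A$ are at most $4dK^2$ by \eqref{eq:main_eqn_FD}) gives instead $\|\bm z\|_\infty\le\big(\sum_m|a_m|(4d\lambda)^m\big)\|\bm v\|_\infty$, so $C_0=C_0(s,d,\lambda)$ as claimed. For the rounding-error bound in the explicit case I would evaluate $\bm z=P(-\Delta t A)\bm v$ by a Horner-type recursion: $O(s)$ steps, each a matrix–vector product by $-\Delta t A$ carried out through \eqref{eq:2firstorder_matvec} — whose rounding error on a generic vector $\bm w$ is at most $c(d)\lambda\ups\|\bm w\|_\infty$ by the matvec part (Statement~1) of Theorem~\ref{th:2firstorderdiff2} — followed by a scaled vector addition controlled by the standard model \eqref{eq:std_floating_point_error_model}. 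Since every intermediate vector has $\infty$-norm $\lesssim C_0\|\bm v\|_\infty$, accumulating the $O(s)$ contributions and taking $\ups$ small enough for the $\gamma_m$-bounds of the Remark to be valid yields $\|\hat{\bm z}-\bm z\|_\infty\le C(s,d,\lambda)\ups\|\bm v\|_\infty$, still $K$-free.

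For an implicit method I would first form $\hat{\bm b}\approx N(-\Delta t A)\bm v$ by the same Horner evaluation, with $\|\hat{\bm b}-\bm b\|_\infty\le c_N\ups\|\bm v\|_\infty$, and then solve $W(-\Delta t A)\bm z=\bm b$; a stable solver returns $\hat{\bm z}$ with $(W(-\Delta t A)+\Delta W)\hat{\bm z}=\hat{\bm b}$ and $\|\Delta W\|_\infty\le c_W\ups\|W(-\Delta t A)\|_\infty$. A first-order perturbation expansion gives $\hat{\bm z}-\bm z=W(-\Delta t A)^{-1}\big[(\hat{\bm b}-\bm b)-\Delta W\,\hat{\bm z}\big]+O(\ups^2)$, and bounding $\|W(-\Delta t A)^{-1}\|_\infty\le(K-1)^{d/2}/\mu_W$ (again via the $2$-norm and \eqref{eq:eigenpairsA_bounds}) together with $\|\hat{\bm z}\|_\infty\le(1+o(1))C_0\|\bm v\|_\infty$ from the first part yields the second inequality with $C=C(s,d,\lambda,K)$. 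Finally, for an inexact input $\hat{\bm v}=\bm v+\Delta\bm v$, $\|\Delta\bm v\|_\infty\le c\ups\|\bm v\|_\infty$, linearity shifts the exact output by $P(-\Delta t A)\Delta\bm v$, of $\infty$-norm $\le C_0c\ups\|\bm v\|_\infty$, while the rounding contribution is still $\le C\ups\|\hat{\bm v}\|_\infty\le C\ups(1+c\ups)\|\bm v\|_\infty$; the triangle inequality then gives the same estimates with $C$ replaced by $C+C_0c+O(\ups)$.

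The main obstacle is the implicit case: making precise what is meant by a \emph{stable solver} and obtaining the attendant $\infty$-norm bound on $W(-\Delta t A)^{-1}$ in a way that cleanly isolates the (harmless, statement-permitted) $K$-dependence from the $K$-free behaviour required for explicit methods — which in turn forces the explicit estimates to be argued directly in the $\infty$-norm, never through the $(K-1)^{d/2}$ norm-equivalence factor, and demands careful bookkeeping of the smallness conditions on $\ups$ under which the $\gamma_m$-bounds and the perturbation expansion remain valid.
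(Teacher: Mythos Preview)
Your proposal is correct and structurally parallels the paper's argument, but the execution differs in a few places worth noting.

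The paper never passes through the $2$-norm. It first proves an auxiliary lemma for the polynomial case (applicable to $N$, $W$, and to explicit methods directly): using only $\|{-\Delta t A}\|_\infty=4d\lambda$ and the sparsity of $A$ (at most $3d$ nonzeros per row), it bounds $\|P(-\Delta t A)\bm v\|_\infty$ and the associated rounding error by $K$-free constants, citing Theorem~4.5 in Higham's \emph{Functions of Matrices} for the polynomial-evaluation error and the standard sparse matvec bound $\widehat{B\bm v}=(B+\Delta B)\bm v$, $|\Delta B|\le\gamma_m|B|$, rather than re-deriving a Horner recursion or appealing to Theorem~\ref{th:2firstorderdiff2}. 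For the implicit case the paper then writes $\|\bm z\|_\infty\le\alpha c_0\|\bm v\|_\infty$ with $\alpha:=\|W(-\Delta t A)^{-1}\|_\infty$ left abstract (this is where the possible $K$-dependence enters), and obtains the rounding bound by invoking the classical linear-system perturbation result (Theorem~7.2 in Higham's \emph{Accuracy and Stability}).

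Your spectral $2$-norm route with the $(K-1)^{d/2}$ norm-equivalence factor is a legitimate alternative for the implicit bound --- it makes the $K$-dependence explicit at the cost of being cruder than simply writing $\alpha=\|W(B)^{-1}\|_\infty$. The trade-off is that the paper's purely $\infty$-norm argument immediately gives the $K$-free explicit-method constants without a separate case, and avoids the bookkeeping of norm conversion. Your treatment of the inexact-input case is the same as the paper's.
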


\begin{proof}
	This is a combination of classical results from \cite{higham2002accuracy}, cf.~\ref{sec:appA}.
\end{proof}

We recall that requiring that a linear system, e.g.~$W\bm{z}=\bm{y}$, is solved by using a stable solver means that the computed solution $\widehat{\bm{z}}$ satisfies for some $c_1,c_2>0$, (cf.~\cite{higham2002accuracy})
\begin{align}
\label{eq:stable_solver}
(W+\Delta W)\widehat{\bm{z}}=\bm{y}+\Delta \bm{y},\quad\text{where}\quad \Vert\Delta W\Vert_\infty\leq c_1\ups,\quad \Vert\Delta \bm{y}\Vert_\infty \leq c_2\ups.
\end{align}
Here we focus on rounding errors, but \eqref{eq:stable_solver} and Theorem \ref{th:matrix_rational_function_bound} can easily be extended to account for other sources of error arising, e.g.~when an iterative method is used. We can now prove the following theorem:
\begin{theorem}
	\label{th:eps_expl_impl}
	Let Assumptions (A1), (A2) and (A3) and the assumptions of Theorem \ref{th:matrix_rational_function_bound} hold. Then, there exist constants $\bar{C}_1,\bar{C}_2,\bar{C}_3>0$ depending on the rounding function used such that, for all $n$,
	\begin{align}
	\Vert\mathlarger{\mathlarger{{\varepsilon}}}^n\Vert_\infty &\leq (\bar{C}_1(s,d,\lambda,K)M + \Delta t\bar{C}_2(s,d,\lambda,K)M_f)\ups \equiv \mathlarger{\mathlarger{{\varepsilon}}},\\
	\Vert\Theta^n\Vert_\infty &\leq \bar{C}_3(p,s,d,\lambda,K)\Delta t^p\ups \equiv \theta,\ \hspace{18pt}\Vert\eps^n\Vert_\infty \leq M\ups\equiv\varepsilon.
	\end{align}
	where $\lambda=\Delta t/h^2$ and the first two bounds only hold for $\ups$ sufficiently small. If the RK method is explicit $\bar{C}_1,\bar{C}_2$, and $\bar{C}_3$ still depend on $\lambda$, but not directly on $K$  (recall that $K=1/h$).
\end{theorem}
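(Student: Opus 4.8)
The plan is to prove the three bounds in increasing order of difficulty, relying on Theorem~\ref{th:matrix_rational_function_bound} to control the error produced when any of $S$, $\tilde S$ or the $Q_j$ is applied to a vector, and on the $\gamma_m$ factors of the Remark above to absorb the elementary operations (scalar multiplications, the $s$-term summation, and the final additions). The bound on $\eps^n$ is immediate and needs no smallness of $\ups$: by definition $\eps^n_{\bm j}=\delta_{\bm j}\hat{\bm U}^n_{\bm j}$ with $|\delta_{\bm j}|<\ups$, and Assumption~(A2) gives $|\hat{\bm U}^n_{\bm j}|\le M$ for all $\bm j,n$, hence $\|\eps^n\|_\infty\le M\ups$.

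\textbf{The bound on $\EPS^n$.} I would unfold the generic evaluation \eqref{eq:num_scheme_finite_precision} of $S(-\Delta tA)\hat{\bm U}^n+\Delta t\sum_{j=1}^s Q_j(-\Delta tA)\hat{\bm f}^n_j$ into its constituent steps. First, Assumption~(A1) controls the evaluation of the forcing terms: $\|\hat{\bm f}^n_j-\bm f^n_j\|_\infty\le C_fM_f\ups$ and $\|\hat{\bm f}^n_j\|_\infty\le M_f(1+C_f\ups)$. Next, for $\ups$ sufficiently small, Theorem~\ref{th:matrix_rational_function_bound} bounds the error in the computed value of $S(-\Delta tA)\hat{\bm U}^n$ by $C\ups\|\hat{\bm U}^n\|_\infty\le C\ups M$ and --- using the final sentence of that theorem, which allows the input vector to be inexact --- the error in the computed value of $Q_j(-\Delta tA)\hat{\bm f}^n_j$ (measured against the exact $Q_j(-\Delta tA)\bm f^n_j$) by $C'\ups M_f$; the same theorem also gives the magnitude bounds $\|S(-\Delta tA)\hat{\bm U}^n\|_\infty\le C_0M$ and $\|\bm F^n\|_\infty\le\sum_j C_0'M_f$. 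The remaining operations --- multiplying $\bm F^n$ by $\Delta t$, forming the sum over the $s$ stages, and adding $\Delta t\bm F^n$ to $S(-\Delta tA)\hat{\bm U}^n$ --- each contribute a relative perturbation that I bound with the appropriate $\gamma_m$ factor applied to the magnitude bounds just listed. Collecting: every $\hat{\bm U}^n$-proportional contribution is $O(M\ups)$ and goes into $\bar C_1M\ups$, while every $\hat{\bm f}$-proportional contribution carries an explicit factor $\Delta t$ and goes into $\Delta t\bar C_2M_f\ups$, where $\bar C_1,\bar C_2$ absorb $s$, the $\gamma$-constants and the constants $C_0,C,C',\dots$ of Theorem~\ref{th:matrix_rational_function_bound}, hence depend on $s,d,\lambda,K$. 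If the RK method is explicit, $S,\tilde S,Q_j$ are polynomials, no linear solve is performed, and the $K$-dependence of Theorem~\ref{th:matrix_rational_function_bound} drops, leaving only a $\lambda$-dependence (through $\|\Delta tA\|_\infty=O(d\lambda)$).

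\textbf{The bound on $\Theta^n$.} From \eqref{eq:Theta} we have $\Theta^n_{\bm j}=\tilde\Theta^n_{\bm j}+\delta_{\bm j}(\Delta\hat{\bm U})^n_{\bm j}+O\!\left(\ups|\tilde\Theta^n_{\bm j}|\right)$, so, up to higher-order terms in $\ups$, $\|\Theta^n\|_\infty\le(1+\ups)\|\tilde\Theta^n\|_\infty+\ups\|(\Delta\hat{\bm U})^n\|_\infty$, and it suffices to bound the magnitude of $(\Delta\hat{\bm U})^n$ and the error $\tilde\Theta^n$ incurred in computing it. In delta form $(\Delta\hat{\bm U})^n=\tilde S(-\Delta tA)\big({-}\Delta tA\hat{\bm U}^n\big)+\Delta t\bm F^n$, evaluated by first forming $-\Delta tA\hat{\bm U}^n$ with the first-order differences of Section~\ref{subsec:matvecs}, then applying $\tilde S(-\Delta tA)$, then adding $\Delta t\bm F^n$. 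Assumption~(A3) gives $\|{-}\Delta tA\hat{\bm U}^n\|_\infty\le c_a\Delta t^p$ and a matvec error at most $c_a\Delta t^p\ups$; Theorem~\ref{th:matrix_rational_function_bound} then gives $\|\tilde S(-\Delta tA)({-}\Delta tA\hat{\bm U}^n)\|_\infty\le C_0c_a\Delta t^p$ and, accounting for both the inexact input and the error committed by the solver or polynomial evaluation, a total error $\le Cc_a\Delta t^p\ups$; finally $\|\Delta t\bm F^n\|_\infty\le\Delta t\sum_jC_0'M_f$ with error at most $\Delta t\bar C_2M_f\ups$ (exactly as in the $\EPS^n$ estimate). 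Since the relevant exponents are $p\in\{0,\tfrac12,1\}$ and $\Delta t\le1$, we have $\Delta t\le\Delta t^p$, so each term above is $O(\Delta t^p)$; this yields $\|(\Delta\hat{\bm U})^n\|_\infty\le\bar C\Delta t^p$ and, after the final addition contributes one more relative factor, $\|\tilde\Theta^n\|_\infty\le\bar C'\Delta t^p\ups$. Combining gives $\|\Theta^n\|_\infty\le\bar C_3(p,s,d,\lambda,K)\Delta t^p\ups$, with the same explicit/implicit remark on the $K$-dependence.

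\textbf{Main obstacle.} The only genuine difficulty is the bookkeeping: correctly composing the roundoffs of the elementary operations through the (possibly rational) matrix functions $S,\tilde S,Q_j$ --- which is precisely why the last sentence of Theorem~\ref{th:matrix_rational_function_bound}, on propagating an already-perturbed input vector, is needed --- and keeping the various smallness requirements on $\ups$ mutually consistent (so that $m\ups<1$ in the $\gamma_m$ bounds and the linear solver of Theorem~\ref{th:matrix_rational_function_bound} is in its stable regime). Once these are in place the rest is a sequence of triangle inequalities applied to the magnitude bounds supplied by Assumptions~(A1)--(A3) and Theorem~\ref{th:matrix_rational_function_bound}.
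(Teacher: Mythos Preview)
Your proposal is correct and follows essentially the same route as the paper: the $\eps^n$ bound is immediate from (A2); for $\EPS^n$ you decompose into $\bm z_0=S(-\Delta tA)\hat{\bm U}^n$ and the $s$ forcing pieces $\bm z_j=\Delta tQ_j(-\Delta tA)\bm f^n_j$, invoke Theorem~\ref{th:matrix_rational_function_bound} (including its inexact-input clause) for each, and absorb the final $(s{+}1)$-term summation into a $\gamma_{s+1}$ factor; for $\Theta^n$ you bound $\|(\Delta\hat{\bm U})^n\|_\infty$ and $\|\tilde\Theta^n\|_\infty$ separately via (A3) and Theorem~\ref{th:matrix_rational_function_bound} applied to $\tilde S$, then combine through \eqref{eq:Theta}. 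The only cosmetic difference is that you explicitly invoke $\Delta t\le\Delta t^p$ (valid for $p\le1$ and $\Delta t\le1$) to merge the $\Delta t M_f$ contribution into the $\Delta t^p$ term, whereas the paper simply absorbs this into the unspecified constant $\bar C_3$.
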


\begin{proof}
The bound on $\Vert\eps^n\Vert_\infty$ is a straightforward application of Assumption (A2). We first derive a bound for $\Vert\EPS^n\Vert_\infty$ and then we bound $\Vert\Theta^n\Vert_\infty$. Let $\tilde{\bm{U}}^{n+1}$ exactly satisfy \eqref{eq:num_scheme_finite_precision} with $\EPS^n$ set to zero:
\begin{align}
\label{eq:lemmaproof1}
\tilde{\bm{U}}^{n+1} = S(-\Delta tA)\widehat{\bm{U}}^{n} + \Delta t\bm{F}^n,
\end{align}
i.e.~$\tilde{\bm{U}}^{n+1}$ is the result of performing one timestep in exact arithmetic starting from $\widehat{\bm{U}}^{n}$.
We can then estimate $\EPS$ by computing an upper bound for the quantity
\begin{align}
\label{eq:lemmaproof2}
\Vert\mathlarger{\mathlarger{{\varepsilon}}}^n\Vert_\infty=\Vert\widehat{\bm{U}}^{n+1} - \tilde{\bm{U}}^{n+1}\Vert_\infty.
\end{align}

Let $\widehat{\bm{f}}_j^n = \bm{f}_j^n + (\Delta \bm{f}_j)^n$ for $j=1,\dots,s$ be the computed forcing terms due to finite precision and note that by Assumption (A1) we have $\Vert(\Delta \bm{f}_j)^n\Vert_\infty \leq C_fM_f\ups$. Let $\bm{z}_0=S(-\Delta t A)\widehat{\bm{U}}^{n}$, $\bm{y}=\tilde{S}(-\Delta t A)(-\Delta tA\widehat{\bm{U}}^{n})$ and $\bm{z}_j=\Delta tQ_j(-\Delta t A)\bm{f}_j^{n}$, and let $\widehat{\bm{z}}_j$ for $j=0,\dots,s$ and $\widehat{\bm{y}}$ be the result of computing the $\bm{z}_j$ and $\bm{y}$ in finite precision. Owing to Theorem \ref{th:matrix_rational_function_bound} and Assumptions (A1)-(A3), we obtain the upper bounds
\begin{align}
\label{eq:lemma_fakebound}
\begin{array}{ccc}
\Vert\widehat{\bm{z}}_0 - \bm{z}_0\Vert_\infty \leq C_1\ups M,&\Vert\widehat{\bm{z}}_j - \bm{z}_j\Vert_\infty \leq C_2 \Delta t\ups M_f, & \Vert\widehat{\bm{y}} - \bm{y}\Vert_\infty\leq C_3\Delta t^p\ups,\\
\hspace{11pt} \Vert\bm{z}_0\Vert_\infty \leq c_1M, & \hspace{11pt}\Vert\bm{z}_j\Vert_\infty \leq c_2\Delta t M_f, & \hspace{9pt} \Vert\bm{y}\Vert_\infty \leq c_3\Delta t^p,
\end{array}
\end{align}
where $c_1$, $c_2$, $c_3$, $C_1$, $C_2$ and $C_3$ are positive constants that depend on $s$, $d$ and $\lambda$ and might depend on $K$ for implicit methods. We first collect \eqref{eq:lemma_fakebound} into a bound for \eqref{eq:lemmaproof2}. We can write $\widehat{\bm{U}}^{n+1}$ as
\begin{align}
\widehat{\bm{U}}^{n+1} = \sum\limits_{j=0}^s\widehat{\bm{z}}_j\circ(1+\theta_j),\quad\text{where}\quad|\theta_j|\leq \gamma_{s}\bm{1}.
\end{align}
Here $\circ$ denotes the entrywise product. The vectors $\theta_j$ represent the error introduced by summing the $\widehat{\bm{z}}_j$ together in finite precision, cf.~\cite[Chapter 3]{higham2002accuracy}. Owing to \eqref{eq:lemma_fakebound} and Theorem \ref{th:matrix_rational_function_bound}, we have
\begin{align}
\label{eq:lemmaproof3}
\Vert\mathlarger{\mathlarger{{\varepsilon}}}^n\Vert_\infty &= \Vert\widehat{\bm{U}}^{n+1} - \tilde{\bm{U}}^{n+1}\Vert_\infty = \left|\left|\sum\limits_{j=0}^s\widehat{\bm{z}}_j\circ(1+\theta_j) - \sum\limits_{j=0}^s{\bm{z}}_j\right|\right|_\infty
\\
&\leq (1+\gamma_{s})\sum\limits_{j=0}^s \Vert\widehat{\bm{z}}_j - \bm{z}_j\Vert_\infty + \gamma_{s}\sum\limits_{j=0}^s\Vert\bm{z}_j\Vert_\infty\notag
\\
&\leq \bar{C}_1(s,d,\lambda,K) M\ups + \bar{C}_2(s,d,\lambda,K)\Delta t M_f \ups = \mathlarger{\mathlarger{{\varepsilon}}}.
\notag
\end{align}
This is the desired result. We omit the derivation of a close-form expression for $\bar{C}_1$ and $\bar{C}_2$.
%
The reasoning for $\Vert\Theta^n\Vert_\infty$ is similar. First we use \eqref{eq:lemma_fakebound} to bound
\begin{align}
\Vert\Delta \widehat{\bm{U}}\Vert_\infty \leq \Vert\bm{y}\Vert_\infty + \Delta t \sum\limits_{j=1}^s\Vert\bm{z}_j\Vert_\infty \leq c_3\Delta t^p+c_2\Delta tM_f.
\label{eq:_th_aux_1}
\end{align}
Following the same procedure as the one used to obtain \eqref{eq:lemmaproof3}, we get for $\Vert\tilde{\Theta}^n\Vert_\infty$ (cf.~\eqref{eq:Theta}),
\begin{align}
\Vert\tilde{\Theta}^n\Vert_\infty &\leq (1+\gamma_{s})\Vert\widehat{\bm{y}} - \bm{y}\Vert_\infty + (1+\gamma_{s})\sum\limits_{j=1}^s \Vert\widehat{\bm{z}}_j - \bm{z}_j\Vert_\infty + \gamma_{s}\left(\Vert\bm{y}\Vert_\infty + \sum\limits_{j=1}^s\Vert\bm{z}_j\Vert_\infty \right)
\notag\\
&\leq \tilde{C}_3(s,d,\lambda,K)\Delta t^p\ups + \bar{C}_2(s,d,\lambda,K) \Delta t M_f\ups,
\label{eq:_th_aux_2}
\end{align}
for some $\tilde{C}_3>0$ that does not depend on $K$ for explicit methods. The definition of $\Theta^n$, given entrywise in \eqref{eq:Theta}, implies that
\begin{align}
\label{eq:_th_aux_3}
\Vert\Theta^n\Vert_\infty\leq\Vert\tilde{\Theta}^n\Vert_\infty + \ups\Vert\Delta \widehat{\bm{U}}\Vert_\infty + \ups\Vert\tilde{\Theta}^n\Vert_\infty\Vert\Delta \widehat{\bm{U}}\Vert_\infty.
\end{align}
Pulling together equations \eqref{eq:_th_aux_1}, \eqref{eq:_th_aux_2}, and \eqref{eq:_th_aux_3} yields an upper bound for $\Vert\Theta^n\Vert_\infty$ and the desired result. We omit the derivation of a close-form expression for $\bar{C}_3$.
\end{proof}

\begin{remark}
	\label{rem:conditioning_lambda_blowup}
	The constants $\bar{C}_1$, $\bar{C}_2$ and $\bar{C}_3$ in Theorem \ref{th:eps_expl_impl} typically grow at least linearly with the condition number of $W(-\Delta t A)$, which is typically $O(\lambda)$. While it is well-known that $\lambda$ is bounded above for explicit methods due to stability reasons, this suggests that $\lambda$ must be kept small for implicit methods as well to avoid losing all accuracy.
\end{remark}


\subsection{Round-to-nearest causes stagnation}
\label{subsec:RtNstagnation}

As discussed in \cite[]{GuptaEtAl2015,Mikaitis2020,FasiMikaitis2020,ConnollyHighamMary2020} RtN computations are prone to stagnation. Explaining why this phenomenon occurs with RtN in an ODE/PDE context for small enough $\Delta t$ is simple. Let $x$ and $\epsilon$ be two exactly representable numbers. Then whenever the magnitude of $\epsilon$ is smaller than half the gap between $x$ and its representable neighbours, $x+\epsilon$ is rounded back to $x$. In formulas (cf.~Lemma 2.1 in \cite{higham2002accuracy}),
\begin{align}
\fl(x+\epsilon) = x,\quad\text{whenever}\quad \frac{u}{2}|x|\geq |\epsilon|.
\end{align}

We now consider stagnation in our numerical scheme. For simplicity, we only consider a delta form implementation of the RK method. Let $\uu(t,\bm{x})$ be the solution of \eqref{eq:main_eqn}, then stagnation will approximately occur when
\begin{align}
\label{eq:stagn_cond}
\frac{u}{2}|\uu(t_n,\bm{x}_{\bm{i}})| \approx \frac{u}{2}|\widehat{\bm{U}}^n_{\bm{i}}| \geq |\Delta \widehat{\bm{U}}^n_{\bm{i}}| = |\widehat{\bm{U}}^{n+1}_{\bm{i}}-\widehat{\bm{U}}^n_{\bm{i}}| \approx \Delta t |\dot{\uu}(t_n,\bm{x}_{\bm{i}})|.
\end{align}
This shows that $\widehat{\bm{U}}^n_{\bm{i}}$ will not be updated whenever
\begin{align}
\label{eq:rough_stagn_cond}
|\uu(t_n,\bm{x}_{\bm{i}})| \gtrapprox 2(\Delta t/u)|\dot{\uu}(t_n,\bm{x}_{\bm{i}})|.
\end{align}
This gives a condition which is clearly discretization-dependent and shows that stagnation will lead to different discrete solutions for different values of $\Delta t$ (hence $h$) and the unit roundoff $u$. According to \eqref{eq:rough_stagn_cond}, when a non-zero steady-state solution exists stagnation always occurs, since $|\dot{\uu}(t,\bm{x})|\rightarrow 0$ as $t\rightarrow\infty$. Furthermore, if $\uu(0,x),\dot{\uu}(0,x)\neq 0$ the discrete solution will always stagnate at the initial condition for $\Delta t$ small enough.

Overall, this simple argument already shows that the steady-state solution obtained with RtN is discretization, initial condition and precision dependent. This argument can be made rigorous, but we believe that the fact that stagnation occurs in this setting is both obvious and well known in the community, so we omit further details. We remark that not using the delta form only worsened the stagnation behavior in numerical experimentations.

\section{Global rounding error analysis}
\label{sec:global}

We now derive expressions for the global rounding errors in terms of $\EPS$, $\theta$, and $\varepsilon$. From Theorem \ref{th:eps_expl_impl} we know that the worst-case error bounds $|\EPS^n_{\bm{i}}|\leq \EPS$, $|\eps^n_{\bm{i}}|\leq \varepsilon$, and $|\Theta^n_{\bm{i}}|\leq \theta$ hold. However, more precise bounds can be obtained for SR owing to the following lemma:
\begin{lemma}
	\label{lemma:epsSR_properties}
	With SR we have that $\eps^n_{\bm{i}}$ is independent from $\eps^n_{\bm{j}}$ for all $n,{\bm{i}}\neq {\bm{j}}$. Furthermore, $\eps^n_{\bm{i}}$ satisfies $\E[\eps^n_{\bm{i}}] = \E\left[\eps^n_{\bm{i}}\left|\left\{\{\eps^m_{\bm{j}}\}_{{\bm{j}}=\bm{1}}^{{\bm{j}}=(K-1)\bm{1}}\right\}_{m=1}^{n-1}\right.\right]=0$ for all $n,{\bm{i}}$.
\end{lemma}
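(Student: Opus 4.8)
The plan is to read off the structure $\eps^n_{\bm{j}} = \delta^n_{\bm{j}}\,\hat{\bm{U}}^n_{\bm{j}}$ established in \eqref{eq:Theta}, where $\delta^n_{\bm{j}}$ is the relative SR roundoff committed in the \emph{final} addition $\hat{\bm{U}}^{n+1}_{\bm{j}}=\sr\!\big(\hat{\bm{U}}^n_{\bm{j}}+((\Delta\hat{\bm{U}})^n_{\bm{j}}+\tilde{\Theta}^n_{\bm{j}})\big)$ at timestep $n$, and to push the mean-independence of SR roundoffs (Lemma \ref{lemma:sr_properties}) through a carefully chosen filtration. First I would fix an explicit ordering of all the roundoffs produced by the algorithm: group them timestep by timestep, and within timestep $n$ put the $(K-1)^d$ final-addition roundoffs $\{\delta^n_{\bm{j}}\}_{\bm{j}}$ \emph{last} — this is legitimate precisely because, in delta form, $\Delta\hat{\bm{U}}^n$ is computed in full before being added to $\hat{\bm{U}}^n$. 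Let $\F_n^{\mathrm{pre}}$ denote the $\sigma$-algebra generated by all roundoffs that precede the block $\{\delta^n_{\bm{j}}\}_{\bm{j}}$, i.e.\ all roundoffs from timesteps $1,\dots,n-1$ together with those incurred while forming $\Delta\hat{\bm{U}}^n$.

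Two measurability facts do the work. First, both $\hat{\bm{U}}^n$ (the state at the start of timestep $n$) and the real number $v^n_{\bm{j}}$ fed to the $\bm{j}$-th final $\sr$ are $\F_n^{\mathrm{pre}}$-measurable. Second, each $\delta^n_{\bm{j}}$ is a deterministic function of $v^n_{\bm{j}}$ and of a single fresh uniform random draw $r^n_{\bm{j}}$, with the draws $\{r^n_{\bm{j}}\}$ mutually independent and independent of everything preceding them. For the independence claim I would condition on $\F_n^{\mathrm{pre}}$: under this conditioning the numbers $\hat{\bm{U}}^n_{\bm{j}}$ and $v^n_{\bm{j}}$ are deterministic, so $\eps^n_{\bm{j}}=\hat{\bm{U}}^n_{\bm{j}}\,\delta^n_{\bm{j}}$ is a deterministic function of $r^n_{\bm{j}}$ alone, and distinct positions use distinct independent draws; hence $\{\eps^n_{\bm{j}}\}_{\bm{j}}$ are independent given $\F_n^{\mathrm{pre}}$, and in particular $\eps^n_{\bm{i}}$ and $\eps^n_{\bm{j}}$ are independent for $\bm{i}\neq\bm{j}$. (This is the natural reading of the statement — equivalently, the two roundoffs are driven by independent draws — and it is exactly the conditional independence used for the variance-of-sums computations in Section \ref{sec:global}.)

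For the mean statements, apply Lemma \ref{lemma:sr_properties} to the final-addition roundoffs in the above ordering: everything preceding $\delta^n_{\bm{j}}$ contains $\F_n^{\mathrm{pre}}$, so the tower property gives $\E[\delta^n_{\bm{j}}\mid\F_n^{\mathrm{pre}}]=0$; multiplying by the $\F_n^{\mathrm{pre}}$-measurable factor $\hat{\bm{U}}^n_{\bm{j}}$ yields $\E[\eps^n_{\bm{j}}\mid\F_n^{\mathrm{pre}}]=0$, and taking expectations gives $\E[\eps^n_{\bm{j}}]=0$. Finally, since any $\eps^m_{\bm{k}}$ with $m\leq n-1$ is a function only of roundoffs from timesteps $\leq m$, the $\sigma$-algebra generated by $\{\{\eps^m_{\bm{k}}\}_{\bm{k}}\}_{m=1}^{n-1}$ is contained in $\F_n^{\mathrm{pre}}$; one more use of the tower property then delivers the claimed mean-independence $\E\!\big[\eps^n_{\bm{i}}\,\big|\,\{\{\eps^m_{\bm{j}}\}_{\bm{j}}\}_{m=1}^{n-1}\big]=0$.

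The main thing that needs care is the filtration bookkeeping rather than any estimate. One must verify that the final-addition roundoffs really are the last roundoffs within each timestep, so that Lemma \ref{lemma:sr_properties} applies with ``everything before $\delta^n_{\bm{j}}$'' containing $\F_n^{\mathrm{pre}}$; and one must notice that $\hat{\bm{U}}^n$ depends on \emph{all} prior roundoffs — the $\Theta$-roundoffs as well as the $\eps$-roundoffs — which is why the mean-independence has to be proved first against the larger $\sigma$-algebra $\F_n^{\mathrm{pre}}$ and only then transferred to the smaller $\eps$-generated one via the tower property. The edge case $v^n_{\bm{j}}=0$ (where $\delta^n_{\bm{j}}=0$ trivially) and the analogous setup for the plain implementation are immediate variants. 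Everything else is routine.
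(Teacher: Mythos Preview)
Your proposal is correct and follows essentially the same route as the paper, whose proof is a one-liner: ``The thesis is a consequence of the definition of $\eps^n$ and of the statistical properties of the SR roundoffs, cf.~Lemma~\ref{lemma:sr_properties}.'' Your version simply fleshes out the filtration bookkeeping that the paper leaves implicit, and your parenthetical remark about conditional versus unconditional independence is a useful clarification of what the statement actually means and how it is used downstream.
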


\begin{proof}
	The thesis is a consequence of the definition of $\eps^n$ and of the statistical properties of the SR roundoff errors, cf.~Lemma \ref{lemma:sr_properties}.
\end{proof}

\noindent We can now distinguish two different scenarios:
\begin{enumerate}
	\item[1)] \textbf{Worst-case (RtN):} we can only assume that $|\eps^n_i|\leq \varepsilon$ for all $n,i$.
	\item[2)] \textbf{Stochastic rounding:} Lemma \ref{lemma:epsSR_properties} holds. 
\end{enumerate}
\begin{remark}
	\label{rem:full_error}
	From now on we just investigate the global error deriving from $\eps^n$ for simplicity. However, the error bounds obtained also apply to the global errors deriving from the other error terms after making the following substitutions:
	\begin{itemize}
		\item Scheme not in delta form (both RtN and SR): simply replace $\varepsilon$ with $\EPS$ in the results from Scenario 1).
		\item Scheme in delta form, RtN: the total global error is obtained by replacing $\varepsilon$ with $\varepsilon+\theta$ in the results from Scenario 1).
		\item Scheme in delta form, SR: the total global error consists of two additive terms. The former is given by Scenario 2) and the latter is given by replacing $\varepsilon$ with $\theta$ in Scenario 1). While our theory does not show that the $\theta$ error term is negligible for small $\Delta t$, the effects of this second error term do not actually appear in numerical experimentation (cf.~Section \ref{sec:num_res}).
	\end{itemize}
\end{remark}
\begin{remark}
	\label{rem:exact_IC}
	In this section we assume for simplicity that the initial condition can be represented exactly so that $\widehat{\bm{U}}^0 = \bm{U}^0$ and $\EPS^{-1}=\eps^{-1}=\bm{0}$.
\end{remark}

We present a summary of the global error asymptotic rates derived in this section for fixed $\lambda$ in Table \ref{tab:rates}. We use both the infinity norm and the discrete $L^2$ norm, defined as $\Vert\cdot\Vert_{L^2}=K^{-d/2}\Vert\cdot\Vert_2$. The Scenario 1) infinity norm results for $d>1$ depend on the assumption that the matrix $S(-\Delta t A)$ only has non-negative entries. This requirement also implies that a discrete maximum principle holds and is automatically satisfied for Forward Euler and the popular RK4 method in their stability region, for Backward Euler for all $\lambda$, and for the Crank-Nicolson method for $\lambda\leq 1/d$.

\begin{table}[h!]
	\centering
	\begin{tabular}{lclll}
		\toprule
		\multicolumn{1}{c}{Scenario} & \multicolumn{1}{c}{Norm} & \multicolumn{1}{c}{1D}          & \multicolumn{1}{c}{2D}                                & \multicolumn{1}{c}{3D}          \\ \midrule
		1) RtN      &     $\infty$                 & $O(\varepsilon\Delta t^{-1})$   & $O(\varepsilon\Delta t^{-1}\ell(\Delta t)^s)$         & $O(\varepsilon\Delta t^{-1-s/2})$ \\
		1) RtN     & $L^2$                          & $O(\varepsilon\Delta t^{-1})$   & $O(\varepsilon\Delta t^{-1})$                      & $O(\varepsilon\Delta t^{-1})$ \\
		2) SR            & $\infty$                   & $O(\varepsilon\Delta t^{-1/4}\ell(\Delta t)^{1/2})$ & $O(\varepsilon \ell(\Delta t))$                 & $O(\varepsilon \ell(\Delta t)^{1/2})$                \\ 
		2) SR         & $L^2$                      & $O(\varepsilon\Delta t^{-1/4})$ & $O(\varepsilon \ell(\Delta t)^{1/2})$ & $O(\varepsilon)$ \\ \bottomrule
	\end{tabular}
	\caption{Asymptotic global rounding error blow-up rates for fixed $\lambda$ in the infinity and discrete $L^2$ norm. Here $\ell(\Delta t)=|\log(\lambda^{-1}\Delta t)|$. The error measures used for the worst-case scenario are either  $\Vert\bm{E}^n\Vert_\infty$ or $\Vert\bm{E}^n\Vert_{L^2}$. The value of $s$ is $0$ if $S(-\Delta t A)$ only has non-negative entries and $s=1$ otherwise. For the SR case the error measures are either $\E[\Vert\bm{E}^n\Vert^2_\infty]^{1/2}$ or $\E[\Vert\bm{E}^n\Vert_{L^2}^2]^{1/2}$.
	}
	\label{tab:rates}
	\vspace{0pt}
\end{table}

\begin{remark}
	For ease of notation, in this section we write e.g.~$\sum_{\bm{k}}$ in place of $\sum_{\bm{k}=\bm{1}}^{\bm{k}=(K-1)\bm{1}}$ to indicate a sum over all indices of the spatial degrees of freedom.
\end{remark}
Some preliminary analysis is needed before we can go over each scenario. We start by subtracting the exact delta form scheme \eqref{eq:num_scheme} from the approximate \eqref{eq:num_scheme_finite_precision_delta} and we set $\Theta^n=0$ to get a recursive relation for the global rounding error $\bm{E}^n=\widehat{\bm{U}}^n-\bm{U}^n$ deriving from $\eps^n$:
\begin{align}
\label{eq:num_scheme_global_error}
\bm{E}^{n+1} = S(-\Delta tA)\bm{E}^{n} + \eps^n.
\end{align}
The first step is to expand $\bm{E}^n$ in terms of the eigenvectors of $A$ (cf.~\eqref{eq:eigenpairsA}). We write $\bm{E}^n = \sum_{\bm{k}}a_{\bm{k}}^n\bm{v}_{\bm{k}}^h$ and we plug this into the above to obtain
\begin{align}
\sum_{\bm{k}}a_{\bm{k}}^{n+1}\bm{v}_{\bm{k}}^h=\sum_{\bm{k}}a_{\bm{k}}^nS(s_{\bm{k}})\bm{v}_{\bm{k}}^h + \eps^n,
\end{align}
where $s_{\bm{k}}=-\Delta t \lambda_{\bm{k}}^h$. Multiplying both sides by $(\bm{v}_{\bar{\bm{k}}}^h)^T$ we get, due to orthogonality and \eqref{eq:eigenpairsA_bounds},
\begin{align}
a_{\bar{\bm{k}}}^{n+1}=a_{\bar{\bm{k}}}^nS(s_{\bm{\bar{k}}}) + \left(\frac{2}{K}\right)^d(\eps^n,\bm{v}_{\bar{\bm{k}}}^h),
\end{align}
where we write $(\bm{a},\bm{b})=\bm{a}^T\bm{b}$ for any two vectors $\bm{a}$, $\bm{b}$. We now drop the bar in $\bar{\bm{k}}$. Since $\bm{E}^0=\bm{0}$ by assumption (cf.~Remark \ref{rem:exact_IC}), we have $a_{\bm{k}}^0=0$ for all $\bm{k}$ and we obtain
\begin{align}
a^n_{\bm{k}}=\left(\frac{2}{K}\right)^d\ \sum\limits_{m=0}^{n-1}S(s_{\bm{k}})^{n-1-m}(\eps^m,\bm{v}_{\bm{k}}^h).
\end{align}
Before working through the various correlation assumptions it is useful to prove the following theorem, as it plays an important role in the whole section.
\begin{theorem}
	\label{thm:Sseriesbounds}
	Let $\mathcal{A}=\{z\leq 0 : |S(z)| < 1\}$, $\Lambda = (-4d \lambda,0)$, and assume that $\lambda$ is chosen such that $\left(\Lambda \cup \{-4 d \lambda\}\right) \subset \mathcal{A}$. Further assume that $S(z)$ has no poles on the negative real axis. Then, there exist constants $C(d,\lambda)>0,\bar{C}(d,\lambda),\tilde{C}(d,\lambda)>0$ such that
	\begin{align}
	\label{eq:Sseriesbounds1}
	 \sum_{\bm{k}}\dfrac{1}{1-S(s_{\bm{k}})^2} \leq \sum_{\bm{k}}\dfrac{1}{1-|S(s_{\bm{k}})|}&\leq
	 \phi_d(\Delta t)=\left\{
	 \begin{array}{lr}
	 C(1,\lambda)\Delta t^{-1}, & d=1,\\
	 C(2,\lambda)\Delta t^{-1}|\log(\lambda^{-1}\Delta t)|, & d=2,\\
	 C(3,\lambda)\Delta t^{-3/2}, & d=3,\\
	 \end{array}
	 \right.\\
	 \sum_{\bm{k}}\dfrac{1}{(1-|S(s_{\bm{k}})|)^2} &\leq \bar{C}(d,\lambda)^2\Delta t^{-2},
	 \label{eq:Sseriesbounds2}
	 \\
	 K^{-d}\sum_{\bm{k}}\dfrac{(\bm{1},\bm{v}^h_{\bm{k}})}{1-|S(s_{\bm{k}})|} &\leq \tilde{C}(d,\lambda)\Delta t^{-1}.
	 \label{eq:Sseriesbounds3}
	\end{align}
\end{theorem}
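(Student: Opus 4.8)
The plan is to reduce all three estimates to two ingredients: a lower bound of the form $1-|S(z)|\geq c|z|$ on an interval containing the range of the eigenvalues $s_{\bm{k}}$, and elementary estimates for multi-dimensional lattice sums. The first inequality in \eqref{eq:Sseriesbounds1} is immediate: since $s_{\bm{k}}\in\Lambda\subset\mathcal{A}$ we have $|S(s_{\bm{k}})|<1$, hence $1-S(s_{\bm{k}})^2=(1-|S(s_{\bm{k}})|)(1+|S(s_{\bm{k}})|)\geq 1-|S(s_{\bm{k}})|>0$, so it suffices to bound the three sums with $1-|S(s_{\bm{k}})|$ in the denominator.

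The key lemma I would prove first is: there is a constant $c=c(d,\lambda)>0$, also depending on the RK method, such that $1-|S(z)|\geq c|z|$ for all $z\in[-4d\lambda,0)$. For $|z|$ small this follows from a Taylor expansion: writing $S(z)=1+z\tilde{S}(z)$ with $\tilde{S}(0)=1$ and using that $S$ has no poles on the negative axis (so it is smooth there), one gets $S(z)=1+z+O(z^2)$, whence for $z\in[-\delta,0)$ with $\delta$ small enough $S(z)\in(0,1)$ and $1-|S(z)|=1-S(z)\geq|z|-\tfrac12 C''|z|^2\geq\tfrac12|z|$, where $C''=\max_{[-\delta,0]}|S''|$. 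On the remaining compact interval $[-4d\lambda,-\delta]$ --- contained in $\mathcal{A}$ precisely because the hypothesis assumes $-4d\lambda\in\mathcal{A}$ --- continuity and compactness give $\max_{[-4d\lambda,-\delta]}|S|=1-\eta$ for some $\eta>0$, and since $|z|\leq 4d\lambda$ there we get $1-|S(z)|\geq\eta\geq(\eta/4d\lambda)|z|$; taking $c$ the minimum of the two constants proves the lemma. This is the step where the precise hypothesis $(\Lambda\cup\{-4d\lambda\})\subset\mathcal{A}$ is essential, and I expect it to be the main obstacle: without $-4d\lambda\in\mathcal{A}$ the highest modes could have $|S(s_{\bm{k}})|\rightarrow 1$ as $K\rightarrow\infty$, which would destroy all three bounds.

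Next I would exploit that $\lambda=\Delta tK^2$, so $s_{\bm{k}}=-4\lambda\sum_{j=1}^d\sin^2(\pi k_j/2K)$; the inequality $\sin x\geq(2/\pi)x$ on $[0,\pi/2]$ yields $\tfrac{4\lambda}{K^2}||\bm{k}||_2^2\leq|s_{\bm{k}}|\leq 4d\lambda$, so the lemma applies and $\frac{1}{1-|S(s_{\bm{k}})|}\leq\frac{1}{c|s_{\bm{k}}|}\leq\frac{K^2}{4c\lambda\,||\bm{k}||_2^2}$. Hence \eqref{eq:Sseriesbounds1} and \eqref{eq:Sseriesbounds2} reduce to bounding $\Sigma_{d,p}:=\sum_{\bm{k}=\bm{1}}^{(K-1)\bm{1}}||\bm{k}||_2^{-2p}$ for $p=1,2$: a dyadic (or integral) comparison gives $\Sigma_{d,1}=O(1),\,O(\log K),\,O(K)$ for $d=1,2,3$, and $\Sigma_{d,2}=O(1)$ for all $d\leq 3$ (the lattice sum $\sum_{\bm{k}}||\bm{k}||_2^{-4}$ converges for $d\leq 3$); substituting $K^2=\lambda\Delta t^{-1}$ and $\log K=\tfrac12|\log(\lambda^{-1}\Delta t)|$ yields exactly $\phi_d(\Delta t)$ in \eqref{eq:Sseriesbounds1} and the $O(\Delta t^{-2})$ bound in \eqref{eq:Sseriesbounds2}. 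For \eqref{eq:Sseriesbounds3} I would use the separable form \eqref{eq:eigenpairsA} to write $(\bm{1},\bm{v}^h_{\bm{k}})=\prod_{j=1}^d\big(\sum_{i=1}^{K-1}\sin(\pi ik_j/K)\big)$ together with the closed form $\sum_{i=1}^{K-1}\sin(\pi ik/K)=\cot(\pi k/2K)$ for $k$ odd and $=0$ for $k$ even, so $0\leq(\bm{1},\bm{v}^h_{\bm{k}})\leq\prod_j\cot(\pi k_j/2K)\leq\prod_j\tfrac{2K}{\pi k_j}$; inserting this and the eigenvalue bound, the $K^{-d}$ cancels the $K^d$ coming from $\prod_j\tfrac{2K}{\pi k_j}$ and one is left with $\tfrac{(2/\pi)^dK^2}{4c\lambda}\sum_{\bm{k}}\frac{1}{(\prod_jk_j)||\bm{k}||_2^2}$, and the bound $||\bm{k}||_2^2\geq d(\prod_jk_j)^{2/d}$ from AM--GM turns the sum into $\tfrac1d\big(\sum_kk^{-1-2/d}\big)^d=O(1)$, giving $O(K^2/\lambda)=O(\Delta t^{-1})$ as claimed.

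Besides the lower-bound lemma, the only part needing real care is the $d=2$ lattice sum, where the logarithm must be extracted cleanly: splitting $\sum_{k_1,k_2}(k_1^2+k_2^2)^{-1}$ into the diagonal part (which is $O(1)$) and the part $k_1<k_2$, bounded by $\sum_{k_2}(k_2-1)k_2^{-2}=O(\log K)$, does it; the $d=1,3$ cases and the $p=2$ sums are straightforward convergence/comparison arguments, and the remaining work is just assembling the $\lambda$-dependent constants, which I would not write out explicitly.
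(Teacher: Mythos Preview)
Your argument is correct and follows the same skeleton as the paper's: establish a lower bound on $1-|S(z)|$ via Taylor expansion near $z=0$ plus compactness on the rest of $[-4d\lambda,0]$, then reduce everything to lattice sums $\sum_{\bm{k}}||\bm{k}||_2^{-2p}$. Two differences are worth flagging. First, you package the lower bound as a single uniform estimate $1-|S(z)|\geq c|z|$ on the whole interval, whereas the paper keeps the split explicit, bounding the ``tail'' terms (those with $|s_{\bm{k}}|>s_0$) by $(K-1)^d/\alpha_0$; your version is a bit cleaner and avoids carrying the extra $(K-1)^d$ term through the argument. Second, and more substantively, your treatment of \eqref{eq:Sseriesbounds3} is genuinely different: after inserting $(\bm{1},\bm{v}^h_{\bm{k}})\leq(2K/\pi)^d\prod_jk_j^{-1}$ you use AM--GM ($||\bm{k}||_2^2\geq d(\prod_jk_j)^{2/d}$) to get a fully separable sum $\big(\sum_kk^{-1-2/d}\big)^d$, whereas the paper applies Cauchy--Schwarz to split $\sum_{\bm{k}}(\bm{1},\bm{v}^h_{\bm{k}})||\bm{k}||_2^{-2}$ into $(\sum_{\bm{k}}(\bm{1},\bm{v}^h_{\bm{k}})^2)^{1/2}(\sum_{\bm{k}}||\bm{k}||_2^{-4})^{1/2}$ and then handles the two factors separately. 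Your route is more direct and avoids the extra lemma on $\sum_{\bm{k}}||\bm{k}||_2^{-4}$; the paper's route reuses machinery already needed for \eqref{eq:Sseriesbounds2}. Both give the same $O(\Delta t^{-1})$ conclusion.
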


\begin{proof}
	See \ref{sec:appB}.
\end{proof}

\noindent
We now derive estimates for the global error for both scenarios.

\subsection{Worst-case error (round-to-nearest)}
In this case, the only thing we know is that $|\eps^n_i|\leq \varepsilon$ for all $n,i$. We have
\begin{align*}
|a^n_{\bm{k}}|\leq 2^d\varepsilon\left(\frac{K-1}{K}\right)^d\ \sum\limits_{m=0}^{n-1}|S(s_{\bm{k}})|^{n-1-m}\leq 2^d\dfrac{\varepsilon}{1-|S(s_{\bm{k}})|},
\end{align*}
since for stability $|S(s_{\bm{k}})| < 1$. Let $\bm{a}^n\in\R^{(K-1)^d}$ be such that $(\bm{a}^n)_{\bm{k}}=a^n_{\bm{k}}$. We then have that (cf.~\eqref{eq:eigenpairsA_bounds} and Theorem \ref{thm:Sseriesbounds})
\begin{align*}
\Vert\bm{E}^n\Vert_\infty&\leq\sum_{\bm{k}}|a_{\bm{k}}^n|\leq 2^d\varepsilon\ \sum_{\bm{k}}\dfrac{1}{1-|S(s_{\bm{k}})|}\leq 2^d \varepsilon\phi_d(\Delta t),
\\
\Vert\bm{E}^n\Vert_{L^2}^2&=K^{-d}\left(\frac{K}{2}\right)^d\ \Vert\bm{a}^n\Vert_2^2\leq \varepsilon^22^d\ \sum_{\bm{k}}\dfrac{1}{(1-|S(s_{\bm{k}})|)^2}\leq 2^d\bar{C}(d,\lambda)^2\varepsilon^2\Delta t^{-2},\\
\Rightarrow \Vert\bm{E}^n\Vert_{L^2}&\leq 2^{d/2}\bar{C}(d,\lambda)\varepsilon\Delta t^{-1}.
\end{align*}
Here we used the equality $\Vert\bm{E}^n\Vert_2^2=(K/2)^d \Vert\bm{a}^n\Vert_2^2$ which comes from the fact that the $\bm{v}^h_{\bm{k}}$ are orthogonal. The function $\phi_d(\Delta t)$ was defined in \eqref{eq:Sseriesbounds1}: $\phi_1(\Delta t)=C(1,\lambda)\Delta t^{-1}$, $\phi_2(\Delta t)=C(2,\lambda)\Delta t^{-1}|\log(\lambda^{-1}\Delta t)|$, $\phi_3(\Delta t)=C(3,\lambda)\Delta t^{-3/2}$.

The infinity norm estimate in the 2D and 3D case can be refined when the matrix $S(-\Delta t A)$ has only non-negative entries. In this case, a discrete maximum (minimum) principle holds and for an arbitrary sequence of non-negative (respectively non-positive) vectors $\bm{f}^0,\dots,\bm{f}^n$ we have that the vector sequence $\bm{W}^n$ satisfying
\begin{align}
\label{eq:tempW}
\bm{W}^{n+1} = S(-\Delta t A)\bm{W}^{n} + \bm{f}^n=\sum\limits_{m=0}^{n}S(-\Delta t A)^m\bm{f}^m,
\end{align}
with $\bm{W}^0=\bm{0}$ is also non-negative (respectively non-positive). Now, let $\bm{M}^n$ be the solution of \eqref{eq:num_scheme_global_error} in which $\eps^n$ is replaced by $\varepsilon\bm{1}$. We then have that $\bm{E}^n-\bm{M}^n$ and $\bm{M}^n + \bm{E}^n$ satisfy equation \eqref{eq:tempW} with right hand sides that are non-positive and non-negative respectively, giving $\bm{E}^n-\bm{M}^n\leq 0$ and $\bm{M}^n + \bm{E}^n\geq 0$. This gives $|\bm{E}^n|\leq \bm{M}^n$ and
\begin{align*}
|\bm{E}^n_{\bm{j}}| &\leq \bm{M}^n_{\bm{j}} = \varepsilon\left(\frac{2}{K}\right)^d\sum_{\bm{k}}\sum_{m=0}^{n-1} S(s_{\bm{k}})^{n-1-m}(\bm{1},\bm{v}^h_{\bm{k}})\bm{v}^h_{\bm{k},{\bm{j}}}\leq \varepsilon 2^dK^{-d}\sum_{\bm{k}}\frac{(\bm{1},\bm{v}^h_{\bm{k}})}{1-|S(s_{\bm{k}})|}\\
&\leq \varepsilon 2^d\tilde{C}(d,\lambda)\Delta t^{-1},
\end{align*}
where we used \eqref{eq:eigenpairsA_bounds} in the second step and Theorem \ref{thm:Sseriesbounds} in the third. Since the bound is independent from $j$ this is also a valid bound for $\Vert\bm{E}^n\Vert_{\infty}$.

We remark that the fact that RtN rounding errors grow like $O(u\Delta t^{-1})$ is well-known in an ODE context \cite[]{Henrici1962,Henrici1963}. This error growth rate was also more recently observed by \cite{Jezequel1995}, where the author studies the propagation of rounding errors in the solution of the heat equation via the forward Euler method with RtN and towards-zero rounding.


\subsection{Stochastic rounding error}
Here $\E[\eps^n_{\bm{i}}]=0$ for all $n,{\bm{i}}$, and $\E[\eps^n_{\bm{i}}\eps^k_{\bm{j}}]=0$ unless $n=k$ and ${\bm{i}}={\bm{j}}$ due to mean-independence, cf.~Lemma \ref{lemma:epsSR_properties}. We therefore have that owing to \eqref{eq:eigenpairsA_bounds},
\begin{align*}
\E[(\eps^m,\bm{v}_{\bm{k}}^h)^2]=\sum\limits_{{\bm{i}},{\bm{j}}}\E[\eps^m_{\bm{i}}\eps^{m}_{\bm{j}}](\bm{v}_{\bm{k}}^h)_{\bm{i}}(\bm{v}_{\bm{k}}^h)_{\bm{j}} = \eps^2\Vert\bm{v}_{\bm{k}}\Vert_2^2=\eps^2\left(\frac{K}{2}\right)^d.
\end{align*}
This yields,
\begin{align*}
\E[\Vert\bm{E}^n\Vert_2^2]&=\left(\frac{K}{2}\right)^d \E[\Vert\bm{a}^n\Vert_2^2]=\left(\frac{2}{K}\right)^{d}\sum\limits_{\bm{k}}\sum\limits_{m}^{n-1}S(s_{\bm{k}})^{2(n-1-m)}\E[(\eps^m,\bm{v}^h_{\bm{k}})^2] \notag \\
&\leq\varepsilon^2\sum\limits_{\bm{k}}\frac{1}{1-S(s_{\bm{k}})^2}\leq \varepsilon^2\phi_d(\Delta t),\\
\Rightarrow\quad \E[\Vert\bm{E}^n\Vert_{L^2}^2]^{1/2}&\leq \varepsilon K^{-d/2}\sqrt{\phi_d(\Delta t)}=\varepsilon \lambda^{-d/4}\Delta t^{d/4}\sqrt{\phi_d(\Delta t)}.
\end{align*}
We have again used Theorem \ref{thm:Sseriesbounds}. The bound is $O(\Delta t^{-1/4})$ in 1D, $O(|\log(\lambda^{-1}\Delta t)|^{1/2})$ in 2D and $O(1)$ in 3D for fixed $\lambda$.

Obtaining a bound for $\E[\Vert\bm{E}^n\Vert^2_\infty]^{1/2}$ is more difficult and requires the combined use of the concept of a martingale, Azuma-Hoeffding inequality and a maximal inequality for sub-Gaussian random variables. We start by defining martingales:
\begin{definition}[Martingale]
	A sequence of random variables $\{X_n\}_{n=0}^N$ is a martingale if $\E[|X_n|]<\infty$ and $\E[X_n|X_0,\dots,X_{n-1}]=X_{n-1}$.
\end{definition}
Note that this is not a complete definition of a martingale, but it is sufficient for our purposes; see Section 2.3 in \cite{KloedenPlaten2013} for a more complete definition. Azuma-Hoeffding inequality reads:
\begin{theorem}[Azuma-Hoeffding inequality (\cite{Hoeffding1963,Azuma1967})]
	\label{lemma:Azuma_ineq_standard}
	For a martingale $\{X_n\}_{n=0}^N$ with ${|X_n-X_{n-1}|\leq c_n}$ a.s.,
	\begin{align}
	\P(|X_N-X_0| > t) < 2\exp\left(-\dfrac{t^2}{2\sum_{n=1}^Nc_n^2}\right).
	\end{align}
\end{theorem}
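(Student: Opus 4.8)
The plan is to prove the inequality by the classical exponential-moment (Chernoff) method combined with a conditional version of Hoeffding's lemma, following \cite{Hoeffding1963,Azuma1967}. Without loss of generality I would assume $X_0=0$, since otherwise one replaces $X_n$ by $X_n-X_0$, which is still a martingale with the same increment bounds. Write $D_n=X_n-X_{n-1}$ for the martingale differences, so that the hypotheses give $|D_n|\leq c_n$ a.s. and $\E[D_n\mid X_0,\dots,X_{n-1}]=0$.

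The key step is a conditional Hoeffding bound: if $Y$ satisfies $\E[Y\mid\mathcal{G}]=0$ and $|Y|\leq c$ a.s., then $\E[e^{sY}\mid\mathcal{G}]\leq e^{s^2c^2/2}$ for all $s\in\R$. This follows from convexity of $y\mapsto e^{sy}$: writing $y$ as the convex combination $\tfrac{c-y}{2c}(-c)+\tfrac{c+y}{2c}c$ gives the pointwise bound $e^{sy}\leq\tfrac{c-y}{2c}e^{-sc}+\tfrac{c+y}{2c}e^{sc}$ on $[-c,c]$; taking conditional expectation, the linear term vanishes because $\E[Y\mid\mathcal{G}]=0$, leaving $\cosh(sc)$, and one finishes with the elementary inequality $\cosh(x)\leq e^{x^2/2}$ (compare power series term by term).

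Next I would apply Markov's inequality in exponential form: for $s>0$,
\[
\P(X_N>t)=\P\!\left(e^{sX_N}>e^{st}\right)\leq e^{-st}\,\E\!\left[e^{s\sum_{n=1}^N D_n}\right].
\]
Conditioning on $X_0,\dots,X_{N-1}$ and pulling out the factor $e^{s\sum_{n=1}^{N-1}D_n}$ (which is measurable), the conditional Hoeffding bound applied to $D_N$ gives $\E[e^{sD_N}\mid X_0,\dots,X_{N-1}]\leq e^{s^2c_N^2/2}$; iterating this peeling for $n=N,N-1,\dots,1$ via the tower property yields $\E[e^{sX_N}]\leq e^{(s^2/2)\sum_{n=1}^N c_n^2}$. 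Optimizing $e^{-st+(s^2/2)\sum_n c_n^2}$ over $s>0$ --- the minimizer is $s=t/\sum_n c_n^2$ --- gives $\P(X_N>t)\leq\exp\!\big(-t^2/(2\sum_n c_n^2)\big)$. Applying the same argument to the martingale $-X_n$, which has the same increment bounds, controls $\P(X_N<-t)$, and a union bound produces the stated two-sided estimate with the factor $2$.

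The only real subtlety --- and the step I would be most careful with --- is the conditional Hoeffding lemma together with the iterated-conditioning bookkeeping: one must apply the convexity inequality pointwise in $y$ and then integrate against the conditional law, and verify that the tower-property peeling is legitimate with the filtration generated by $X_0,\dots,X_{n-1}$ as used in the paper's definition of a martingale. Everything else is routine; in particular the constants $c_n$ here will later be taken to be the a.s.\ bounds on the increments of the error process, so no further probabilistic structure is needed.
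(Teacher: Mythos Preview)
Your argument is the standard Chernoff--Hoeffding proof and is correct as written: the conditional Hoeffding lemma with constant $e^{s^2c^2/2}$, the tower-property peeling, the optimization $s=t/\sum_n c_n^2$, and the union bound all check out. Note, however, that the paper does not actually prove this theorem; it is quoted as a classical result with citations to \cite{Hoeffding1963,Azuma1967}, so there is no ``paper's own proof'' to compare against --- your write-up simply supplies the textbook argument behind the cited statement.
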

A sub-Gaussian random variable is a random variable for which the tails of its distribution decay as fast as a Gaussian distribution. More formally,
\begin{definition}[Sub-Gaussian random variable]
	A zero-mean random variable $X(\omega)$ is sub-Gaussian with proxy variance $\sigma^2>0$ if
	\begin{align}
	\P(|X|>t) \leq 2\exp\left(-\frac{t^2}{2\sigma^2}\right).
	\end{align}
\end{definition}
Clearly, the quantity $X_N-X_0$ in Azuma-Hoeffding inequality is sub-Gaussian. Here we will be interested in controlling the maximum of many sub-Gaussian random variables. The following maximal inequality will be useful:
\begin{theorem}
	\label{th:maximal}
	Let $\{X_i\}_{i=1}^N$ be a collection of sub-Gaussian random variables with variance proxy $\sigma^2$. Then
	\begin{align}
	\E[\max_i|X_i|^2]\leq 2\sigma^2\log(2eN).
	\end{align}
	The random variables in the collection need not be independent.
\end{theorem}

\begin{proof}
	Set $Z=\max_i|X_i|^2$ and let $P(z)$ and $P^{-1}(z)$ be the cumulative distribution function and the quantile function of $Z$ respectively. Then,
	\begin{align}
	\label{eq:_proof_maximal_ineq1}
	\E[Z]=\int_0^1P^{-1}(s)ds=\int_0^1P^{-1}(1-s)ds.
	\end{align}
	Furthermore, since $\{X_i\}_{i=1}^N$ are sub-Gaussian, we have
	\begin{align}
	1-P(t)=\P(Z>t)\leq \sum_{i=1}^N\P(|X_i|>t) \leq 2N\exp\left(-\frac{t}{2\sigma^2}\right).
	\end{align}
	Setting $1-P(t)=s$ so that $t = P^{-1}(1-s)$, we obtain $\exp\left(\frac{t}{2\sigma^2}\right)\leq \frac{2N}{s}$, which implies that $t \leq 2\sigma^2\log\left(\frac{2N}{s}\right)$, and hence from \eqref{eq:_proof_maximal_ineq1} we get
	\begin{align}
	\E[Z]\leq 2\sigma^2\int_0^1\log\left(\frac{2N}{s}\right)ds = 2\sigma^2\log(2eN),
	\end{align}
	which is the thesis.
\end{proof}

To obtain a bound for $\E[\Vert\bm{E}^n\Vert^2_\infty]^{1/2}$ we write each $\bm{E}^n_{\bm{i}}$ as
\begin{align}
\bm{E}^n_{\bm{i}} = \sum\limits_{m=0}^{n-1}\sum\limits_{{\bm{j}}} (S(-\Delta t A)^{n-1-m})_{{\bm{i}}{\bm{j}}}\eps^m_{\bm{j}} = \sum\limits_{m=0}^{n-1}\sum\limits_{{\bm{j}}}Z_{m,{\bm{j}}} = \sum\limits_{k=1}^{(n-1)(K-1)^d} Z_k,
\end{align}
where we just re-indexed the last sum with $k=\sum_{i=1}^d\bm{j}_i(K-1)^{i-1} + m(K-1)^d$. Now, if we set $P=(n-1)(K-1)^d$, $X_p = \sum_{k=1}^pZ_k$, with $X_0=0$, and $X_P=\bm{E}^n_{\bm{i}}$, we have that $\{X_p\}_{p=0}^P$ is a martingale: clearly $\E[|X_p|]<\infty$ and
\begin{align}
\E[X_p|X_0,\dots,X_{p-1}]=\E[Z_p|X_0,\dots,X_{p-1}] + X_{p-1}=X_{p-1},
\end{align}
due to the properties of $\eps^n$, cf.~Lemma \ref{lemma:epsSR_properties}. We can therefore apply Azuma-Hoeffding inequality with
\begin{align}
|X_p-X_{p-1}|\leq \varepsilon|S(-\Delta t A)^{n-m-1}|_{{\bm{i}}{\bm{j}}}=c_p,
\end{align}
for $p=\sum_{i=1}^d\bm{j}_i(K-1)^{i-1}+m(K-1)^d$. $\bm{E}^n_{\bm{i}}$ is therefore sub-Gaussian with proxy variance
\begin{align}
\label{eq:sigma2_SR_1}
\sigma^2 = \sum_pc_p^2 = \varepsilon^2 \sum\limits_{m=0}^{n-1}\sum\limits_{{\bm{j}}}\left((S(-\Delta t A)^{n-m-1})_{\bm{i}\bm{j}}\right)^2. 
\end{align}

The next step is to derive a nicer expression for $\sigma^2$. We define the auxiliary vector $\bar{\bm{E}}^n$, which solves equation \eqref{eq:num_scheme_global_error} in which $\eps^n$ has been replaced by $\bar{\eps}^n(\omega)$ with $\bar{\eps}^n_i=\pm\varepsilon$ with equal probability and with $\bar{\eps}^n_{\bm{i}}$ and $\bar{\eps}^m_{\bm{j}}$ independent unless ${\bm{i}}={\bm{j}}$ and $n=m$. The key step now is to realize that by construction we have $\sigma^2=\E[(\bar{\bm{E}}^n_{\bm{i}})^2]$, where the expectation can be bounded as follows.
\begin{align*}
\E[(\bar{\bm{E}}^n_{\bm{i}})^2]&=\varepsilon^2\left(\frac{2}{K}\right)^{2d}\ \sum\limits_{m=0}^{n-1}\sum\limits_{{\bm{j}}}\left(\sum\limits_{\bm{k}}S(s_{\bm{k}})^{n-1-m}(\bm{v}_{\bm{k}}^h)_{{\bm{i}}}(\bm{v}^h_{\bm{k}})_{{\bm{j}}}\right)^2\\
&= \varepsilon^2\left(\frac{2}{K}\right)^{2d}\ \sum\limits_{m=0}^{n-1}\sum\limits_{\bm{k},\tilde{\bm{k}}}S(s_{\bm{k}})^{n-1-m}S(s_{\tilde{\bm{k}}})^{n-1-m}(\bm{v}_{\bm{k}}^h)_{{\bm{i}}}(\bm{v}_{\tilde{\bm{k}}}^h)_{{\bm{i}}}\sum\limits_{\bm{j}}(\bm{v}^h_{\bm{k}})_{{\bm{j}}}(\bm{v}^h_{\tilde{\bm{k}}})_{{\bm{j}}}\\
&=\varepsilon^2\left(\frac{2}{K}\right)^{d}\ \sum\limits_{\bm{k}}\sum\limits_{m=0}^{n-1}S(s_{\bm{k}})^{2(n-1-m)}(\bm{v}_{\bm{k}}^h)_{{\bm{i}}}^2\leq \varepsilon^2\left(\frac{2}{K}\right)^{d}\ \sum\limits_{\bm{k}}\frac{1}{1-S(s_{\bm{k}})^2}.
\end{align*}
Here we used the fact that $\sum_{\bm{j}}(\bm{v}^h_{\bm{k}})_{{\bm{j}}}(\bm{v}^h_{\tilde{\bm{k}}})_{{\bm{j}}}=0$ if $\bm{k}\neq\tilde{\bm{k}}$ due to the orthogonality of the $\bm{v}_{\bm{k}}^h$ and the relations in \eqref{eq:eigenpairsA_bounds}. We can now apply Theorem \ref{thm:Sseriesbounds} to obtain
\begin{align}
\sigma = \E[(\bar{\bm{E}}^n_{\bm{i}})^2]^{1/2}\leq 2^{d/2}\varepsilon K^{-d/2}\sqrt{\phi_d(\Delta t)}=2^{d/2}\lambda^{-d/4}\varepsilon \Delta t^{d/4}\sqrt{\phi_d(\Delta t)}.
\end{align}
We conclude by invoking Theorem \ref{th:maximal} and the fact that $\bm{E}^n_i$ is sub-Gaussian with proxy variance $\sigma^2$, giving:
\begin{align}
\E[\Vert\bm{E}^n\Vert^2_\infty]^{1/2}\leq 2^{(d+1)/2}\lambda^{-d/4}\varepsilon \Delta t^{d/4}\sqrt{\phi_d(\Delta t)}\sqrt{d/2\log(\lambda^{-1}\Delta t) + \log(2e)}.
\end{align}
For fixed $\lambda$, the bound is $O(\Delta t^{-1/4}|\log(\lambda^{-1}\Delta t)|^{1/2})$ in 1D, $O(|\log(\lambda^{-1}\Delta t)|)$ in 2D and $O(|\log(\lambda^{-1}\Delta t)|^{1/2})$ in 3D.

\begin{remark}
	In this section we have shown that the use of SR leads to smaller error estimates. While these upper bounds\footnote{We remark that when working with rounding errors the only lower bound achievable is zero since floating-point computations can be exact in some specific scenarios.} do not necessarily establish that SR will be more accurate than RtN, the fact that deterministic rounding modes lead to a linear growth of the error is well known \cite[]{Henrici1962,Henrici1963,Jezequel1995,ConnollyHighamMary2020}. Indeed, we show in the next section that all our estimates are sharp, and that global errors behave as predicted by our bounds.
\end{remark}

\begin{remark}
	In this paper we did not discuss adaptive timestepping. We remark that this is not just for the sake of simplicity. Adaptivity in RK schemes typically relies on error estimates based on the comparison of two schemes that converge with different orders of accuracy. The problem when working in reduced precision is that these two schemes would not actually converge, but instead produce errors that, as we have shown in this section, stagnate or blow up as $\Delta t\rightarrow 0$. This behavior is likely to disrupt most adaptive schemes. The design of an adaptive strategy that is robust to rounding errors is currently an open question that we leave for future work.
\end{remark}

\section{Numerical results}
\label{sec:num_res}


In this section we present some numerical experiments that support and complete the theory presented in the paper. The code and data used to generate the figures and results of this section are publicly available on Zenodo\footnote{See \url{https://doi.org/10.5281/zenodo.6146386}.}. All low-precision computations are emulated in software and performed using bfloat16. The fp16 format yields qualitatively similar results which we do not show here. Emulating low-precision operations is computationally expensive, especially for 2D and 3D problems. To make the computations faster, we employ a custom-built C++ emulator, Libchopping\footnote{Available at \url{https://bitbucket.org/croci/libchopping/}.}, interfaced to Python, that combines vectorization, OpenMP and MPI for parallelization and uses the Intel Math Kernel Library for random number generation. The basic emulation algorithm was inspired by the low-precision emulator of \cite{HighamPranesh2019}. Other low-precision emulators supporting SR are also available (cf.~\cite{fousse2007mpfr,zhang2019qpytorch,fasi2021algorithms}), see Section 7(d) in \cite{croci2022stochastic} for an overview.


\paragraph{Test problems} Throughout the rest of the section, unless otherwise indicated, we fix $h$ to be a power of two, $\lambda=(1/2 - 2^{-4})/d$, $T=\lceil 1/\Delta t \rceil \Delta t$ and $g(\bm{x})= G\in\R$ for all $\bm{x}$. The choice of $\lambda$ was made so as to satisfy the stability requirement for forward Euler (FE). Furthermore, we choose time-independent forcing terms so that the exact steady-state solutions are:
\begin{gather*}
\uu_{1D}(\infty,x)      = (4\,x(1-x))^2 + G,\quad
\uu_{2D}(\infty,\bm{x}) = (16\,xy(1-x)(1-y))^2 + G,\\
\uu_{3D}(\infty,\bm{x}) = (64\,xyz(1-x)(1-y)(1-z))^2 + G.\notag
\end{gather*}
Note that all of these solutions are symmetric with respect to $x{=}\frac{1}{2},y{=}\frac{1}{2},z{=}\frac{1}{2}$, satisfy the boundary conditions, and attain their maximum value of $1{+}G$ at the center of the domain. To obtain the corresponding forcing terms it is sufficient to take the negative Laplacian of the exact solutions.

\begin{figure}[h!]
	\centering
	\begin{subfigure}[]{0.45\textwidth}
		\centering
		\includegraphics[width=\textwidth]{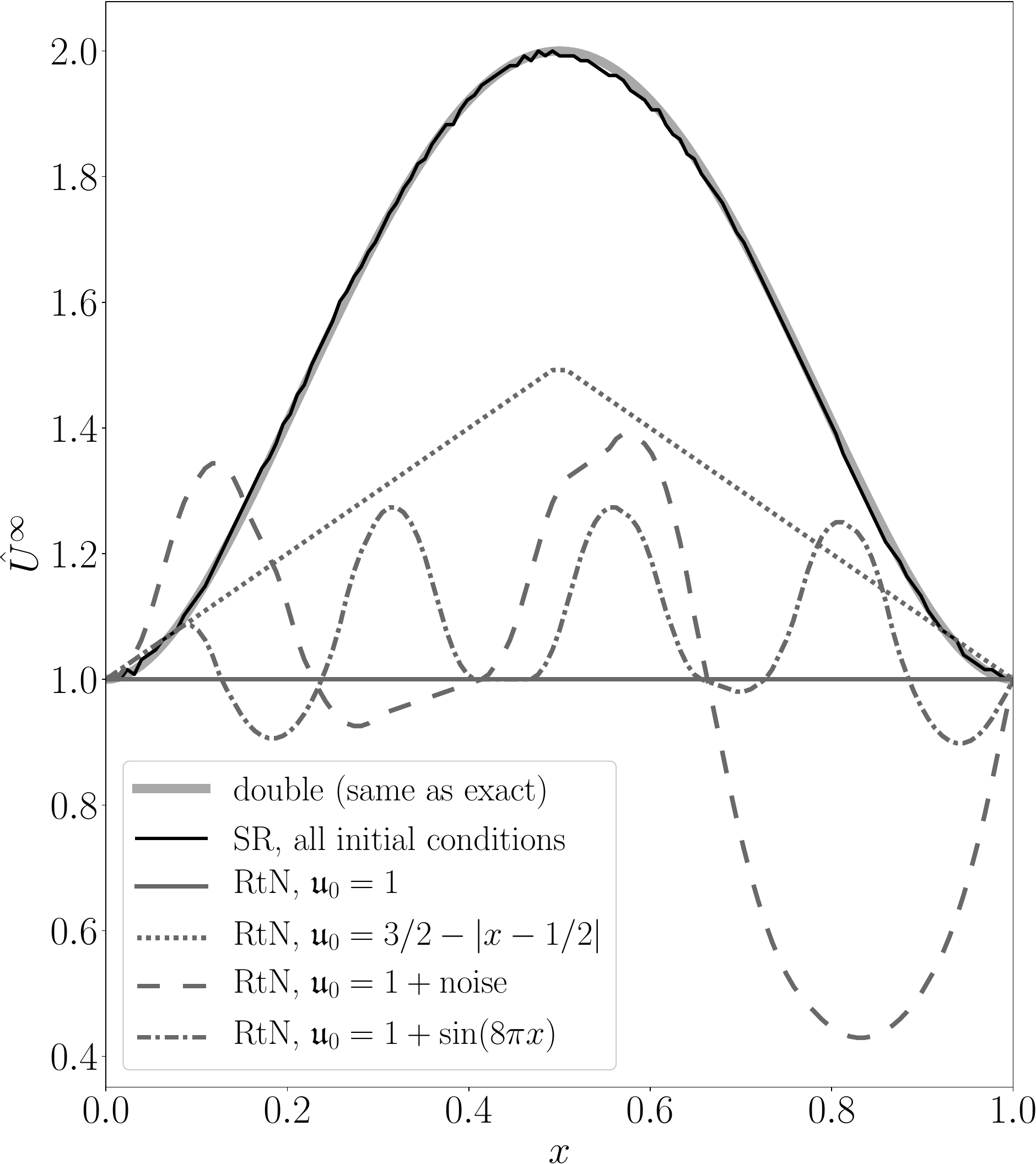}
	\end{subfigure}%
	~
	\begin{subfigure}[]{0.45\textwidth}
		\centering
		\includegraphics[width=\textwidth]{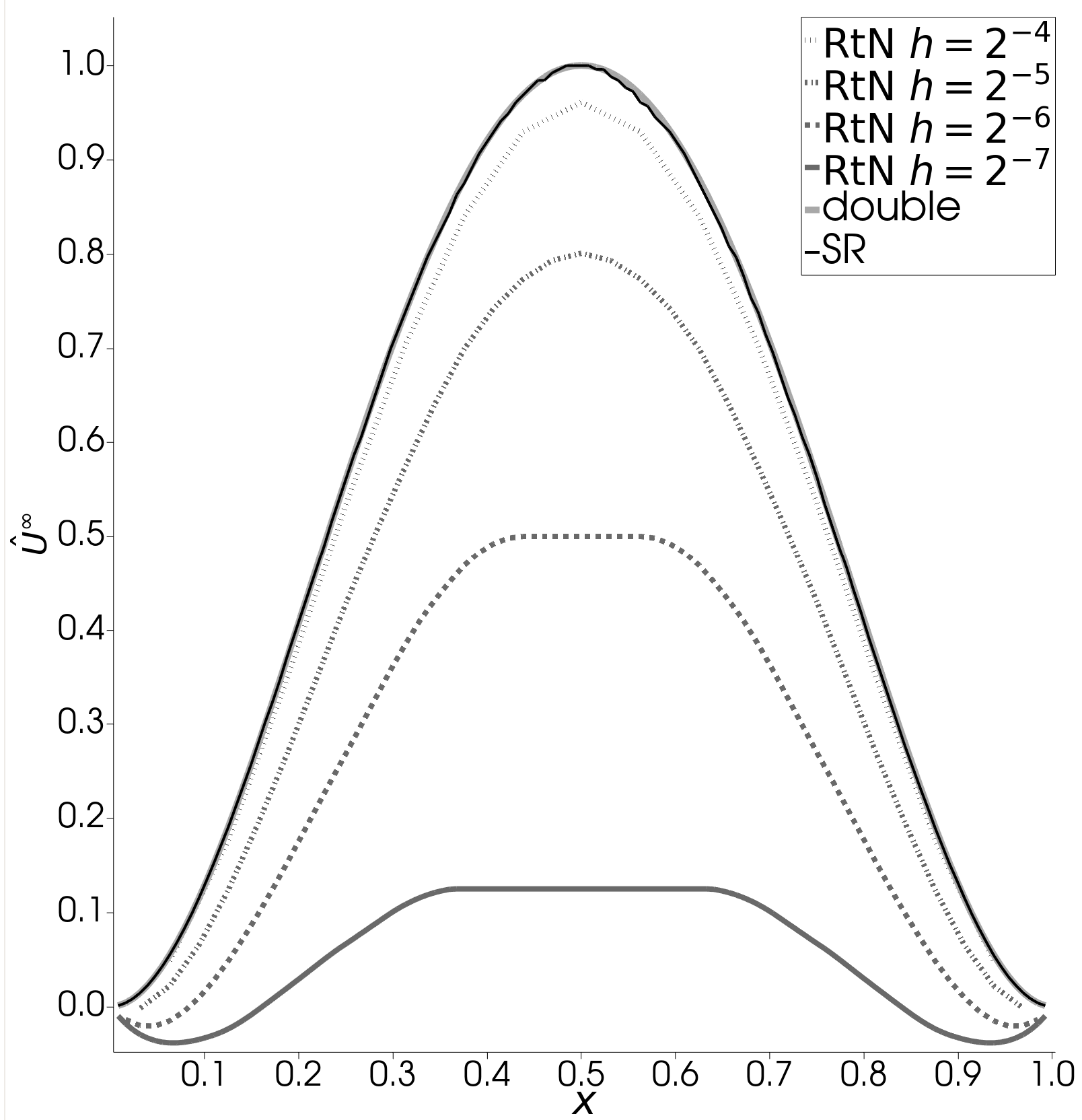}
	\end{subfigure}
	\caption{\textit{Comparison between FE and exact steady-state solutions in 1D (left) and in 2D (right, here the plot is over the $y=1/2$ line in the $xy$ plane) at $t=T=1$ for different initial conditions. In both plots we compare the exact steady-state solution (visibly the same as for double precision) with those obtained with bfloat16+RtN and bfloat16+SR. The SR solutions are always obtained with $h=2^{-7}$ and are as accurate as the precision allows.
			In the left figure, we fix $G=1$, $h=2^{-7}$ and we vary the initial condition leading to different steady-state RtN solutions. The $\text{noise}$ term in the initial condition has been obtained by sampling independent standard Gaussian random variables at each mesh node. We note that all SR solutions converge to the same steady-state modulo some barely visible oscillations. In the right figure, we fix $G=0$, $\uu_0=0$, and, for RtN only, we vary the mesh size. The RtN solutions converge to smaller steady-state solutions as $h$ is refined, significantly underestimating the true solution. On the finest grid the RtN graph even attains negative values.}}
	\label{fig:solplot}
	\vspace{0pt}
\end{figure}

In this section we mainly experiment with first order methods, namely FE in 1D, 2D and 3D and backward Euler (BE) in 1D and 2D. Experimentation with other popular Runge-Kutta methods, RK4, Crank-Nicolson, and first- and second-order RK-Chebyshev methods (RKC1 and RKC2; see \cite{VerwerHundsdorfer1990RKC}) yielded qualitatively similar results, which we mainly do not show here for the sake of brevity. The linear system solver used for BE was the Thomas algorithm in 1D and geometric multigrid (V-cycles) with Jacobi relaxation in 2D. Both solvers were implemented in (emulated) low precision. For the geometric multigrid solver, we observe monotone convergence of the residual and we declare convergence when there is no longer any significant reduction in the residual. Experimentally, this only happens when the relative residual is close to machine precision. 


Before proceeding, we show in Figure \ref{fig:solplot} what happens in practice when the heat equation is solved using double precision or bfloat16 with RtN and SR. The results in the left figure have been obtained with FE in 1D with $h=2^{-7}$ by setting $g(x)=G=1$ and by varying the initial condition. The results in the right figure have been obtained with FE in 2D by setting $g(x)=G=0$ and varying the mesh size: $h\in\{2^{-l}\}_{l=4}^{l=7}$. Both plots show the numerical and exact solutions at $t=T=1$.

The results are in agreement with the considerations of Section \ref{subsec:RtNstagnation}. Due to stagnation, the low-precision RtN solution stops being updated and the values obtained significantly underestimate the true solution. In Figure \ref{fig:solplot} (left), we note how different initial conditions lead to different steady-state solutions for RtN. Conversely, the SR computations always converge to the same steady state (whenever this is representable) or mildly oscillate around it at random (when it is not representable). Remarkably, RtN solutions stagnate very close to low-frequency initial conditions ($\uu_0=1$, $\uu_0=3/2-|x-1/2|$) and later for high-frequency ones ($\uu_0=1+\sin(8\pi x)$ and Gaussian noise). In fact, the latter led to oscillatory solutions for which the discrete Laplacian, and consequently the $\Delta \widehat{\bm{U}}^n$ terms, are larger in magnitude and counterbalance the $\Delta t$ in the stagnation condition \eqref{eq:stagn_cond}.

In Figure \ref{fig:solplot} (right) we observe that stagnation occurs even for zero initial conditions. In this case, however, stagnation occurs gradually as the mesh size (hence $\Delta t$) decreases, each time leading to different steady-state solutions. Furthermore, the steady-state solution seems to approach the zero initial condition as $\Delta t\rightarrow 0$. SR on the other hand is again impervious to stagnation and the bfloat16 results are as accurate as the working precision allows. In both plots we also observe that while RtN preserves symmetry due to its deterministic nature, SR does not. We note, however, that the asymmetry in the SR results is barely visible in the figure.

\noindent\textbf{Note:} From now on we set $\uu_0=G=1$ in all the numerical experiments.

\begin{remark}
	In our numerical experiments, the computation of the matrix-vector product $-A\widehat{\bm{U}}^n$ via two first-order differences (as described in Section \ref{subsec:matvecs}) always resulted to be exact in all dimensions. This behavior is likely a consequence of $h$ being a power of $2$. Conversely, using a direct second-order difference introduced rounding errors growing like $(h^{-2})$, as predicted by Statement 1 of Theorem \ref{th:2firstorderdiff2}. We do not show the latter results here for the sake of brevity.
\end{remark}

\paragraph{Local rounding errors}
In Figure \ref{fig:local2D_RtN_SR} we plot the worst-observed local rounding errors arising from the solution of the 2D test problem as a function of the timestep $\Delta t$. The 1D and 3D results, and the behavior of higher-order methods such as RK4 and RKC2 are qualitatively similar and we do not show them here. Recall that here $\lambda$ is fixed and is the same for both FE and BE (we investigate how $\lambda$ affects the BE results later). We consider three cases: 1) We use the delta form of the timestepping scheme and we use the more refined second-order differencing strategy (``delta form'' in the figure). 2) We use the delta form, but we directly use a second order difference (``naive matvec'' in the figure). 3) We do not use the delta form. We adopt the following error measures for the local rounding errors:
\begin{align}
\label{eq:local_rounding_error_measures}
\frac{1}{u}\max_n\frac{\Vert\text{err}^n\Vert_\infty}{\Vert\widehat{\bm{U}}^N\Vert_\infty},\quad\text{for RtN};\quad \frac{1}{u}\max_{n,\omega}\frac{\Vert\text{err}^n(\omega)\Vert_\infty}{\Vert\widehat{\bm{U}}^N(\omega)\Vert_\infty},\quad\text{for SR}.
\end{align}
Where $\text{err}^n$ is either $\Theta^n$ or $\EPS^n$ depending on whether the delta form is used or not. For SR the $\max$ is taken over all observed realizations across the number of Monte Carlo samples used to estimate the global errors (see later). We divide the errors by $\Vert\widehat{\bm{U}}^N\Vert_\infty$ to normalize the results: for this test problem the RtN solution has smaller magnitude due to stagnation, cf.~Figure \ref{fig:solplot}. We do not track the $\eps^n$ term here since we know from Theorem \ref{th:eps_expl_impl} that it is bounded above by $M\ups$ where $M=\max_n\Vert\widehat{\bm{U}}^n\Vert_\infty$ giving a maximum value of $1$ for RtN and of roughly $2$ for SR for the error measures in \eqref{eq:local_rounding_error_measures}. In fact, for this problem we observe that $\Vert\widehat{\bm{U}}^n\Vert_\infty\leq\Vert\widehat{\bm{U}}^N\Vert_\infty$ for all $n$ for RtN and that the same relation still approximately holds for SR.

\begin{figure}[h!]
	\centering
	\includegraphics[width=\linewidth]{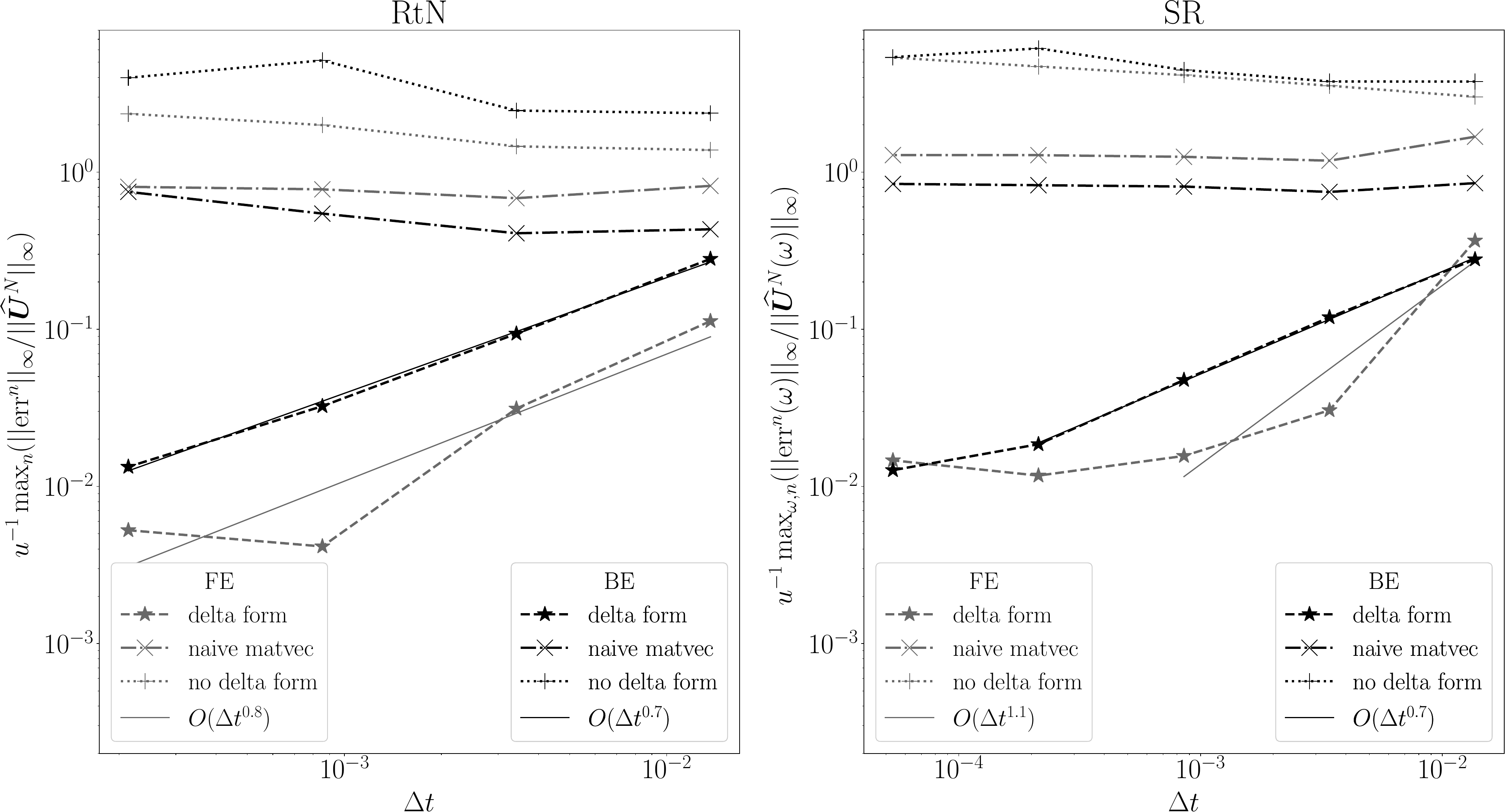}
	\caption{\textit{Worst-observed local errors as a function of $\Delta t$ for RtN (left) and SR (right) and for FE (black) and BE (grey) in 2D. The slopes of the solid lines have been estimated from the observed values via linear regression. The results match with the theory showing that the delta form and the refined differencing strategy are significantly more accurate. Refining $\Delta t$ further than the left figure values causes RtN to stagnate at $\bm{U}^0=\bm{1}$.}}
	\label{fig:local2D_RtN_SR}
	\vspace{-12pt}
\end{figure}

From Figure \ref{fig:local2D_RtN_SR} it is clear that using the delta form with the refined differencing strategy yields the minimum error. In this case Assumption (A3) appears to be satisfied since the error decreases with $\Delta t$ at a rate $p\in[1/2,1]$. For SR this rate eventually plateaus to $p=0$. To understand better this behavior, we look at FE for simplicity and note that since the matrix-vector products are here computed exactly, we can write the (rounded) delta form update as
\begin{align}
\Delta \widehat{\bm{U}}^n + \tilde{\Theta}^n = (\Delta t A\widehat{\bm{U}}^n + \Delta t (\bm{f}^n + \Delta \bm{f}^n))(1+\bm{\delta}).
\end{align}
Here $\Delta \bm{f}^n$ satisfies $\Vert\Delta \bm{f}^n\Vert_{\infty}\leq C_fM_f\ups$ (Assumption (A1) holds), $\Vert\bm{\delta}\Vert_\infty < \ups$ and we have ignored the error due to multiplication by $\Delta t$ for simplicity. For small $\Delta t$, the $\Delta t \Delta \bm{f}^n$ error term is negligible and the worst-case error in the update $\Theta^n$ (cf.~equation \eqref{eq:Theta}) is, to leading order,
\begin{align}
\Theta^n = O(\ups\Delta t | A\widehat{\bm{U}}^n|).
\end{align} 
Let $m\in\{0,1,2\}$. If $\widehat{\bm{U}}^n$ were the discrete version of an $m$-times differentiable function of space, we would expect $\Delta t|A\widehat{\bm{U}}^n|=O(\Delta t h^{m-2})=O(\Delta t^{m/2})$. Comparing this rate with Figure \ref{fig:local2D_RtN_SR}, it then appears that both RtN and SR solution are less smooth than the exact solution $\uu$. Furthermore, the SR solution smoothness deteriorates as the timestep is reduced.\footnote{For a visible example of what happens in practice, observe the oscillations in the SR solutions shown in Figure \ref{fig:solplot}} Note that the RtN rate here does not deteriorate since for small $\Delta t$ stagnation occurs at the initial condition (cf.~Section \ref{subsec:RtNstagnation}), which is smooth and exactly representable.

Comparing Figure \ref{fig:local2D_RtN_SR} with what is stated in Theorem \ref{th:eps_expl_impl} we note that these experiments mostly match the theoretical predictions. We observe a $\theta$ of order $O(\Delta t^p)$ with $p>1/2$ (at least initially) when the refined differencing strategy is used and $p=0$ when it is not. Furthermore, all local errors are essentially bounded as $\Delta t$ is reduced. Finally, these results also show that when the refined differencing strategy is used $\Theta^n$ is negligible with respect to $\eps^n$, which is $O(1)$ in the worst-case. We therefore expect the $\eps^n$ terms to dominate the global error.

\paragraph{Global rounding errors} We conclude the section by analysing the global rounding errors. From now on we only work with the delta form and the refined differencing strategy since these clearly give the most accurate implementation. We just saw how in this case the $\eps^n$ term dominates over $\Theta^n$ so we expect the global errors to behave according to Scenario 1) (cf.~Section \ref{sec:global}) for RtN and to Scenario 2) for SR. We monitor the following quantities:
\begin{align}
\label{eq:global_error_measures}
u^{-1}\frac{\Vert\bm{E}^N\Vert}{\Vert\widehat{\bm{U}}^N\Vert},\ \text{for RtN};\quad\quad u^{-1}\frac{\E[\Vert\bm{E}^N(\omega)\Vert^2]^{1/2}}{\Vert\bm{U}^N\Vert},\ \text{for SR},
\end{align}
where the norms used are either the infinity or the discrete $L^2$ norm. We use standard Monte Carlo sampling to estimate the expected errors in the SR case. Rather than using a fixed number of samples, we use a number which ensures that the true expected error is within $5\%$ of the estimated value with at least $95\%$ confidence. We work with both FE and BE, as well as the RK4 and RKC2 ($16$ stages) methods, which we all compare by using the same $\Delta t$ values for $\lambda$ fixed. We remark that this is not the standard way of employing RKC methods\footnote{The stability region of RKC methods grows quadratically with the number of stages, which are increased as needed when the grid size is refined \cite[]{VerwerHundsdorfer1990RKC}. Therefore, the common usage of RKC2 would be to increase the number of stages and $\lambda$ proportionally to $1/h$ for $\Delta t$ fixed.}. Nevertheless, we believe this is a good way of comparing RKC2 with the other methods in the asymptotic regime $\Delta t \rightarrow 0$.

\begin{figure}[H]
	\vspace{-0pt}
	\centering
	\begin{subfigure}{0.83\linewidth}
		\includegraphics[width=\linewidth,trim={0 0 0 2.5cm},clip]{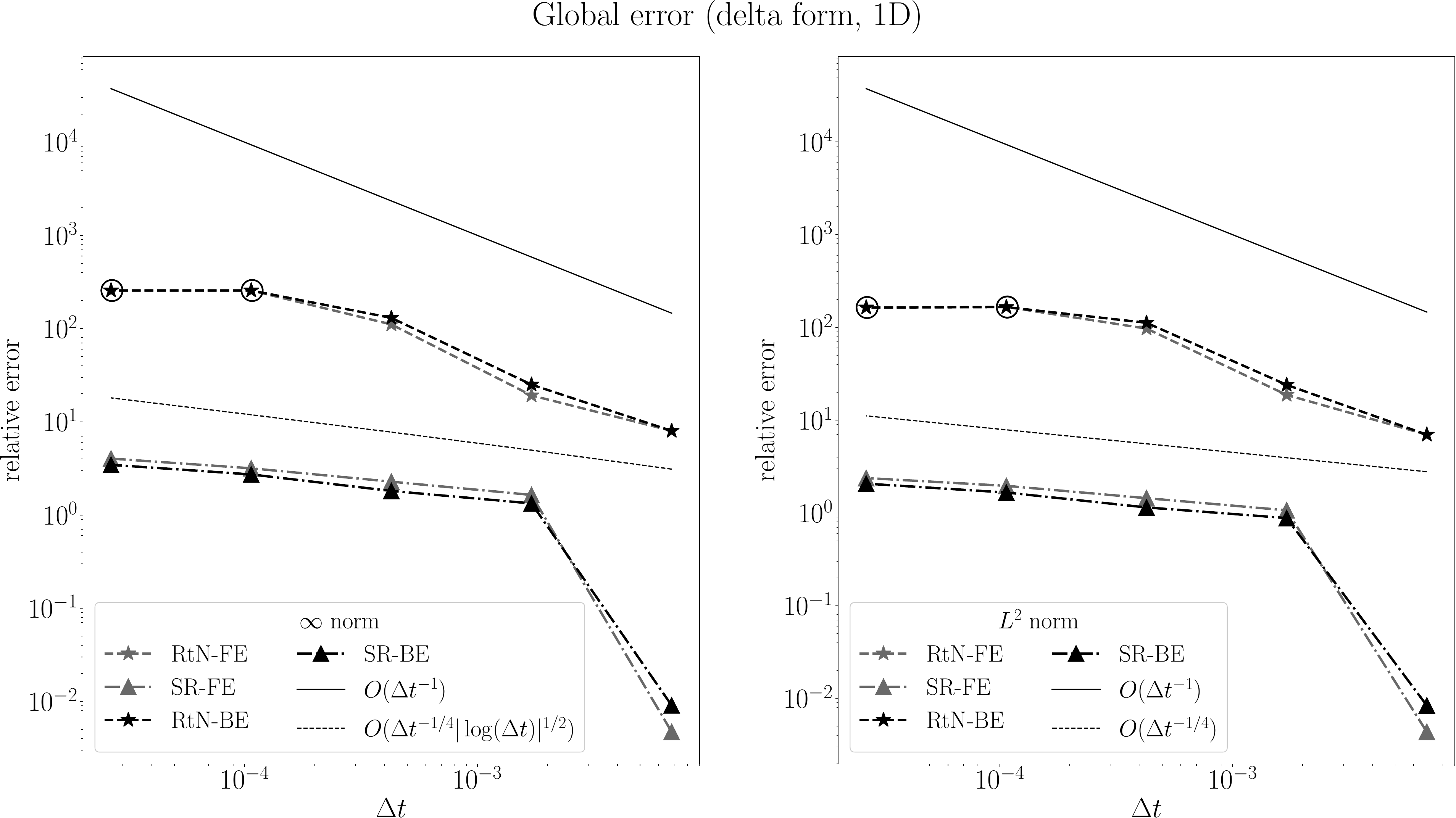}
	\end{subfigure}
	\begin{subfigure}{0.83\linewidth}
		\includegraphics[width=\linewidth,trim={0 0 0 2.5cm},clip]{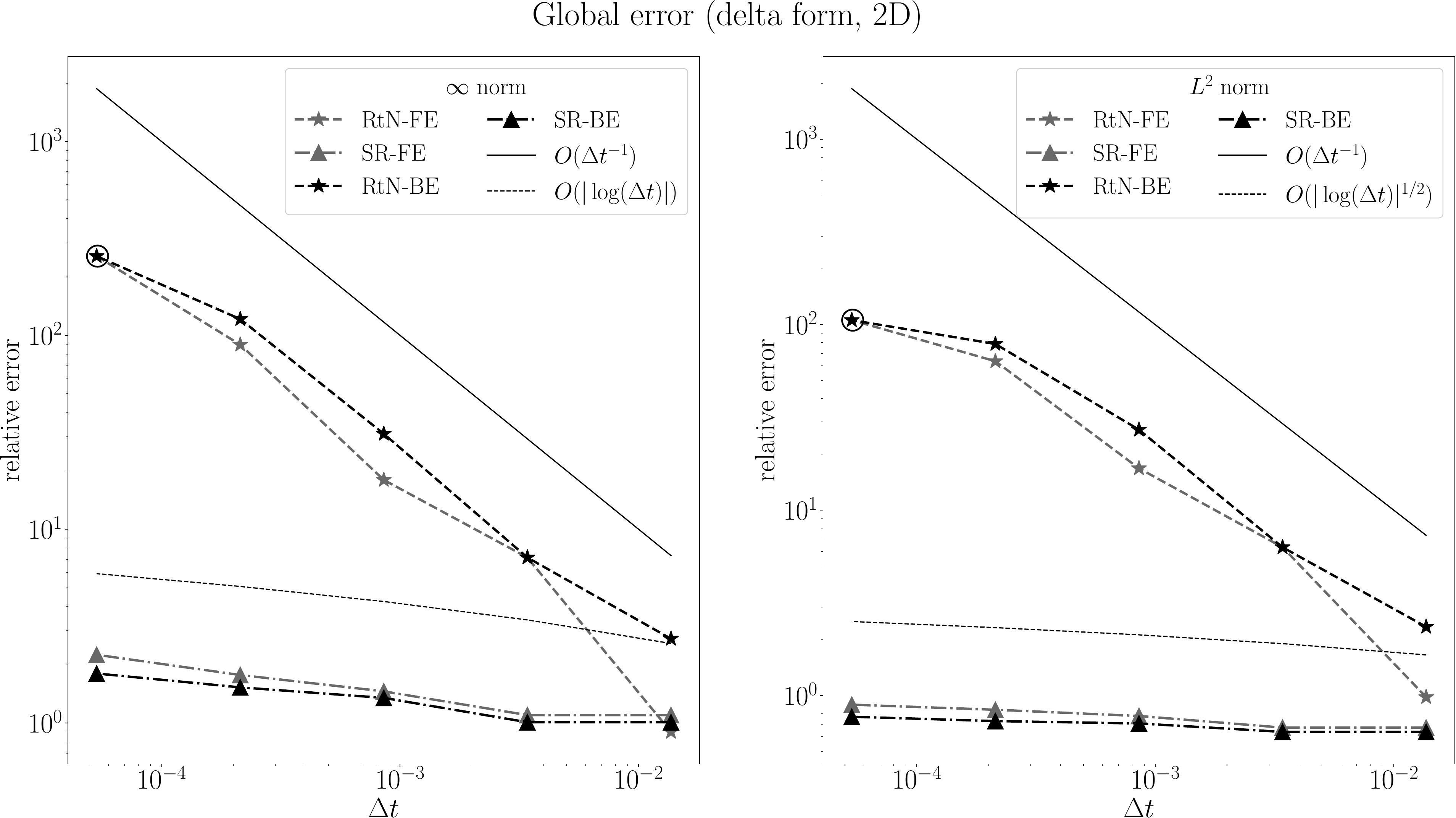}
	\end{subfigure}
	\begin{subfigure}{0.83\linewidth}
		\includegraphics[width=\linewidth,trim={0 0 0 2.5cm},clip]{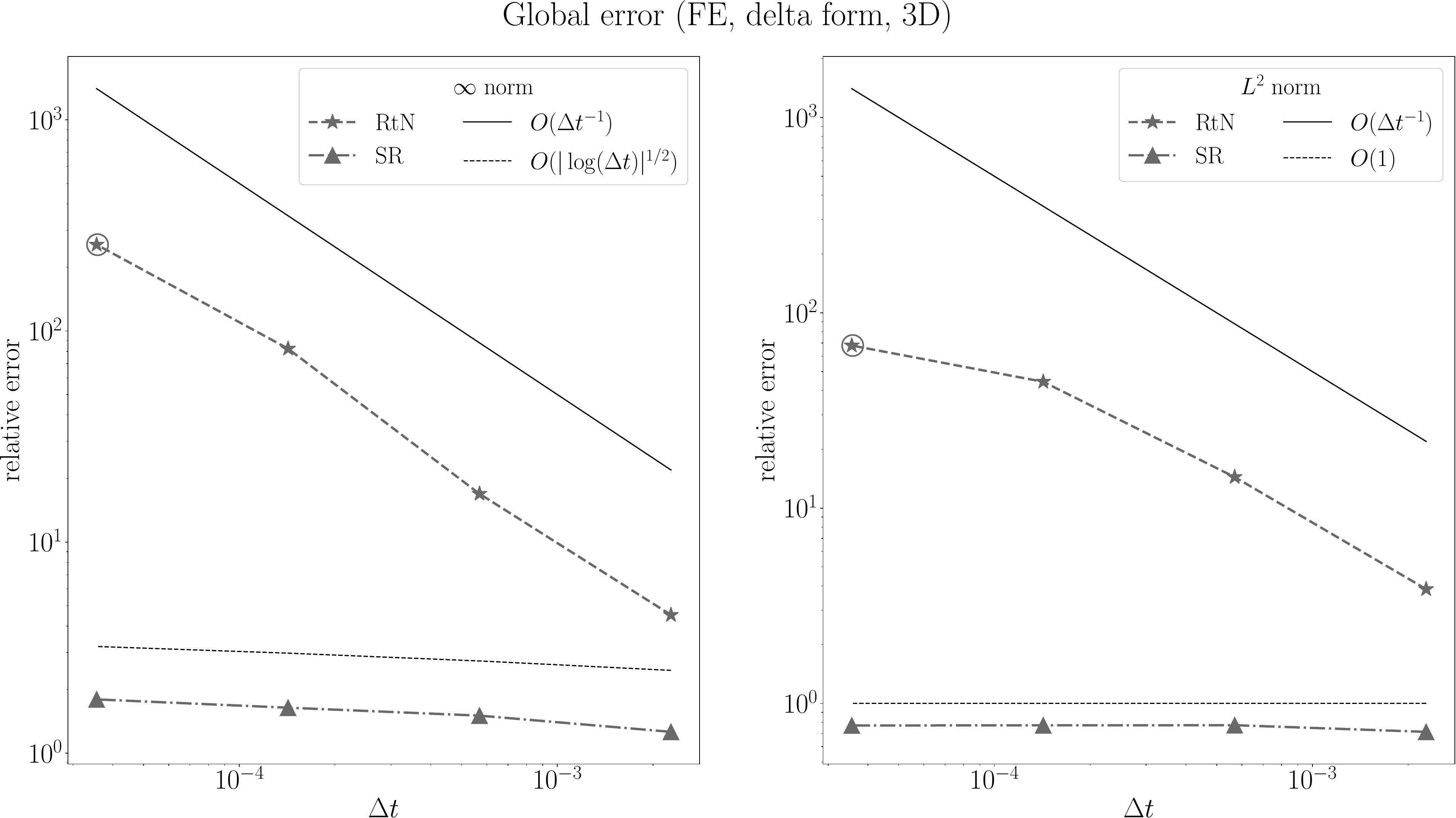}
	\end{subfigure}
	\vspace{-0pt}
	\centering
	\caption{\small\textit{Plots of the relative global rounding error in the $\infty$ (left) and discrete $L^2$ norm (right) in 1D (top), 2D (middle), and 3D (bottom). We circled the RtN data points for which the solution stagnates at the initial condition. The error behavior matches the theoretical predictions of Section \ref{sec:global}. On the coarsest mesh the 1D steady-state solution and forcing term is (almost) exactly representable, hence the low error SR data points in the 1D plots.}}
	\label{fig:global}
	\vspace{-16pt}
\end{figure}

\begin{remark}
	\label{rem:RtN_lower_upper_global_error}
	In all practical simulations with RtN, the error measures \eqref{eq:global_error_measures} always satisfied
	\begin{align}
	\label{eq:remRtNlowerupperbounds}
	u^{-1}\frac{\Vert\fl(\bm{U}^\infty) - \bm{U}^\infty\Vert}{\Vert\bm{U}^\infty\Vert} \leq \text{error measure} \leq u^{-1}\frac{\Vert\widehat{\bm{U}}^0 - \bm{U}^\infty\Vert}{\Vert\bm{U}^\infty\Vert},
	\end{align}
	where $\widehat{\bm{U}}^0=\bm{U}^0_{\bm{i}}=\uu(0,\bm{x}_{\bm{i}})$. The lower bound is actually a theoretical result that also applies to SR (after replacing $\fl(\cdot)$ with $\sr(\cdot)$ and taking the minimum over $\omega$). In fact, it indicates that the best we can hope for is to compute the discrete steady-state solution exactly and then rounding it. On the other hand, the upper bound confirms (cf.~Section \ref{subsec:RtNstagnation}) that in the worst case RtN stagnation begins from the very first timestep so that the rounded solution is stuck at the initial condition. Note that in \eqref{eq:global_error_measures} we divide the RtN error by $\Vert\widehat{\bm{U}}^N\Vert$ rather than $\Vert\bm{U}^N\Vert$, to make the comparison with SR fairer when stagnation occurs.
\end{remark}

Results are shown in Figures \ref{fig:global} and \ref{fig:global_high_order}. The SR results essentially follow the asymptotic rates derived in Section \ref{sec:global} and described in Table \ref{tab:rates}, with possibly slightly lower rates in 1D (cf.~Figure \ref{fig:global} top). These results show that SR behaves as predicted by our theory and that the $\eps^n$ term is indeed dominating the local error, thus determining the global error behavior. Comparing the SR results with the lower bound in \eqref{eq:remRtNlowerupperbounds}, we note that the $O(1)$ errors observed with SR are extremely close to the best we can theoretically hope to achieve. On the other hand, when RtN is used the relative global errors are always growing like $O(\Delta t^{-1})$ until severe stagnation occurs, matching the theoretical bounds derived in Section \ref{sec:global}. This confirms that in the RtN case rounding errors cannot be modelled as spatially or temporally uncorrelated and/or independent: if this were the case a smaller growth rate would be observed due to cancellation of cross-correlation terms in the global error analysis of Section \ref{sec:global}.

We observe that when severe stagnation occurs our RtN global error analysis becomes too pessimistic. In fact, the local errors stop accumulating in the worst possible way as the RtN solution stagnates at the initial condition (cf.~Remark \ref{rem:RtN_lower_upper_global_error}). Nevertheless, our bounds are sharp when they are needed, as the severe stagnation regime should just be avoided. We remark that for neither rounding modes we observe any additional growth in $\Delta t$ due to implicit timestepping, indicating that in this case the constants appearing in Theorem \ref{th:eps_expl_impl} might be $K$-independent. In fact, the FE and BE global rounding errors are comparable in size.

\begin{figure}[h!]
	\vspace{-0pt}
	\includegraphics[width=0.91\linewidth,trim={0 0 0 0cm},clip]{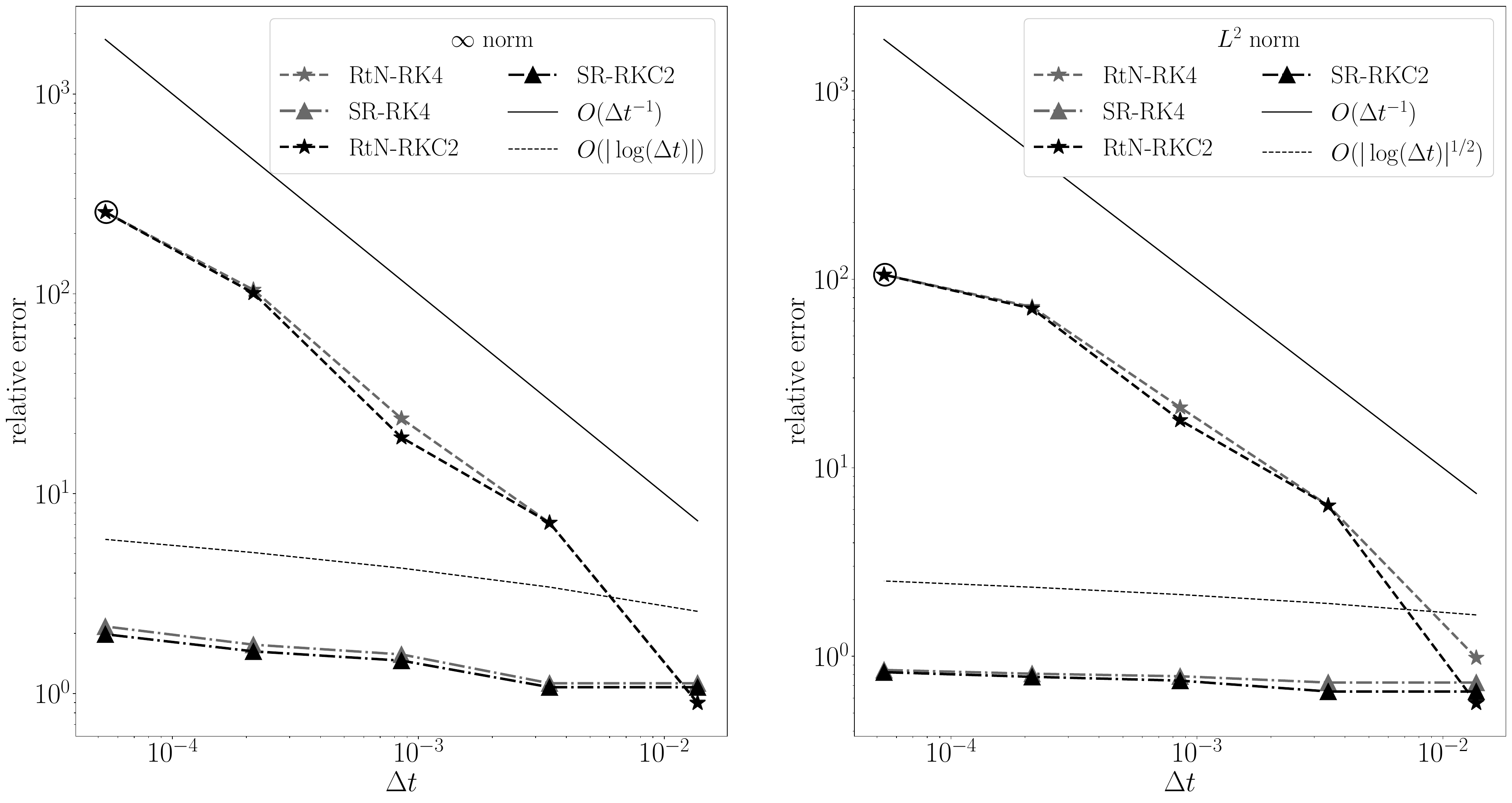}
	\vspace{-0pt}
	\centering
	\caption{\small\textit{Plots of the relative global rounding error in the $\infty$ (left) and discrete $L^2$ norm (right) in 2D for the RK4 and second-order RK-Chebyshev ($16$ stages) methods. We circled the RtN data points for which the solution stagnates at the initial condition. The global error behavior matches the theoretical predictions of Section \ref{sec:global}. Comparing these results with Figure \ref{fig:global} (middle), these higher-order methods do not seem to improve over first-order methods.}}
	\label{fig:global_high_order}
	\vspace{-6pt}
\end{figure}

In Figure \ref{fig:global_high_order} we specifically look at the global error behavior of the higher-order methods RK4 and RKC2 in 2D. Interestingly, the error behavior of high-order methods is qualitatively the same as for the first-order Euler methods. In fact, our theory holds for all RK methods, and states that no matter the order, RK methods will stop converging in reduced-precision with the error either stagnating to an $O(u)$ value (for SR in 3D) or blow-up as $\Delta t \rightarrow 0$ (with a much faster error growth rate for RtN). With these results in mind, we suspect that there might not actually be any accuracy and performance benefit of using higher-order methods in reduced precision, although they might still be required for nonlinear stability purposes.


\begin{remark}
	The SR error lines in Figure \ref{fig:global} all behave according to Scenario 2) (cf.~beginning of Section \ref{sec:global}). This suggests that the $\theta$ term behaves better than the worst case scenario (cf.~Remark \ref{rem:full_error}) and that the $\varepsilon$ term dominates. We suspect that the $\theta$ term behaves somewhere in between the two scenarios, with local rounding errors that are approximately uncorrelated (or mean-independent) across timesteps. We leave this investigation to future research.
\end{remark}

Finally, in Figure \ref{fig:1D_weird} we show the behavior of the global rounding error as a function of $\lambda=\{2^{(l+5)}+5\}_{l=0}^{l=7}$, for fixed $h=2^{-7}$ with BE in 1D. The motivation behind this experiment is that we expect the errors to worsen as the condition number of the BE linear system (which is $O(\lambda)$ in this case) grows, as mentioned in Remark \ref{rem:conditioning_lambda_blowup}. Indeed this is the case: when $\lambda\approx u^{-1}$ the rounding error starts increasing dramatically. This behavior worsens in higher dimensions where the condition number is larger and more operations are needed by the linear solver (not shown). These results suggest that $\lambda$, and consequently $\Delta t$, must be kept small even for implicit timestepping, thus restricting what is perhaps the greatest advantage of implicit methods. We suggest that a mixed-precision treatment \cite[]{abdelfattah2021survey} would be extremely beneficial here.

\begin{figure}[h!]
	\vspace{-6pt}
	\centering
	\includegraphics[width=0.9\textwidth]{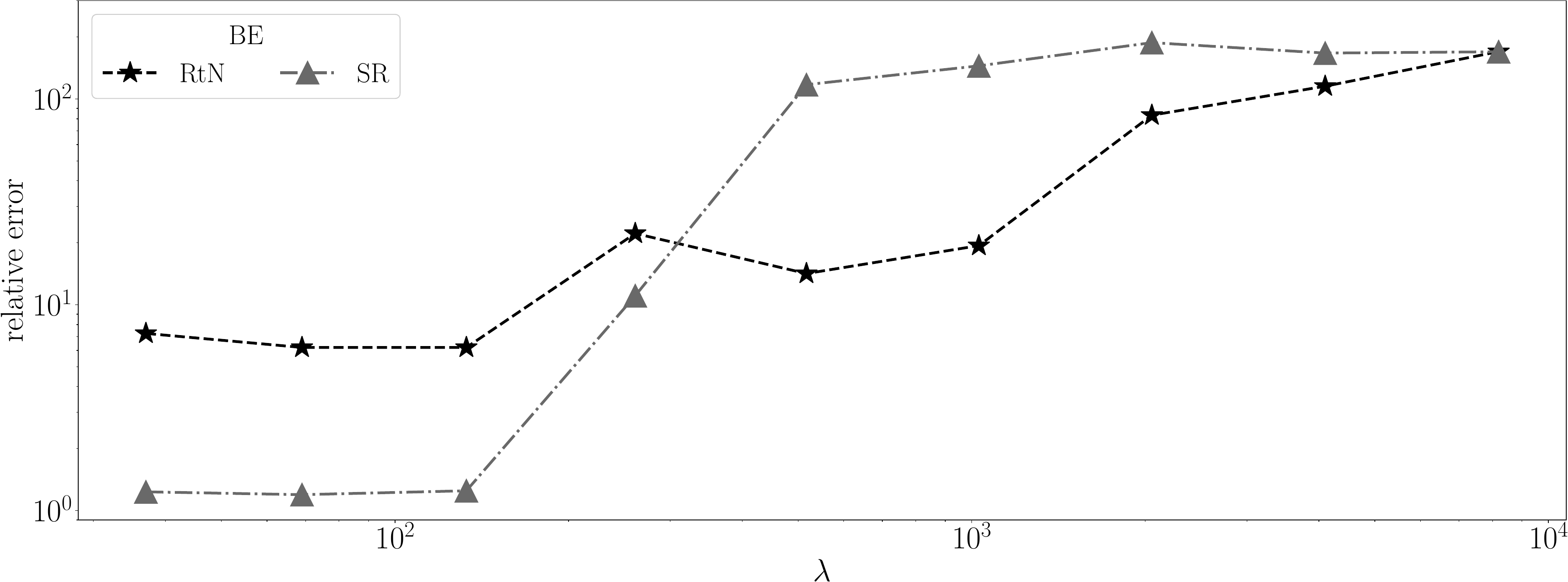}
	\caption{\textit{Plot of the relative global rounding error in the $\infty$ norm for BE in 1D as a function of $\lambda$ for fixed $h=2^{-7}$. The global error increases drastically as the conditioning of the BE linear system increases beyond roughly $u^{-1}$.}}
	\label{fig:1D_weird}
	\vspace{-24pt}
\end{figure}

%

\section{Conclusions}
\label{sec:conclusions}
We have investigated the behavior of local and global rounding errors arising from the solution of the heat equation in low precision using both round-to-nearest and stochastic rounding. We showed that a careful implementation of the scheme in delta form is essential to reduce rounding errors, and we explained why round-to-nearest computations are always affected by stagnation as $\Delta t$ decreases. For this rounding mode, the global rounding error grows like $O(u\Delta t^{-1})$ until stagnation occurs and the discrete solution converges to the initial condition for $\Delta t$ small enough. On the other hand, we showed that stochastic rounding successfully prevents stagnation and always approximates the exact solution. Furthermore, the dominant local error terms arising from stochastic rounding have favorable statistical properties, yielding a much milder global error growth rate, which is $O(u\Delta t^{-1/4})$ in 1D and roughly constant in higher dimensions. These results suggest that there might be little accuracy and performance benefit with using high-order RK schemes in reduced precision, and that a mixed-precision treatement could be more suitable for these methods.

We remark that the global error behavior described in this paper occurs regardless of the number of digits used. However, rounding errors of this type only grow appreciably when working in low precision. In this setting, our results show that stochastic rounding empowers the PDE solver with resilience to rounding errors, which enables the computation of (almost) machine-precision-accurate solutions.

While our theory and error bounds broadly also extend to linear parabolic PDEs in general (a paper on this subject is in preparation), it would be extremely interesting to analyse the effect of SR when solving other time-dipendent PDEs, especially hyperbolic or nonlinear problems. In this setting, stagnation might not necessarily occur, and SR might consequently not be as effective. We leave this investigation to future work.

\printbibliography

\begin{appendices}

\section{Proof of Theorem \ref{th:matrix_rational_function_bound}}
\label{sec:appA}
First, we need an auxiliary lemma:
\begin{lemma}
	\label{lemma:matrixpoly}
	Let $P(z)$ be a polynomial of degree $s$, let $\bm{v}\in\R^{(K-1)^d}$ and set $Y=P(-\Delta t A)$ and $\bm{y} = P(-\Delta t A)\bm{v}$. There exist constants $c_0,c_1,c_2>0$ such that ${\Vert\bm{y}\Vert_\infty \leq c_0(s,d,\lambda)\Vert\bm{v}\Vert_\infty}$ and, for $\ups$ sufficiently small, the computation of $Y$ and $\bm{y}$ in finite precision yields instead $\widehat{Y}$ and $\widehat{\bm{y}}$ satisfying
	\begin{align}
	\label{eq:lemma_appA_proposition}
	\Vert\widehat{Y}-Y\Vert_{\infty}\leq c_1(s,d,\lambda)\ups,\quad \Vert\widehat{\bm{y}}-\bm{y}\Vert_{\infty}\leq c_2(s,d,\lambda)\ups\Vert\bm{v}\Vert_\infty.
	\end{align}
	The second bound still holds, albeit with a slightly larger constant, in the case in which $\bm{v}$ is also computed inexactly yielding $\widehat{\bm{v}}$ with $\Vert\widehat{\bm{v}} - \bm{v}\Vert_\infty \leq c\ups\Vert\bm{v}\Vert_{\infty}$ for some $c>0$.
\end{lemma}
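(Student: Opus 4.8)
The plan is to reduce everything to two $K$-independent facts about the finite difference Laplacian and then invoke the standard floating-point analysis of sparse matrix--vector products and of Horner's scheme from \cite{higham2002accuracy}. First I would record that $||{-\Delta t A}||_\infty \le 4d\lambda$ (for the maximum-absolute-row-sum norm, which coincides with its entrywise-absolute-value version), independently of $K$, and that each row of $-\Delta t A$ has at most $2d+1$ nonzero entries, again independently of $K$. Writing $P(z)=\sum_{j=0}^s p_j z^j$ and $P(-\Delta t A)=\sum_{j=0}^s p_j(-\Delta t A)^j$, submultiplicativity and the triangle inequality immediately give the exact-arithmetic bound $||\bm{y}||_\infty \le c_0(s,d,\lambda)||\bm{v}||_\infty$ with $c_0(s,d,\lambda):=\sum_{j=0}^s|p_j|(4d\lambda)^j$, which is finite for any fixed $\lambda$.

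Next I would analyse the finite-precision evaluation of $\bm{y}=P(-\Delta t A)\bm{v}$ by Horner's scheme: set $\bm{w}\leftarrow p_s\bm{v}$ and iterate $\bm{w}\leftarrow(-\Delta t A)\bm{w}+p_j\bm{v}$ for $j=s-1,\dots,0$. Each product with the sparse matrix $-\Delta t A$ incurs, by the standard inner-product error analysis (Chapter~3 of \cite{higham2002accuracy}), an $\infty$-norm error at most $\gamma_{cd}\,||{-\Delta t A}||_\infty\,||\bm{w}||_\infty = 4d\lambda\,\gamma_{cd}\,||\bm{w}||_\infty$ for an absolute constant $c$, and the scalar multiplications and the vector addition contribute further $O(\ups)$ terms proportional to $||\bm{w}||_\infty$ and $||\bm{v}||_\infty$. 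Propagating these $s$ error injections through the recursion --- each one subsequently amplified by at most $||{-\Delta t A}||_\infty\le 4d\lambda$ per remaining step and weighed against the running bound $||\bm{w}||_\infty\le c_0||\bm{v}||_\infty$ --- yields $||\hat{\bm{y}}-\bm{y}||_\infty\le c_2(s,d,\lambda)\ups||\bm{v}||_\infty$, provided $\ups$ is small enough that the $\gamma_m$ with $m=O(sd)$ are meaningful. This is exactly the combination of Higham's analysis of Horner's method (Chapter~5 of \cite{higham2002accuracy}) with his sparse matvec bound. The bound for the matrix $\hat Y$ follows identically but in matrix arithmetic: for fixed $s$ and $d\le 3$ the intermediate powers $(-\Delta t A)^j$ still have a number of nonzeros per row bounded independently of $K$, so each matrix product's error is controlled by $\gamma$ times a product of $\infty$-norms bounded by powers of $4d\lambda$, and summing over the $s$ steps gives $||\hat Y-Y||_\infty\le c_1(s,d,\lambda)\ups$ (no $||\bm{v}||$ factor, since $||Y||_\infty\le c_0$ is itself a constant).

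For the last claim I would write $\hat{\bm{v}}=\bm{v}+\bm{\epsilon}_v$ with $||\bm{\epsilon}_v||_\infty\le c\ups||\bm{v}||_\infty$, so that $P(-\Delta t A)\hat{\bm{v}}=\bm{y}+P(-\Delta t A)\bm{\epsilon}_v$ with $||P(-\Delta t A)\bm{\epsilon}_v||_\infty\le c_0c\ups||\bm{v}||_\infty$, while the Horner evaluation error is now weighed against $||\hat{\bm{v}}||_\infty\le(1+c\ups)||\bm{v}||_\infty\le 2||\bm{v}||_\infty$ for $\ups$ small; both contributions are $O(\ups||\bm{v}||_\infty)$ and are absorbed into a slightly larger $c_2$. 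The only genuine work is the bookkeeping of the Horner error recursion, and the one point demanding care is verifying that \emph{no} constant secretly depends on $K$ --- which is precisely where the two $K$-independent facts above enter, so the constants depend only on $s$, $d$, $\lambda$ and the coefficients of $P$, as asserted.
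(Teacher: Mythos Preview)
Your proposal is correct and follows essentially the same approach as the paper: both arguments rest on $\|{-\Delta t A}\|_\infty=4d\lambda$, the $K$-independent row sparsity of $-\Delta t A$, and the standard Horner/polynomial-of-a-matrix error analysis from \cite{higham2002accuracy} (the paper cites this as Theorem~4.5 in \cite{higham2008functions}), together with the same triangle-inequality treatment of the inexact $\bm v$ case. One cosmetic remark: in your matrix-case argument it is the sparsity of $B=-\Delta t A$ itself (each Horner step multiplies by $B$) that keeps the $\gamma_m$ constant $K$-free, not the sparsity of its powers; your conclusion is unaffected.
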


\begin{proof}
	Let $P(x)=\sum_{q=0}^sa_qx^q$ and, for simplicity of notation, let $B=-\Delta t A$, where $B$ is sparse with at most $m=3d$ nonzero entries per row. From \eqref{eq:main_eqn_FD} it is easy to see that $\Vert B\Vert_\infty=4d\lambda$. The first bound is obtained by noting that since $\bm{y}=P(B)\bm{v}$ we have
	\begin{align*}
	\Vert\bm{y}\Vert_\infty \leq \sum_{q=0}^s|a_q\Vert|B\Vert_\infty^q\Vert\bm{v}\Vert_\infty \leq c_0\Vert\bm{v}\Vert_\infty,\quad\text{where}\quad c_0=\max_q|a_q|\dfrac{(4d\lambda)^{s+1}-1}{4d\lambda - 1}.
	\end{align*}
	Now, if the evaluation of $P(B)$ is performed via explicit powers, Horner's method or the Paterson-Stockmeyer method (cf.~Section 4.2 in \cite{higham2008functions}), then Theorem 4.5 in \cite{higham2008functions} (see Section 5 of \cite{higham2002accuracy} for a proof) gives, for some $\bar{c}>0$ depending on $s$ and $m$,
	\begin{align*}
	\Vert\widehat{Y}-Y\Vert_{\infty}\leq \bar{c}\ups\sum_{q=0}^s|a_q\Vert|B\Vert_\infty^q\leq c_1(s,d,\lambda)\ups,
	\end{align*}
	where $c_1(s,d,\lambda)=\bar{c}c_0$. Note that $\bar{c}$ here does not depend on the size of $B$ since $B$ is sparse. When $\bm{v}$ is computed exactly, the other bound in \eqref{eq:lemma_appA_proposition} is obtained with the same argument by using the classical error bound for sparse matrix-vector multiplication (see Section 2 in \cite{higham2002accuracy}):
	\begin{align*}
	\widehat{B\bm{v}}=(B+\Delta B)\bm{v},\quad\text{with}\quad |\Delta B|\leq \gamma_m|B|.
	\end{align*}
	When $\bm{v}$ is evaluated inexactly we can apply the bound for the exact vector evaluation to $\widehat{\bm{v}}$, giving
	\begin{align*}
	\Vert\widehat{\bm{y}}-\bm{y}\Vert_{\infty} \leq c_2\ups\Vert\widehat{\bm{v}}\Vert_\infty \leq c_2\ups(1+c\ups)\Vert\bm{v}\Vert_\infty,
	\end{align*}
	and the proposition follows after renaming the constant in the bound.
\end{proof}

We can now prove Theorem \ref{th:matrix_rational_function_bound}.

\begin{proof}[Proof of Theorem \ref{th:matrix_rational_function_bound}]
	Lemma \ref{lemma:matrixpoly} already gives the result for explicit RK methods. For implicit methods, let $B=-\Delta t A$. We evaluate $\bm{z}=P(B)\bm{v}$ by solving the linear system $W(B)\bm{z}=N(B)\bm{v}=\bm{y}$. Since $W(x)$ has no roots on the negative real axis and $B$ only has negative eigenvalues, $W(B)$ is invertible and there exists $\alpha>0$ such that $\Vert W(B)^{-1}\Vert_\infty\leq \alpha(s,d,\lambda,K)$. Thanks to Lemma \ref{lemma:matrixpoly} we then obtain $\Vert\bm{z}\Vert_\infty\leq \alpha c_0\Vert\bm{v}\Vert_\infty$, which is the first preposition. Now, due to Lemma \ref{lemma:matrixpoly}, the evaluation of $\bm{z}$ in finite precision yields
	\begin{align*}
	(W(B)+\Delta W)\widehat{\bm{z}}=\bm{y}+\Delta \bm{y},\qquad \Vert\Delta W\Vert_\infty\leq c_1\ups,\quad \Vert\Delta \bm{y}\Vert_\infty \leq c_2\ups\Vert\bm{v}\Vert_\infty
	\end{align*}
	where $\widehat{\bm{z}}$ comes from the finite-precision solution of the perturbed linear system. Since the system is solved by a stable method by assumption, we can combine the first preposition of the theorem with Theorem 7.2 in \cite{higham2002accuracy} to obtain, for some constant $\bar{c}>0$,
	\begin{align*}
	\Vert\widehat{\bm{z}}-\bm{z}\Vert_\infty \leq \bar{c}(s,d,\lambda,K)\ups \left(c_1\alpha + c_2\alpha^2c_0\right) \Vert\bm{v}\Vert_\infty=C(s,d,\lambda,K)\nu\Vert\bm{v}\Vert_\infty.
	\end{align*}
	This is the proposition that we wanted to prove. When $\bm{v}$ is evaluated inexactly we can apply the bound for the exact vector evaluation to $\widehat{\bm{v}}$ and then use the triangle inequality in the same way as in the proof of Lemma \ref{lemma:matrixpoly}.
\end{proof}

\section{Proof of Theorem \ref{thm:Sseriesbounds}}
\label{sec:appB}
We first need two auxiliary results:
\begin{lemma}
	\label{lemma:eigenbounds}
	The $s_{\bm{k}}$ and $\bm{v}^h_{\bm{k}}$ all satisfy
	\begin{align}
	-4d \lambda \leq s_{\bm{k}} &\leq -d\pi^2\Delta t\left(1-\frac{\pi^2}{12}\lambda^{-1}\Delta t\right) \leq 0,\\
	-\Delta t\pi^2\Vert\bm{k}\Vert_2^2 \leq s_{\bm{k}} &\leq  -\left(1-\frac{\pi^2}{12}\right)\Delta t\pi^2\Vert\bm{k}\Vert_2^2,
	\label{eq:s_k_bounds}
	\\
	0\leq (\bm{1},\bm{v}^h_{\bm{k}})&=\prod_{j=1}^d\cot\left(\frac{\pi \bm{k}_j}{2K}\right)(\bm{k}_j\hspace{-8pt}\mod 2)\leq 2^d\pi^{-d}K^d\prod_{j=1}^d\frac{1}{\bm{k}_j}(\bm{k}_j\hspace{-8pt}\mod 2).
	\label{eq:cotan_vk}
	\end{align}
	where the upper and lower bound of the first inequality are only attained in the limit as $\Delta t,h\rightarrow 0$ and strict inequalities hold otherwise.
\end{lemma}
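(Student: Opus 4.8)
The plan is to reduce every claim to elementary one-variable trigonometric inequalities applied to the explicit eigenpairs \eqref{eq:eigenpairsA}. Throughout I use that $\lambda^{-1}\Delta t=h^2=K^{-2}$ and rewrite $s_{\bm{k}}=-\Delta t\lambda_{\bm{k}}^h=-4\lambda\sum_{j=1}^{d}\sin^2\!\big(\tfrac{\pi}{2}\tfrac{\bm{k}_j}{K}\big)$, which is immediate from \eqref{eq:eigenpairsA} together with $\lambda=\Delta t K^2$. Since each $\bm{k}_j\in\{1,\dots,K-1\}$, the arguments $x_j:=\tfrac{\pi\bm{k}_j}{2K}$ all lie in $(0,\tfrac{\pi}{2})$.

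For the first two displays I would assemble the scalar facts $0\le\sin^2 x\le x^2$ (valid for all $x$) and $\sin x\ge x-\tfrac{x^3}{6}\ge 0$ on $[0,\sqrt 6]$, hence $\sin^2 x\ge x^2\big(1-\tfrac{x^2}{3}\big)$ there. The bound $-4d\lambda\le s_{\bm{k}}$ is then just $\sum_j\sin^2 x_j\le d$, with equality approached only as $\bm{k}_j/K\to 1$ while $K\to\infty$. For $s_{\bm{k}}\le-d\pi^2\Delta t\big(1-\tfrac{\pi^2}{12}\lambda^{-1}\Delta t\big)$ I minimize $\sum_j\sin^2 x_j$ by taking all $\bm{k}_j=1$ and use $\sin^2\tfrac{\pi}{2K}\ge\tfrac{\pi^2}{4K^2}\big(1-\tfrac{\pi^2}{12K^2}\big)$, then substitute $\lambda K^{-2}=\Delta t$ and $K^{-2}=\lambda^{-1}\Delta t$; the trailing $\le 0$ is just $h\le 1<\sqrt{12}/\pi$. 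The display \eqref{eq:s_k_bounds} is the same computation carried out termwise rather than after extremizing: $\sin^2 x_j\le x_j^2$ gives $s_{\bm{k}}\ge-4\lambda\sum_j x_j^2=-\Delta t\pi^2||\bm{k}||_2^2$, while $\sin^2 x_j\ge x_j^2\big(1-\tfrac{x_j^2}{3}\big)\ge x_j^2\big(1-\tfrac{\pi^2}{12}\big)$ (using $x_j^2<\tfrac{\pi^2}{4}$) yields the matching upper bound $s_{\bm{k}}\le-\big(1-\tfrac{\pi^2}{12}\big)\Delta t\pi^2||\bm{k}||_2^2$.

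For \eqref{eq:cotan_vk} the crucial structural observation is that $\bm{v}^h_{\bm{k}}$ is a tensor product, so $(\bm{1},\bm{v}^h_{\bm{k}})=\prod_{j=1}^{d}\big(\sum_{m=1}^{K-1}\sin(\tfrac{\pi m\bm{k}_j}{K})\big)$ and it suffices to evaluate the scalar sum $\Sigma_K(k):=\sum_{m=1}^{K-1}\sin(\tfrac{\pi m k}{K})$ for integer $1\le k\le K-1$. Using the closed form $\sum_{m=1}^{K-1}\sin(m\theta)=\sin\tfrac{(K-1)\theta}{2}\,\sin\tfrac{K\theta}{2}\big/\sin\tfrac{\theta}{2}$ with $\theta=\tfrac{\pi k}{K}$, one has $\sin\tfrac{K\theta}{2}=\sin\tfrac{\pi k}{2}$, which vanishes for even $k$ and equals $\pm 1$ for odd $k$; and for odd $k$ the identity $\cos\tfrac{\pi k}{2}=0$ forces $\sin\tfrac{(K-1)\theta}{2}=\sin\tfrac{\pi k}{2}\cos\tfrac{\pi k}{2K}$, so the $\sin^2\tfrac{\pi k}{2}=1$ factors cancel and $\Sigma_K(k)=\cot\tfrac{\pi k}{2K}$. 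Writing this uniformly as $\cot(\tfrac{\pi k}{2K})(k\bmod 2)$ and multiplying over $j$ gives the stated equality; non-negativity is immediate since $\tfrac{\pi\bm{k}_j}{2K}\in(0,\tfrac{\pi}{2})$ makes each $\cot$ factor positive, and the upper bound follows factorwise from $\cot x<\tfrac{1}{x}$ on $(0,\tfrac{\pi}{2})$ (equivalently $\tan x>x$) and then multiplying the non-negative bounds.

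The only step that is not pure bookkeeping is obtaining the closed form for $\Sigma_K(k)$ and handling the even/odd split correctly; I would derive it either from the Dirichlet-kernel identity above or by summing a finite geometric series and taking imaginary parts, whichever is cleaner, and then check the two cases by hand. Everything else is a routine combination of the scalar inequalities recorded above.
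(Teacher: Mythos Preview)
Your proposal is correct and follows essentially the same route as the paper: the paper also derives the $s_{\bm{k}}$ bounds from the scalar inequality $x^2-x^4/3\le\sin^2 x\le\min(1,x^2)$ applied to \eqref{eq:eigenpairsA}, obtains the product formula for $(\bm{1},\bm{v}^h_{\bm{k}})$ via the tensor-product structure and the closed-form sum of sines (what it calls the Lagrange trigonometric identity), and then bounds $\cot x\le 1/x$. Your write-up simply spells out the odd/even split and the substitution $\lambda K^{-2}=\Delta t$ in more detail than the paper does.
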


\begin{proof}
	The relations involving the $s_{\bm{k}}$ terms are obtained by using the inequality $x^2 - x^4/3 \leq \sin(x)^2 \leq \min(1,x^2)$, valid for all $x\in \R$, to \eqref{eq:eigenpairsA}. The definition of $\bm{v}^h_{\bm{k}}$ in \eqref{eq:eigenpairsA} gives
	\begin{align}
	(\bm{1},\bm{v}^h_{\bm{k}}) = \prod\limits_{j=1}^d\left(\sum\limits_{\bi_j=1}^{K-1}\sin\left(\pi\bi_j\frac{\bm{k}_j}{K}\right)\right).
	\end{align}
	Applying Lagrange trigonometric identity to the series of sines yields the equality in \eqref{eq:cotan_vk}. The upper bound is obtained by using the relation $\cot(x)\leq 1/x$, valid for all $x\in(0,\pi)$.
\end{proof}

\begin{lemma}
	\label{lemma:bounds_series_oneovernormsquared}
	The following bounds hold for $K\geq 1$ and $\bm{k}\in\N^d$:
	\begin{align}
	\label{eq:lemmaboundsseries1}
	\sum\limits_{\bm{k}=\bm{1}}^{\bm{k}=K\bm{1}}\frac{1}{\Vert\bm{k}\Vert_2^2}&\leq\left\{
	\begin{array}{lr}
	\frac{\pi^2}{6}, & d=1,\\
	\frac{\pi}{2}\log(\sqrt{2}K) + \frac{1}{2}, & d=2,\\
	\frac{\pi\sqrt{3}}{2}K -\frac{\pi}{2} + \frac{1}{3},&d=3,
	\end{array}
	\right.\\
	\sum\limits_{\bm{k}=\bm{1}}^{\bm{k}=K\bm{1}}\frac{1}{\Vert\bm{k}\Vert_2^4}&\leq \left\{
	\begin{array}{lr}
	\frac{\pi^4}{90}, & d=1,\\
	\frac{\pi}{4} + \frac{1}{4}, & d=2,\\
	\frac{\pi}{2} + \frac{1}{9}, & d=3.
	\end{array}
	\right.
	\label{eq:lemmaboundsseries2}
	\end{align}
\end{lemma}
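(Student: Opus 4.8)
The plan is to dominate each summand by an integral over a unit cube and then estimate that integral in (hyper)spherical coordinates. The one-dimensional bounds are immediate: all summands are positive, so $\sum_{k=1}^{K}k^{-2}\leq\sum_{k=1}^{\infty}k^{-2}=\zeta(2)=\pi^2/6$ and $\sum_{k=1}^{K}k^{-4}\leq\zeta(4)=\pi^4/90$. It therefore remains to handle $d\in\{2,3\}$, which I treat uniformly for the two exponents $p\in\{1,2\}$ appearing in $||\bm{k}||_2^{-2}$ and $||\bm{k}||_2^{-4}$.

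For $d\in\{2,3\}$ and $\bm{1}\leq\bm{k}\leq K\bm{1}$, associate the unit cube $S_{\bm{k}}=\prod_{j=1}^{d}[\bm{k}_j-1,\bm{k}_j]$. These cubes are pairwise disjoint up to measure-zero faces, and their union is $[0,K]^d$. Since every $\bm{x}\in S_{\bm{k}}$ satisfies $0\leq x_j\leq\bm{k}_j$, we have $||\bm{x}||_2\leq||\bm{k}||_2$ and hence $||\bm{k}||_2^{-2p}\leq||\bm{x}||_2^{-2p}$ throughout $S_{\bm{k}}$; integrating over the unit-volume cube gives $||\bm{k}||_2^{-2p}\leq\int_{S_{\bm{k}}}||\bm{x}||_2^{-2p}\,d\bm{x}$, provided the right-hand side is finite. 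This proviso holds for every $\bm{k}\neq\bm{1}$: then some $\bm{k}_j\geq 2$, so $||\bm{x}||_2\geq\bm{k}_j-1\geq 1$ on $S_{\bm{k}}$ and the integrand stays bounded there (the cube may touch a coordinate hyperplane but never the origin). The single index $\bm{k}=\bm{1}$ is dealt with directly, contributing $d^{-1}$ to the first sum and $d^{-2}$ to the second.

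Summing over $\bm{k}\neq\bm{1}$ and using disjointness, the $\bm{k}\neq\bm{1}$ part of either sum is at most $\int_{\Omega}||\bm{x}||_2^{-2p}\,d\bm{x}$ with $\Omega=[0,K]^d\setminus[0,1]^d$. Because $[0,K]^d$ is contained in the ball of radius $\sqrt{d}K$ while $[0,1]^d$ contains the unit ball, $\Omega$ lies inside the shell sector $\{\bm{x}\in(0,\infty)^d:1\leq||\bm{x}||_2\leq\sqrt{d}K\}$. In (hyper)spherical coordinates the positive orthant subtends a fraction $2^{-d}$ of the full sphere, which equals arc length $\pi/2$ for $d=2$ and solid angle $4\pi/8=\pi/2$ for $d=3$, so in both cases
\begin{align*}
\sum_{\bm{k}\neq\bm{1}}||\bm{k}||_2^{-2p}\;\leq\;\frac{\pi}{2}\int_{1}^{\sqrt{d}K}r^{\,d-1-2p}\,dr .
\end{align*}
Evaluating this elementary integral in each of the four cases $(d,p)\in\{2,3\}\times\{1,2\}$ (for $p=2$ one may replace the upper limit by $\infty$) and adding back the $\bm{k}=\bm{1}$ contribution yields exactly the constants claimed in \eqref{eq:lemmaboundsseries1} and \eqref{eq:lemmaboundsseries2}; in particular the $d=2$, $p=1$ case produces the term $\frac{\pi}{2}\log(\sqrt{2}K)$ and the $d=3$, $p=1$ case produces $\frac{\pi\sqrt{3}}{2}K-\frac{\pi}{2}$.

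There is no deep obstacle. The only points needing care are (i) checking that $\int_{S_{\bm{k}}}||\bm{x}||_2^{-2p}\,d\bm{x}<\infty$ for $\bm{k}\neq\bm{1}$, the sole failure being the separately-treated index $\bm{k}=\bm{1}$; and (ii) getting the radii in the containment $\Omega\subset\{1\leq||\bm{x}||_2\leq\sqrt{d}K\}\cap(0,\infty)^d$ exactly right, since it is these radii that make the logarithmic and linear terms match the stated bounds.
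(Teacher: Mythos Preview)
Your proof is correct and follows essentially the same approach as the paper: bound each summand by the integral of $||\bm{x}||_2^{-2p}$ over the unit cube $\prod_j[\bm{k}_j-1,\bm{k}_j]$, single out the $\bm{k}=\bm{1}$ term (the paper writes it as $f(\sqrt{d})$, you as $d^{-p}$), and then enclose the remaining region in the positive-orthant shell $\{1\leq||\bm{x}||_2\leq\sqrt{d}K\}$ to get the radial integral $\frac{\pi}{2}\int_1^{\sqrt{d}K}r^{d-1-2p}\,dr$. You are if anything a bit more explicit than the paper about why the integral is finite for $\bm{k}\neq\bm{1}$ and about the region containment, but the argument is the same.
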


\begin{proof}
	The 1D result is actually an equality and is well-known. Let $r=\Vert\bm{x}\Vert_2$. If $f(r)$ is strictly positive and monotonically decreasing on $[1,\infty)$, then, in 2D
	\begin{align*}
	f(\sqrt{i^2+j^2})&<\int_{i-1}^i\int_{j-1}^j f(\sqrt{x^2+y^2})\, \text{d} x \text{d} y,
	\\
	\Longrightarrow\quad\sum_{\bm{k}=\bm{1}}^{\bm{k}=K\bm{1}} f(\Vert\bm{k}\Vert) &\leq f(\sqrt{2}) + \int_1^{\sqrt{2}K} \frac{\pi}{2} r\, f(r)\, \text{d} r,
	\end{align*}
	while in 3D
	\begin{align*}
	f(\sqrt{i^2+j^2+k^2})&<\int_{i-1}^i\int_{j-1}^j\int_{k-1}^k f(\sqrt{x^2+y^2+z^2})\, \text{d} x \text{d} y \text{d} z,
	\\
	\Longrightarrow\quad\sum_{\bm{k}=\bm{1}}^{\bm{k}=K\bm{1}} f(\Vert\bm{k}\Vert) &\leq f(\sqrt{3}) + \int_1^{\sqrt{3}K} \frac{\pi}{2} r^2\, f(r)\, \text{d} r.
	\end{align*}
	The desired bounds then follow naturally using $f(r)\equiv r^{-2}$ for \eqref{eq:lemmaboundsseries1} and
	$f(r)\equiv r^{-4}$ for \eqref{eq:lemmaboundsseries2}.
\end{proof}

Finally, we can prove Theorem \ref{thm:Sseriesbounds}:

\begin{proof}
	We start by proving \eqref{eq:Sseriesbounds1}. Since $(1-x^2)^{-1}\leq (1-|x|)^{-1}$ for all $x$ we just need to prove the second inequality in \eqref{eq:Sseriesbounds1}. Since $S(z)=1+z+O(z^2)$ and $\left(\Lambda \cup \{-4 d \lambda\}\right) \subset \mathcal{A}$, there exist $s_0,\alpha_0>0$ such that
	\begin{align*}
	0 \leq S(z) \leq 1+\frac{z}{2},\ \text{ for }z\in(-s_0,0),\ \text{ and }\ \alpha_0 \leq 1-|S(z)| \leq 1,\ \text{ for } z\in[-4 d \lambda,-s_0].
	\end{align*}
	We can therefore split
	\begin{align}
	\label{eq:temp_1overonemS}
	\sum_{\bm{k}}\dfrac{1}{1-|S(s_{\bm{k}})|}=\sum\limits_{|s_{\bm{k}}|\leq s_0}\dfrac{1}{1-|S(s_{\bm{k}})|} + \sum\limits_{|s_{\bm{k}}|> s_0}\dfrac{1}{1-|S(s_{\bm{k}})|}\leq \sum\limits_{|s_{\bm{k}}|\leq s_0}\frac{2}{-s_{\bm{k}}} + \dfrac{(K-1)^d}{\alpha_0},
	\end{align}
	Due to \eqref{eq:s_k_bounds}, and since $-s_{\bm{k}}> 0$ for all $\bm{k}$, we now have
	\begin{align}
	\sum\limits_{|s_{\bm{k}}|\leq s_0}\frac{1}{-s_{\bm{k}}}\leq \frac{1}{c\Delta t}\sum_{\bm{k}}\frac{1}{\Vert\bm{k}\Vert_2^2},\quad\text{where}\quad c=\pi^2\left(1-\frac{\pi^2}{12}\right).
	\end{align}
	By combining the above with \eqref{eq:temp_1overonemS} and Lemma \ref{lemma:bounds_series_oneovernormsquared} we obtain \eqref{eq:Sseriesbounds1} since
	\begin{align*}
	\sum_{\bm{k}}\dfrac{1}{1-|S(s_{\bm{k}})|}&\leq \frac{2 \Delta t^{-1}}{c}\sum_{\bm{k}}\frac{1}{\Vert\bm{k}\Vert_2^2}+\frac{(K-1)^d}{\alpha_0}\leq \phi_d(\Delta t).
	\end{align*}
	To prove \eqref{eq:Sseriesbounds2} the reasoning is the same yielding this time,
	\begin{align}
	\sum_{\bm{k}}\dfrac{1}{(1-|S(s_{\bm{k}})|)^2}\leq \frac{4}{c^2\Delta t^2}\sum_{\bm{k}}\frac{1}{\Vert\bm{k}\Vert_2^4} + \dfrac{(K-1)^d}{\alpha_0^2}\leq \bar{C}(d,\lambda)^2\Delta t^{-2},
	\end{align}
	cf.~Lemma \ref{lemma:bounds_series_oneovernormsquared}. To prove \eqref{eq:Sseriesbounds3}, we also follow the same strategy, giving
	\begin{align}
	\label{eq:temp_theorem_vkS}
	K^{-d}\sum_{\bm{k}}\dfrac{(\bm{1},\bm{v}^h_{\bm{k}})}{1-|S(s_{\bm{k}})|}\leq \frac{K^{-d}}{c\Delta t}\sum_{\bm{k}}\frac{(\bm{1},\bm{v}^h_{\bm{k}})}{\Vert\bm{k}\Vert_2^2} + \frac{K^{-d}}{\alpha_0}\sum_{\bm{k}}(\bm{1},\bm{v}^h_{\bm{k}}).
	\end{align}
	By the Cauchy-Schwarz inequality, Lemma \ref{lemma:eigenbounds} and Lemma \ref{lemma:bounds_series_oneovernormsquared} we have
	\begin{align*}
	\sum_{\bm{k}}\frac{(\bm{1},\bm{v}^h_{\bm{k}})}{\Vert\bm{k}\Vert_2^2} \leq \left(\sum_{\bm{k}}(\bm{1},\bm{v}^h_{\bm{k}})^2\right)^{1/2}\left(\sum_{\bm{k}}\frac{1}{\Vert\bm{k}\Vert_2^4}\right)^{1/2} &\leq 2^d\pi^{-d}K^d\left(\sum_{n\text{ odd}}\frac{1}{n^2}\right)^{d/2}\hspace{-9pt}\sqrt{\tilde{c}(d)}
	\\
	& < 2^{-d/2}K^d\sqrt{\tilde{c}(d)},
	\end{align*}
	where we have used the fact that the sum of the inverse squares of all odd numbers is $\pi^2/8$.
	For the second summation in \eqref{eq:temp_theorem_vkS} we get owing to Lemma \ref{lemma:eigenbounds},
	\begin{align*}
	\sum_{\bm{k}}(\bm{1},\bm{v}^h_{\bm{k}}) \leq 2^d\pi^{-d}K^d\left(\sum_{n\text{ odd}}\frac{1}{n}\right)^d\leq \pi^{-d}K^d\left(\log(K-1)+2\right)^d,
	\end{align*}
	where we used the fact that the harmonic series over the first $K-1$ odd numbers is bounded by ${\log(K-1)/2+1}$. The two bounds for the summations and \eqref{eq:temp_theorem_vkS} give
	\begin{align*}
	K^{-d}\sum_{\bm{k}}\dfrac{(\bm{1},\bm{v}^h_{\bm{k}})}{1-|S(s_{\bm{k}})|}\leq 2^{-d/2}c^{-1}\sqrt{\tilde{c}}\Delta t^{-1} + \pi^{-d}(\log(K-1)+2)^d \leq \tilde{C}(d,\lambda)\Delta t^{-1},
	\end{align*}
	which is \eqref{eq:Sseriesbounds3} and concludes the proof.
\end{proof}
\end{appendices}
\end{document}